\documentclass{article}

\usepackage{amsmath,amsfonts,bm,amsthm}
\usepackage{amssymb,enumitem,mathtools,mathrsfs}
\usepackage[colorlinks=true, linkcolor=blue, citecolor=blue, urlcolor=blue]{hyperref}
\hypersetup{pdftitle={Graphs with three eigenvalues from group divisible designs},
  pdfauthor={Gary Greaves and Jose Yip}}
\usepackage{booktabs}
\usepackage{geometry}
\usepackage{graphicx}
\makeatletter
\newcommand*{\transpose}{%
  {\mathpalette\@transpose{}}%
}
\newcommand*{\@transpose}[2]{%
  \raisebox{\depth}{$\m@th#1\intercal$}%
}
\makeatother

\theoremstyle{plain}
\newtheorem{theorem}{Theorem}[section]
\newtheorem{proposition}[theorem]{Proposition}
\newtheorem{lemma}[theorem]{Lemma}
\newtheorem{corollary}[theorem]{Corollary}

\theoremstyle{definition}

\theoremstyle{remark}
\newtheorem{remark}[theorem]{Remark}

\newcommand{\FF}{\mathbb{F}}
\newcommand{\ZZ}{\mathbb{Z}}

\newcommand{\RR}{\mathbb{R}}
\newcommand{\Spec}{\operatorname{Spec}}
\newcommand{\crk}{\operatorname{cr}}
\newcommand{\tr}{\operatorname{tr}}
\newcommand{\rank}{\operatorname{rank}}

\makeatletter
\def\bbordermatrix#1{\begingroup \m@th
  \@tempdima 4.75\p@
  \setbox\z@\vbox{%
    \def\cr{\crcr\noalign{\kern2\p@\global\let\cr\endline}}%
    \ialign{$##$\hfil\kern2\p@\kern\@tempdima&\thinspace\hfil$##$\hfil
      &&\quad\hfil$##$\hfil\crcr
      \omit\strut\hfil\crcr\noalign{\kern-\baselineskip}%
      #1\crcr\omit\strut\cr}}%
  \setbox\tw@\vbox{\unvcopy\z@\global\setbox\@ne\lastbox}%
  \setbox\tw@\hbox{\unhbox\@ne\unskip\global\setbox\@ne\lastbox}%
  \setbox\tw@\hbox{$\kern\wd\@ne\kern-\@tempdima\left[\kern-\wd\@ne
    \global\setbox\@ne\vbox{\box\@ne\kern2\p@}%
    \vcenter{\kern-\ht\@ne\unvbox\z@\kern-\baselineskip}\,\right]$}%
  \null\;\vbox{\kern\ht\@ne\box\tw@}\endgroup}
\makeatother

\title{Graphs with three eigenvalues from group divisible designs}
\author{Gary Greaves\thanks{Division of Mathematical Sciences,
School of Physical and Mathematical Sciences, Nanyang Technological University,
21 Nanyang Link, Singapore 637371.
Email: \href{mailto:gary@ntu.edu.sg}{\texttt{gary@ntu.edu.sg}}.}
\and
Jose Yip\thanks{Division of Mathematical Sciences,
School of Physical and Mathematical Sciences, Nanyang Technological University,
21 Nanyang Link, Singapore 637371.
Email: \href{mailto:josezhengho.yip@ntu.edu.sg}{\texttt{josezhengho.yip@ntu.edu.sg}}.}}
\date{}

\begin{document}

\maketitle

\begin{abstract}
We construct biregular graphs with three distinct eigenvalues from semiregular group divisible designs.  
For every prime power $q$, we use the spectral structure of Muzychuk's Construction~A on $q+1$ copies of $\operatorname{AG}(2,q^2)$ to produce suitable group divisible designs.
The resulting biregular graphs have $2q^5+q^4+q^2$ vertices and distinct eigenvalues $q^5+q^3+q^2$, $q^2$, and $-q^2$.
Their coherent closures have rank $12$ and type $\left[\begin{smallmatrix}3&2\\2&5\end{smallmatrix}\right]$.  
We also determine the four-class symmetric association schemes that are the coherent closures of the auxiliary graphs in this family.
Computational searches provide further graphs on $81$ and $156$ vertices, using Muzychuk's Construction~B and a generalised quadrangle, respectively; their coherent closures have ranks $13$ and $10$.
The $156$-vertex example has a strongly regular design as its coherent closure, of type $\left[\begin{smallmatrix}3&2\\2&3\end{smallmatrix}\right]$, answering a question of the authors.
We also realise a $126$-vertex example by switching a strongly regular graph.
\end{abstract}

%% ============================================================================
\section{Introduction}
\label{sec:intro}

All graphs in this paper are finite, undirected, and simple.  
We write $\mathscr G_3(\theta_0,\theta_1,\theta_2)$ for the family of connected graphs with distinct eigenvalues $\theta_0>\theta_1>\theta_2$, and write $\mathscr G_3$ when the eigenvalues are not specified.  
Every graph in $\mathscr G_3$ has diameter $2$.  
The regular graphs in $\mathscr G_3$ are precisely the connected strongly regular graphs, but much less is known in the non-regular case.

Bridges and Mena~\cite{BridgesMena1981} studied cones with three distinct eigenvalues, while Muzychuk and Klin~\cite{MUZYCHUK1998191} approached the problem through coherent algebras.  
Muzychuk and Klin~\cite{MUZYCHUK1998191} and Van Dam~\cite[Section~2.2]{VANDAM1998101} also constructed nonregular graphs with three eigenvalues using switching.
Van Dam~\cite{VANDAM1998101} classified the graphs in $\mathscr G_3$ on at most $29$ vertices and those whose smallest eigenvalue is at least $-2$, and gave further constructions.
Further examples and classifications appear in~\cite{ddC99,chuang,Cheng_2016,Cheng18}.

There are, however, strong restrictions in the biregular case.
Van Dam~\cite{VANDAM1998101} showed that a graph in $\mathscr G_3$ with at most three distinct degrees has an equitable degree partition; see Theorem~\ref{thm:equitableValency}.  
Cheng, Gavrilyuk, Greaves and Koolen~\cite{Cheng_2016} proved further arithmetic restrictions on the eigenvalues and quotient matrix; see Theorem~\ref{thm:bireg}.  
With respect to the degree partition, the adjacency matrix of a biregular graph has the form $\left [\begin{smallmatrix}A_1&B\\B^\transpose&A_2\end{smallmatrix} \right ]$.
After interchanging the degree classes if necessary, suppose that the class corresponding to $A_1$ induces $\overline{mK_n}$ or $mK_n$, with $m,n\geqslant2$.  
Cheng et al.~\cite[Theorem~5.1]{Cheng_2016} showed that $B$ is then the incidence matrix of a group divisible design; see Theorem~\ref{thm:2-des}.
Its intersection parameters are determined by the spectrum of the graph and the quotient matrix of the degree partition.  
If the design is semiregular in the sense of Bose and Connor~\cite{BoseConnor1952}, then $n=-\theta_2$ in the complete multipartite case and $n=\theta_1+1$ in the disjoint-union-of-cliques case.  
This result may be viewed as a degree-partitioned analogue of Higman's strongly regular designs~\cite{Higman1988}.
Related design constructions also occur in Rowlinson's work \cite{rowlinson}, which characterises equality cases in a minimum-degree bound using strongly regular graphs and symmetric or quasi-symmetric designs.
Greaves and Yip~\cite{GreavesYip2024} characterise the graphs with three eigenvalues and coherent rank $9$ in terms of quasi-symmetric designs.

For semiregular designs in the complete multipartite case, we prove a converse.  
Let $\mathcal D$ be a semiregular group divisible design with at least two groups and incidence matrix $B$.  
If a regular graph $\Gamma$ on the blocks of $\mathcal D$ satisfies two matrix identities involving $BA(\Gamma)$ and $A(\Gamma)^2+B^\transpose B$, together with a scalar compatibility condition, then $\mathcal D$ and $\Gamma$ determine a graph in $\mathscr G_3$; see
Theorem~\ref{thm:gdd-to-3ev}.  
We call $\Gamma$ an \emph{auxiliary graph} of $\mathcal D$.
The design parameters allow at most two possible degrees for an auxiliary graph and determine its spectrum, which is contained in $\{k_\Gamma,\theta_1+\theta_2,\theta_1,\theta_2\}$; see Lemma~\ref{lem:auxSpec}.

These restrictions make a parameter search practical.
Table~\ref{tab:feasible-parameters} contains the twelve parameter sets with $v\leqslant100$ and $\lambda_2\leqslant50$ that satisfy the required divisibility, integrality, and spectral conditions.
Five have $\lambda_1=0$ and hence arise from transversal designs, or equivalently orthogonal arrays of strength $2$.  
Three of these have empty auxiliary graph and recover examples of Van Dam~\cite{VANDAM1998101}.

The third row is realised by switching a strongly regular graph with parameters $(126,45,12,18)$ with respect to an induced $K_{9,9}$.
The resulting biregular graph has degrees $81$ and $51$ and spectrum $\{[57]^1,[3]^{89},[-9]^{36}\}$.  
Its complete multipartite degree class yields the corresponding GDD by Theorem~\ref{thm:2-des}; see Remark~\ref{rem:switch126}.

Our main result is a construction of an infinite family of graphs $\Gamma_q\in\mathscr G_3(q^5+q^3+q^2,q^2,-q^2)$ for each prime power $q$. 
The graph $\Gamma_q$ has $2q^5+q^4+q^2$ vertices, and the case $q=2$ realises the fifth row of Table~\ref{tab:feasible-parameters}; see Section~\ref{sec:design}.
For each $q$, we use the specialisation of Muzychuk's Construction~A~\cite{Muzychuk2007} on $q+1$ copies of $\operatorname{AG}(2,q^2)$ as the auxiliary graph.
The used parallel classes in each copy, their assignments to the other copies, and the line-matching bijections may all be chosen arbitrarily; every graph in this specialisation admits a compatible semiregular GDD.
Muzychuk's construction supplies the regular auxiliary graphs.
Our additional step constructs compatible incidence matrices uniformly over all these choices and verifies the mixed, quadratic, and scalar conditions of Theorem~\ref{thm:gdd-to-3ev}.
The resulting design has parameters
\[
(v,b;r,k;\lambda_1,\lambda_2;m,n)=\bigl(q^2(q^3+1),q^4(q+1);\ q^3(q+1),q(q^3+1);\
 q^3,q^2(q+1);\ q^3+1,q^2\bigr).
\]

The fourth row is realised using Muzychuk's Construction~B~\cite{Muzychuk2007} with six copies of $\operatorname{AG}(2,3)$ and base graph $K_{3,3}$.
A clique of compatible vectors in its $3$-eigenspace supplies a semiregular GDD with parameters $(27,54;18,9;0,6;9,3)$ and hence a biregular graph on $81$ vertices with spectrum $\{[33]^1,[6]^{23},[-3]^{57}\}$; see Theorem~\ref{thm:row4-muzychuk}.
Its nonprincipal quotient eigenvalue is $\theta_1$, whereas the infinite family has $\theta_2$.

For the seventh row, we use the point graph of a generalised quadrangle of order $(5,3)$, which is strongly regular with parameters $(96,20,4,4)$.
A search for successive incidence rows using binary linear constraints produces a compatible semiregular GDD with parameters $(60,96;48,30;16,24;15,4)$.
This yields a biregular graph on $156$ vertices with spectrum $\{[80]^1,[8]^{45},[-4]^{110}\}$; see Theorem~\ref{thm:quadrangle-graph}.

We also determine the association schemes and coherent closures of these constructions.  
The five relations on the block set of the infinite family form a
four-class symmetric association scheme.  
We determine its first eigenmatrix and show that its Bose--Mesner algebra is the coherent closure of the auxiliary graph.  
We also determine the type of the coherent closure of the biregular graphs $\Gamma_q$; see Section~\ref{sub:coherent-types}.
The coherent closure of the $156$-vertex graph is a strongly regular design in the sense of Higman, with type $\left[\begin{smallmatrix}3&2\\2&3\end{smallmatrix}\right]$.
This answers \cite[Question~8.5]{GreavesYip2024} in the affirmative.

The paper is organised as follows.

In the remainder of this section, we collect preliminary results and introduce notation.
In Section~\ref{sec:gdds}, we prove relations between biregular graphs with three distinct eigenvalues and GDDs.  
In Section~\ref{sec:multiplicative}, we derive the spectrum of the auxiliary graph and the GDD parameters in the case the corresponding biregular graph is multiplicative.
In Section~\ref{sec:design}, we provide an infinite family of multiplicative biregular graphs with three distinct eigenvalues.
In Sections~\ref{sec:row4-muzychuk} and~\ref{sec:quadrangle}, we provide the two further constructions using a computational search. 
Finally, in Section~\ref{sec:cr} we determine the emergent association schemes and coherent closures.

\subsection{Notation and preliminary results}
\label{sub:notation}\label{sec:3ev}

For background in algebraic graph theory, see Godsil and Royle~\cite{God01}.
We use $I_n$, $J_n$, $O_n$ and $\mathbf1_n$ for the identity matrix, all-ones matrix, zero matrix and all-ones column vector, respectively, omitting subscripts when their sizes are clear.
The symbols $\otimes$ and $\circ$ denote the Kronecker and entrywise products.  
In a spectrum, $[\theta]^m$ denotes an eigenvalue $\theta$ of multiplicity $m$.
For a graph $H=(\mathfrak X,E)$, we write $A(H)$ for its adjacency matrix, $\overline H$ for its complement and $H[U]$ for its induced subgraph on $U$.
We write $K_n$ for the complete graph and $mK_n$ for the disjoint union of $m$ copies of $K_n$; thus $\overline{mK_n}$ is complete multipartite with $m$ parts of size $n$.  
An empty graph has no edges.

The \textbf{adjacency algebra} of $H$ consists of the polynomials in $A(H)$, and its \textbf{Seidel matrix} is $S(H)=J-I-2A(H)$.
We write $\operatorname{PG}(d,q)$ and $\operatorname{AG}(d,q)$ for the projective and affine spaces of dimension $d$ over $\FF_q$, and $\langle x\rangle$ for the one-dimensional span of a nonzero vector $x$.
A graph is \textbf{regular}, \textbf{biregular} or \textbf{triregular} if it has one, two or three distinct degrees, respectively.

By the Perron--Frobenius theorem \cite[Theorem~2.2.1]{BrouwerHaemers2012}, the spectrum of $\Gamma\in\mathscr G_3(\theta_0,\theta_1,\theta_2)$ has the form
\[
    \Spec(A(\Gamma))=\left\{[\theta_0]^1,[\theta_1]^{m_1},[\theta_2]^{m_2}\right\},
\]
where $m_1+m_2=|\mathfrak X|-1$.

\begin{lemma}[{\cite[Section~2]{Cheng_2016}}]\label{lem:rank-one}
Let $A$ be the adjacency matrix of a graph in $\mathscr G_3(\theta_0,\theta_1,\theta_2)$.  
There is a positive $\theta_0$-eigenvector $\bm\alpha$ such that
\begin{equation}\label{eqn:3ev}
    A^2-(\theta_1+\theta_2)A+\theta_1\theta_2I
       =\bm\alpha\bm\alpha^\transpose.
\end{equation}
For each vertex $\mathsf x$ of degree $d_{\mathsf x}$, we have $\alpha_{\mathsf x}^2=d_{\mathsf x}+\theta_1\theta_2$.
Moreover, $\theta_1\geqslant0$ and  $\theta_2\leqslant-\sqrt2$.
\end{lemma}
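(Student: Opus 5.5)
The plan is to work with the matrix $M=A^2-(\theta_1+\theta_2)A+\theta_1\theta_2I=(A-\theta_1 I)(A-\theta_2 I)$. Since $\Gamma\in\mathscr G_3(\theta_0,\theta_1,\theta_2)$ and $A$ is real symmetric, it is orthogonally diagonalisable, so $M$ acts on each eigenspace as multiplication by $(\theta-\theta_1)(\theta-\theta_2)$. This factor vanishes on the $\theta_1$- and $\theta_2$-eigenspaces. On the $\theta_0$-eigenspace it equals $c=(\theta_0-\theta_1)(\theta_0-\theta_2)>0$.

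For the first two claims, the key input is that $\Gamma$ is connected, so Perron--Frobenius makes $\theta_0$ simple with a positive unit eigenvector $\mathbf u$. Hence $M=c\,\mathbf u\mathbf u^\transpose$, and setting $\bm\alpha=\sqrt c\,\mathbf u$ gives a positive $\theta_0$-eigenvector satisfying \eqref{eqn:3ev}. Reading off the diagonal entry at a vertex $\mathsf x$, we have $(A^2)_{\mathsf x\mathsf x}=d_{\mathsf x}$ and $A_{\mathsf x\mathsf x}=0$. This gives $\alpha_{\mathsf x}^2=d_{\mathsf x}+\theta_1\theta_2$.

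For the inequalities, I would argue as follows.

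\emph{$\theta_1\geqslant0$.} Since $\tr A=0$ and $\theta_0>0$ (a connected graph with an edge), some eigenvalue is negative, so $\theta_2<0$. If $\theta_1<0$, then $A$ would have exactly one nonnegative eigenvalue. Then $\Gamma$ is complete multipartite, since otherwise it contains an induced $K_1\cup K_2$, whose eigenvalues $1,0,-1$ interlace to force a second nonnegative eigenvalue. A complete multipartite graph has eigenvalue $0$ unless it is $K_n$, and $K_n$ has only two distinct eigenvalues. Both cases are contradictions.

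\emph{$\theta_2\leqslant-\sqrt2$.} Since $\Gamma$ is connected, not complete, and has diameter $2$, it contains an induced path $P_3$, whose smallest eigenvalue is $-\sqrt2$. Cauchy interlacing then gives $\theta_2\leqslant-\sqrt2$. Non-completeness holds because $K_n$ has only two distinct eigenvalues. The existence of an induced $P_3$ follows because any connected non-complete graph has two vertices at distance $2$.

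The only delicate step is the argument for $\theta_1\geqslant0$, and even that is routine via interlacing. Alternatively, one can note from $\alpha_{\mathsf x}^2>0$ that $\theta_1\theta_2>-d_{\mathsf x}$, but the interlacing route is cleaner.
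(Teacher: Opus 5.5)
Your proof is correct. The spectral decomposition of $(A-\theta_1I)(A-\theta_2I)$, combined with the simplicity and positivity of the Perron eigenvector, gives the rank-one identity and $\alpha_{\mathsf x}^2=d_{\mathsf x}+\theta_1\theta_2$. Interlacing with an induced $K_1\cup K_2$, and with an induced $P_3$, gives $\theta_1\geqslant0$ and $\theta_2\leqslant-\sqrt2$ respectively.

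The paper itself gives no proof of this lemma; it simply cites Cheng et al.\ for this standard fact, so there is nothing further to compare.

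The one thing to drop is your closing ``alternative''. The inequality $\theta_1\theta_2>-d_{\mathsf x}$ holds automatically when $\theta_1,\theta_2<0$, so it says nothing about the sign of $\theta_1$.
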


A \textbf{strongly regular graph} with parameters $(v,k,a,c)$ is a $k$-regular graph on $v$ vertices, where $0<k<v-1$, in which adjacent vertices have $a$ common neighbours and non-adjacent vertices have $c$ common neighbours.  
Its adjacency matrix satisfies $A^2=kI+aA+c(J-I-A)$.

Let $\pi=\{\pi_1,\dots,\pi_t\}$ be a partition of $V(\Gamma)$.
We call $\pi$ \textbf{equitable} if there are constants $n_{ij}$ such that every vertex in $\pi_i$ has $n_{ij}$ neighbours in $\pi_j$.  
The matrix $Q_\pi(\Gamma)=(n_{ij})$ is the \textbf{quotient matrix}.  
If $\pi$ is equitable, then $\Spec(Q_\pi(\Gamma))\subseteq\Spec(A(\Gamma))$; see \cite[Lemma~2.3.1]{BrouwerHaemers2012}.

The \textbf{degree partition} $\mathfrak D(\Gamma)$ has as its parts the sets of vertices of the same degree.  
We use the following result of Van Dam.

\begin{theorem}[{\cite[Section 4]{VANDAM1998101}}]
\label{thm:equitableValency}
    Let $\Gamma \in \mathscr{G}_3(\theta_0,\theta_1,\theta_2)$ have at most three distinct degrees. Then the degree partition $\mathfrak{D}(\Gamma)$ is equitable, and $\theta_0$ is an eigenvalue of the quotient matrix $Q_{\mathfrak{D}(\Gamma)}(\Gamma)$.
\end{theorem}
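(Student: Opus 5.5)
We will work from the identity~\eqref{eqn:3ev} of Lemma~\ref{lem:rank-one}, $A^2-(\theta_1+\theta_2)A+\theta_1\theta_2I=\bm\alpha\bm\alpha^\transpose$, with $\alpha_{\mathsf x}^2=d_{\mathsf x}+\theta_1\theta_2$. Since $\alpha_{\mathsf x}>0$, the value $\alpha_{\mathsf x}$ is determined by $d_{\mathsf x}$. Let $\delta_1,\dots,\delta_t$ ($t\leqslant3$) be the distinct degrees and $a_i=\sqrt{\delta_i+\theta_1\theta_2}$. These $a_i$ are distinct, so $\bm\alpha$ is constant on each degree class, taking the value $a_i$ on $\pi_i$. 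Write $\bm\alpha=\sum_i a_i\mathbf 1_{\pi_i}$.

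First, use that $A\bm\alpha=\theta_0\bm\alpha$. Row $\mathsf x\in\pi_i$ gives $\sum_j a_j n_j(\mathsf x)=\theta_0 a_i$, where $n_j(\mathsf x)$ is the number of neighbours of $\mathsf x$ in $\pi_j$. We also have $\sum_j n_j(\mathsf x)=\delta_i$, since $\mathsf x$ has degree $\delta_i$. Next, compare diagonal and row-sum information from \eqref{eqn:3ev}. Multiplying by $\mathbf 1$ gives
\[
A^2\mathbf 1-(\theta_1+\theta_2)A\mathbf 1+\theta_1\theta_2\mathbf 1=(\bm\alpha^\transpose\mathbf 1)\bm\alpha .
\]
Here $(A^2\mathbf 1)_{\mathsf x}=\sum_j \delta_j n_j(\mathsf x)$, so this yields a third linear relation in the $n_j(\mathsf x)$:
\[
\sum_j \delta_j n_j(\mathsf x)=(\theta_1+\theta_2)\delta_i-\theta_1\theta_2+a_i\,\bm\alpha^\transpose\mathbf 1 .
\]
The right-hand side depends only on $i$.

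So for each $\mathsf x\in\pi_i$, the vector $(n_1(\mathsf x),\dots,n_t(\mathsf x))$ satisfies three linear equations whose coefficient rows are $(1,\dots,1)$, $(a_1,\dots,a_t)$ and $(a_1^2,\dots,a_t^2)$. The third row is valid because $\delta_j=a_j^2-\theta_1\theta_2$, and subtracting $-\theta_1\theta_2$ times the first equation gives the $a_j^2$ row. All right-hand sides depend only on $i$. The coefficient matrix is a Vandermonde matrix in the distinct $a_j$ with $t\leqslant3$ columns, so it has full column rank $t$. Hence $n_j(\mathsf x)$ is uniquely determined by $i$, and the degree partition is equitable.

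Finally, set $Q=Q_{\mathfrak D(\Gamma)}(\Gamma)$ and $\mathbf a=(a_1,\dots,a_t)^\transpose$. The eigenvector relation $\sum_j n_{ij}a_j=\theta_0 a_i$ says exactly $Q\mathbf a=\theta_0\mathbf a$, with $\mathbf a\neq0$, so $\theta_0$ is an eigenvalue of $Q$. The only delicate point is checking that the row-sum equation really is independent of the other two, so that three equations are available. This is exactly where the hypothesis $t\leqslant3$ enters, since the Vandermonde argument fails with four or more degrees.
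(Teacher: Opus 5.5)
Your proof is correct: with at most three degrees, the relations coming from $A\mathbf 1$, $A\bm\alpha=\theta_0\bm\alpha$ and $A^2\mathbf 1$ have a Vandermonde coefficient matrix in the distinct positive values $a_j$, so every $n_j(\mathsf x)$ depends only on the class of $\mathsf x$, and $Q\mathbf a=\theta_0\mathbf a$ then follows directly. The paper does not prove this itself but cites Van Dam, and your argument is essentially the standard one behind that result: $\operatorname{span}\{\mathbf 1,\bm\alpha,A\mathbf 1\}$ is $A$-invariant, and when there are at most three degrees it coincides with the span of the indicator vectors of the degree classes.
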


Throughout, when $\Gamma \in \mathscr G_3(\theta_0,\theta_1,\theta_2)$ is biregular we write $\{V_1,V_2\}$ for its degree partition, $n_i \coloneqq |V_i|$, $k_i$ for the common degree of a vertex of $V_i$, and
\[
    Q_{\mathfrak D(\Gamma)}(\Gamma) = \begin{bmatrix} k_{11} & k_{12} \\ k_{21} & k_{22}\end{bmatrix}
\]
for the quotient matrix, so that $k_i = k_{i1}+k_{i2}$.  
By Lemma~\ref{lem:rank-one}, the Perron eigenvector $\bm\alpha$ of \eqref{eqn:3ev} is constant on each $V_i$, with value $\alpha_i = \sqrt{k_i + \theta_1\theta_2}$.

For non-bipartite biregular graphs, we require the following stronger restrictions.

\begin{theorem}[{\cite[Theorem 4.3]{Cheng_2016}}]
\label{thm:bireg}
    Let $\Gamma \in \mathscr{G}_3(\theta_0,\theta_1,\theta_2)$ be a non-bipartite biregular graph with degrees $k_1$ and $k_2$, and let $Q=Q_{\mathfrak D(\Gamma)}(\Gamma)$.  
    Then
	\begin{enumerate}
		\item[\textup{(i)}] All eigenvalues of $\Gamma$ are integers.
        \item[\textup{(ii)}] The quotient matrix $Q$ has eigenvalues $\theta_0$ and $\theta$, where $\theta \in \{\theta_1,\theta_2\}$.
		\item[\textup{(iii)}] Letting $\{\theta, \theta'\} = \{\theta_1, \theta_2\}$, we have $\alpha_1\alpha_2 = -\theta(\theta' +1)$.
        \item[\textup{(iv)}] If either part of the degree partition induces an empty subgraph, then $\theta = \theta_2$.
  \end{enumerate}
\end{theorem}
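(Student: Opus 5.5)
This is Theorem~\ref{thm:bireg}. I would work with the block form of \eqref{eqn:3ev} relative to the degree partition, using the equitable quotient matrix from Theorem~\ref{thm:equitableValency}. Let $A=\left[\begin{smallmatrix}A_1&B\\B^\transpose&A_2\end{smallmatrix}\right]$ and $\bm\alpha=(\alpha_1\mathbf1,\alpha_2\mathbf1)$. Put $\sigma=\theta_1+\theta_2$ and $\pi=\theta_1\theta_2$.

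\emph{Step 1: part (ii).} Multiply \eqref{eqn:3ev} on the right by the characteristic vectors of $V_1$ and $V_2$. Since $\pi$ is equitable, this gives the $2\times2$ identity
\[
Q^2-\sigma Q+\pi I=\begin{bmatrix}\alpha_1^2 n_1&\alpha_1\alpha_2 n_2\\ \alpha_1\alpha_2 n_1&\alpha_2^2 n_2\end{bmatrix},
\]
and the right-hand side has rank one. So $(x-\theta_1)(x-\theta_2)$ vanishes at one eigenvalue of $Q$. That eigenvalue is not $\theta_0$, because $\theta_0$ is an eigenvalue of $Q$ and $(\theta_0-\theta_1)(\theta_0-\theta_2)>0$. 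Hence the second eigenvalue $\theta$ of $Q$ lies in $\{\theta_1,\theta_2\}$.

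\emph{Step 2: part (iii).} Compare the off-diagonal entries of the identity in Step 1. These give $(Q^2-\sigma Q)_{12}=\alpha_1\alpha_2 n_2$, that is, $k_{12}(k_{11}+k_{22}-\sigma)=\alpha_1\alpha_2 n_2$. Use $\operatorname{tr}Q=\theta_0+\theta$, so $k_{11}+k_{22}-\sigma=\theta_0-\theta'$. Also use $k_{12}n_1=k_{21}n_2$ and the Perron relation for $(\alpha_1\mathbf1,\alpha_2\mathbf1)$ as a $\theta_0$-eigenvector: $k_{11}\alpha_1+k_{12}\alpha_2=\theta_0\alpha_1$, and similarly for the second row.

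Eliminating $\theta_0$ and the $n_i$ should leave $\alpha_1\alpha_2=-\theta(\theta'+1)$. A cleaner route is the diagonal-entry identity $\alpha_i^2=k_i+\pi$ from Lemma~\ref{lem:rank-one} combined with $\det(Q-\theta I)=0$. Write $\alpha_2/\alpha_1=(\theta_0-k_{11})/k_{12}$. Then $\alpha_1\alpha_2=\alpha_1^2(\theta_0-k_{11})/k_{12}$, and substituting $\alpha_1^2=k_{11}+k_{12}+\theta\theta'$ together with $(k_{11}-\theta)(k_{22}-\theta)=k_{12}k_{21}$ should reduce to the claimed formula. This algebraic elimination is the main obstacle. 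If it fails, I would try first to use the row sums of $A\bm\alpha=\theta_0\bm\alpha$ restricted to a vertex together with the vertex's neighbourhood count from \eqref{eqn:3ev}.

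\emph{Step 3: part (i).} Since $\Gamma$ is non-bipartite, I claim $\theta_1$ and $\theta_2$ are integers. Suppose they were irrational conjugates. Then $m_1=m_2$, since the characteristic polynomial has integer coefficients. Moreover $\theta$ is an eigenvalue of the integer matrix $Q$, so $\theta_0+\theta=\operatorname{tr}Q$ is an integer and $\theta$ is a quadratic irrational whose conjugate $\theta'$ is also an eigenvalue of $Q$. That would give $Q$ three eigenvalues, a contradiction. So $\theta_1,\theta_2$ are rational algebraic integers, hence integers. Then $\theta_0=\operatorname{tr}Q-\theta$ is an integer. I would check that the non-bipartite hypothesis is exactly what excludes the degenerate case $\theta_0=-\theta'$ arising in bipartite graphs.

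\emph{Step 4: part (iv).} Suppose $V_1$ induces an empty subgraph, so $k_{11}=0$. Then $\det Q=-k_{12}k_{21}<0$, so $\theta_0\theta<0$ and therefore $\theta<0$. Since $\theta_1\ge0$, this forces $\theta=\theta_2$. The case where $V_2$ is empty is symmetric.
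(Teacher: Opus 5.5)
Your Steps~1 and~4 are correct. Parts (iii) and (i), however, each have a genuine gap.

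\emph{Part (iii).} The elimination you postpone is the whole content of (iii), and the ingredients you list cannot produce it. You use $\alpha_1^2=k_1+\theta\theta'$, the Perron ratio $\alpha_2/\alpha_1=(\theta_0-k_{11})/k_{12}$ and $\det(Q-\theta I)=0$, but you never use $k_1\neq k_2$. The identity of Step~1 does not help either: beyond those facts it only adds the global normalisation $\|\bm\alpha\|^2=(\theta_0-\theta_1)(\theta_0-\theta_2)$.

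All of these relations also hold for an equitable $2$-partition of a strongly regular graph, where the conclusion fails. Take the $3\times3$ rook's graph $K_3\square K_3$, with spectrum $\{[4]^1,[1]^4,[-2]^4\}$. One row against the rest gives quotient $\left[\begin{smallmatrix}2&2\\1&3\end{smallmatrix}\right]$ with $\theta=1$, yet $\alpha_1\alpha_2=2\neq1=-\theta(\theta'+1)$.

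What is missing is the second diagonal identity $\alpha_2^2=k_2+\theta\theta'$, together with biregularity. Put $d=k_{22}-\theta=\theta_0-k_{11}$, so that $\alpha_2/\alpha_1=d/k_{12}$ and $(k_{11}-\theta)d=k_{12}k_{21}$. Substitute $\alpha_i^2=k_i+\theta\theta'$ into $\alpha_2^2k_{12}^2=\alpha_1^2d^2$ and clear denominators. This gives
\[
  (k_{12}-d)\bigl[(k_{11}-\theta+k_{12})\,d+\theta(\theta'+1)(k_{12}+d)\bigr]=0 .
\]
The first factor vanishes exactly when $\alpha_1=\alpha_2$, that is, when $k_1=k_2$, and this is where biregularity is used. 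The vanishing of the bracket is a rearrangement of $\alpha_1^2d=-\theta(\theta'+1)k_{12}$, which is $\alpha_1\alpha_2=-\theta(\theta'+1)$.

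\emph{Part (i).} Step~3 refutes only the case in which $\theta_1,\theta_2$ are conjugate irrationals, and then concludes that they are rational. This skips the case in which $\theta_0$ is irrational. Then its conjugate is the other root $\theta$ of the integral quadratic characteristic polynomial of $Q$ (your ``$\theta_0=-\theta'$'' names the wrong eigenvalue). Hence $\theta$ is simple, and $\theta'$ is an integer of multiplicity $n_1+n_2-2$.

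This case really occurs, for example in $K_{a,b}$ with $ab$ not a square. It is the only place where non-bipartiteness is needed, so it must be argued rather than deferred. One way:
\begin{itemize}
\item From $\tr A=0$ and $\theta_0+\theta=\tr Q$ we get $k_{11}+k_{22}=-(n_1+n_2-2)\theta'$.
\item If $\theta'=\theta_1\geqslant0$, then $k_{11}=k_{22}=0$, so $\Gamma$ is bipartite.
\item If $\theta'=\theta_2$, then $\theta_2\leqslant-2$ is an integer and every eigenvalue other than $\theta_0,\theta_1$ equals $\theta_2$. Cauchy interlacing then forces every three-vertex induced subgraph to have least eigenvalue $\theta_2$. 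This is impossible, since graphs on three vertices have least eigenvalue at least $-\sqrt2$.
\end{itemize}
Once this case is excluded, $\theta_0$ is an integer. Then $\theta_1,\theta_2$ are either integers or a conjugate pair, and your argument finishes.
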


%% ============================================================================
\section{Semiregular group divisible designs}
\label{sec:gdds}

\subsection{Group divisible designs}
\label{sec:gdd}

A set $\mathfrak X$ of cardinality $v$ together with a collection of $k$-subsets $\mathfrak B_1,\dots,\mathfrak B_b$ (called blocks) of $\mathfrak X$ is called a $(v,b,r,k)$-\textbf{configuration} if each point of $\mathfrak X$ occurs in precisely $r$ blocks.
A standard double counting argument reveals that $vr = bk$.

A $(v,b,r,k)$-configuration $\mathcal D$ is called a \textbf{group divisible design} (GDD) if its $v$ points can be partitioned into $m$ sets $\mathfrak T_1, \dots, \mathfrak T_m$, each of size $n \geqslant 2$, such that any two points $x \in \mathfrak T_i$ and $y \in \mathfrak T_j$ occur together in $\lambda_1$ blocks if $i=j$ and $\lambda_2$ blocks otherwise.
We say that $\mathcal D$ is a GDD with parameters $(v,b;r,k;\lambda_1,\lambda_2;m,n)$. 

Clearly, $v = mn$.  
Furthermore, using a standard double counting argument for the number of points $x \in \mathfrak X$ that appear in the same block as a fixed point $y \in \mathfrak X$ yields the identity
\begin{equation}
\label{eqn:gdd}
    r(k-1)=(n-1)\lambda_1+n(m-1)\lambda_2.
\end{equation}

The \textbf{incidence matrix} $B$ of $\mathcal D$ is defined as the $v \times b$ matrix $B$ where
\[
B_{x,k} = \begin{cases}
    1, & \text{ if } x \in \mathfrak B_k; \\
    0, & \text{ otherwise.}
\end{cases}
\]
Clearly, the rows and columns of an incidence matrix $B$ of $\mathcal D$ can be ordered such that
\begin{equation}
    \label{eqn:inc}
    B B^\transpose = rI + \lambda_1 I_m\otimes(J_n-I_n) + \lambda_2(J_m-I_m)\otimes J_n.
\end{equation}
It follows from \eqref{eqn:inc} and \eqref{eqn:gdd} that 
\begin{equation*}
    \Spec(B B^{\transpose}) = \left \{[r k ]^1,\ [rk - v\lambda_2]^{m-1},\ [r-\lambda_1]^{m(n-1)} \right \}.
\end{equation*}
Conversely, if a $v \times b$ $\{0,1\}$-matrix $B$ satisfies \eqref{eqn:inc} for some $\lambda_1, \lambda_2, m,n \in \mathbb Z$ and $B^\transpose \mathbf 1 = k \mathbf 1$ then we can deduce that $B$ is the incidence matrix of a GDD with parameters $(v,b;r,k;\lambda_1,\lambda_2;m,n)$.

Following Bose--Connor~\cite{BoseConnor1952}, a GDD is \textbf{singular} if $r=\lambda_1$, \textbf{semiregular} if $r>\lambda_1$ and $rk=v\lambda_2$, and \textbf{regular} if $r>\lambda_1$ and $rk>v\lambda_2$.
By \cite[Lemma 2.9.2]{BrouwerHaemers2012}, the matrices $B^\transpose B$ and $B B^\transpose$ share the same non-zero spectrum.
Hence, if $B$ is the incidence matrix of a semiregular GDD, then
\begin{equation}
\label{eqn:specbtb}
    \Spec(B^{\transpose}B) = \left \{[r k ]^1,\ [r-\lambda_1]^{m(n-1)}, [0]^{b-m(n-1)-1} \right \}.
\end{equation}

\subsection{Biregular graphs to GDDs}

We begin by extending a result of Cheng et al.~\cite[Theorem~5.1]{Cheng_2016} to the case of semiregular GDDs.  

\begin{theorem}\label{thm:2-des}
Let $\Gamma\in\mathscr G_3(\theta_0,\theta_1,\theta_2)$ be connected and biregular, with degree partition $\{V_1,V_2\}$, adjacency matrix $\left[\begin{smallmatrix}A_1&B\\B^\transpose&A_2\end{smallmatrix}\right]$, and degree-partition quotient matrix $\left[\begin{smallmatrix}k_{11}&k_{12}\\k_{21}&k_{22}\end{smallmatrix}\right]$.
Suppose that $n_1=mn$ for integers $m,n\geqslant2$, and that $\Gamma[V_1]=\overline{mK_n}$ or $\Gamma[V_1]=mK_n$.
Then $B$ is the incidence matrix of a GDD $\mathcal D$ with parameters $(n_1,n_2;k_{12},k_{21};\lambda_1,\lambda_2;m,n)$, where
\begin{equation}
\label{eqn:l1l2}
    (\lambda_1,\lambda_2)=
\begin{cases}
\bigl(k_{12}+\theta_1\theta_2,
      k_{12}+(\theta_1+1)(\theta_2+1)-1+n\bigr),
   &\text{if $\Gamma[V_1]=\overline{mK_n}$},\\[3pt]
\bigl(k_{12}+(\theta_1+1)(\theta_2+1),
      k_{11}+k_{12}+\theta_1\theta_2\bigr),
   &\text{if $\Gamma[V_1]=mK_n$}.
\end{cases}
\end{equation}
If $\mathcal D$ is semiregular, then both of the following conclusions hold:
\[
n=
\begin{cases}
-\theta_2,&\text{if $\Gamma[V_1]=\overline{mK_n}$},\\
\theta_1+1,&\text{if $\Gamma[V_1]=mK_n$},
\end{cases}
\]
and
\[
\Spec(A_2)\subseteq
\begin{cases}
\{k_{22},\theta_1,-n,\theta_1-n\},
    &\text{if $\Gamma[V_1]=\overline{mK_n}$},\\
\{k_{22},n-1,\theta_2,n+\theta_2\},
    &\text{if $\Gamma[V_1]=mK_n$}.
\end{cases}
\]
\end{theorem}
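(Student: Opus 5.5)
The plan is to read off the block equations of the rank-one identity \eqref{eqn:3ev} from Lemma~\ref{lem:rank-one}. Write $A=\left[\begin{smallmatrix}A_1&B\\B^\transpose&A_2\end{smallmatrix}\right]$ and $\bm\alpha=(\alpha_1\mathbf 1,\alpha_2\mathbf 1)$, with $\alpha_i^2=k_i+\theta_1\theta_2$. The $(1,1)$ block gives
\[
A_1^2+BB^\transpose-(\theta_1+\theta_2)A_1+\theta_1\theta_2I=\alpha_1^2J .
\]
The $(2,2)$ block gives $A_2^2+B^\transpose B-(\theta_1+\theta_2)A_2+\theta_1\theta_2I=\alpha_2^2J$. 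The $(1,2)$ block gives $A_1B+BA_2-(\theta_1+\theta_2)B=\alpha_1\alpha_2J$. By Theorem~\ref{thm:equitableValency}, $B$ has constant row sums $k_{12}$ and column sums $k_{21}$.

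\textbf{Step 1: $B$ is a GDD incidence matrix.} Order $V_1$ by the parts. In the case $A_1=I_m\otimes J_n-I$, we have $A_1^2=(n-2)A_1+(n-1)I$. The $(1,1)$ equation then expresses $BB^\transpose$ as a combination of $I$, $A_1$ and $J$, hence in the form \eqref{eqn:inc}. Reading the diagonal gives $r=k_{12}$. Within a part, $\lambda_1=\alpha_1^2-(n-2)+(\theta_1+\theta_2)$. Here $\alpha_1^2=k_1+\theta_1\theta_2$ and $k_1=k_{11}+k_{12}$ with $k_{11}=n-1$, and simplifying gives $\lambda_1=k_{12}+(\theta_1+1)(\theta_2+1)$. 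Between parts, $\lambda_2=\alpha_1^2=k_{11}+k_{12}+\theta_1\theta_2$. The complete multipartite case is identical, using $A_1=J-I_m\otimes J_n$ and $k_{11}=n(m-1)$. There $A_1^2=n(m-2)A_1+n(m-1)I+n(m-1)(J-I-A_1)$, and the same bookkeeping gives the first line of \eqref{eqn:l1l2}. The converse remark after \eqref{eqn:inc} then finishes this step, with $r=k_{12}$ and block size $k_{21}$. These are routine computations.

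\textbf{Step 2: semiregular case, determining $n$.} Semiregularity means $BB^\transpose$ has eigenvalue $rk-v\lambda_2=0$ on the $(m-1)$-dimensional space $W$ of vectors constant on parts and summing to zero. For $w\in W$, $Jw=0$ and $A_1w=\mu w$, where $\mu=n-1$ in the cliques case and $\mu=-n$ in the multipartite case. Apply the $(1,1)$ equation to $w$: since $B^\transpose w$ satisfies $BB^\transpose w=0$, we get $B^\transpose w=0$. Therefore $(\mu-\theta_1)(\mu-\theta_2)=0$, i.e.\ $\mu\in\{\theta_1,\theta_2\}$. In the cliques case $\mu=n-1\geqslant1>\theta_2$, so $n=\theta_1+1$. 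In the multipartite case $\mu=-n<0\leqslant\theta_1$, so $n=-\theta_2$.

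\textbf{Step 3: spectrum of $A_2$.} This is the main obstacle. Use the $(1,2)$ equation restricted to $\mathbf 1^\perp$, which reads $BA_2=((\theta_1+\theta_2)I-A_1)B$ there. Let $x\perp\mathbf 1$ be an eigenvector of $A_2$ with eigenvalue $\tau$. If $Bx\neq0$, then $Bx\perp\mathbf1$ is an eigenvector of $A_1$ with eigenvalue $\theta_1+\theta_2-\tau$. The eigenvalues of $A_1$ on $\mathbf 1^\perp$ lie in $\{\mu,-1\}$ (cliques case) or $\{\mu,0\}$ (multipartite case). By the injectivity argument of Step~2, $B^\transpose$ kills $W$, so $B^\transpose W=0$ and $\operatorname{im}B\perp W$. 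Hence $Bx$ lies in the $-1$ or $0$ eigenspace, which gives $\tau=\theta_1+\theta_2+1=n+\theta_2$ or $\tau=\theta_1+\theta_2=\theta_1-n$. If instead $Bx=0$, the $(2,2)$ equation gives $(\tau-\theta_1)(\tau-\theta_2)=0$. Using $n$ from Step~2 this becomes $\tau\in\{\theta_1,-n\}$ or $\{n-1,\theta_2\}$ respectively.

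The delicate point is that $A_2$ need not commute with $J$ unless $\Gamma[V_2]$ is regular. This holds because the partition is equitable, so $A_2\mathbf1=k_{22}\mathbf1$. Thus $A_2$ can be diagonalised on $\mathbf1^\perp$, and since $B$ preserves the splitting into $\mathbf1$ and $\mathbf1^\perp$ by constant row and column sums, the cases above exhaust $\Spec(A_2)$. Together with $k_{22}$ on $\mathbf 1$, this yields the stated containments.
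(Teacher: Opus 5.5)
Your proof is correct and follows essentially the same route as the paper: the $(1,1)$-block of \eqref{eqn:3ev} gives the GDD. Evaluating that block on the group-constant space $W$, where $BB^\transpose$ vanishes under semiregularity, gives $(\mu-\theta_1)(\mu-\theta_2)=0$, which is exactly the paper's factorisation of $\beta$. The $(1,2)$- and $(2,2)$-blocks, together with $B^\transpose W=0$, then control $\Spec(A_2)$. The only difference is presentational: you argue eigenvector by eigenvector on $\mathbf 1^\perp$, splitting on whether $Bx=0$. The paper instead shows that $A_2$ acts as the scalar $\theta_1+\theta_2-a$ on the whole centred row space of $B$, and uses the orthogonal decomposition $\langle\mathbf 1\rangle\oplus\operatorname{Im}\bigl((B-\frac{k_{21}}{n_1}J)^\transpose\bigr)\oplus\ker B$, which gives slightly more than the containment the statement needs.
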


\begin{proof}
By Theorem~\ref{thm:equitableValency} the degree-partition is equitable.
Hence, $B\mathbf1_{n_2}=k_{12}\mathbf1_{n_1}$ and $B^\transpose\mathbf1_{n_1}=k_{21}\mathbf1_{n_2}$.
The $(1,1)$-block of \eqref{eqn:3ev} is
\begin{equation}\label{eq:BBstar}
BB^\transpose
   =-A_1^2+(\theta_1+\theta_2) A_1-\theta_1\theta_2I+\alpha_1^2J,
\end{equation}
where $\alpha_1^2=k_{11}+k_{12}+\theta_1\theta_2$.
In the complete multipartite case, $A_1=J-I_m\otimes J_n$ and $A_1^2=(n_1-2n)J+n(I_m\otimes J_n)$.
In the disjoint-union-of-cliques case, $A_1=I_m\otimes J_n-I$ and $A_1^2=I+(n-2)(I_m\otimes J_n)$.
Substitution in \eqref{eq:BBstar} yields
\[
BB^\transpose
   =k_{12}I+\lambda_1(I_m\otimes J_n-I)+\lambda_2(J-I_m\otimes J_n),
\]
with the values of $\lambda_1$ and $\lambda_2$ given by \eqref{eqn:l1l2}.
Since $B$ is a $\{0,1\}$-matrix with constant row and column sums, this proves the GDD assertion.

For the rest of the proof, assume that $\mathcal D$ is semiregular.
On the $(m-1)$-dimensional space of vectors that are constant on each group and orthogonal to $\mathbf1_{n_1}$, the eigenvalue of $BB^\transpose$ is
\begin{equation}
\label{eqn:beta}
    \beta=k_{12}+(n-1)\lambda_1-n\lambda_2.
\end{equation}
Using semiregularity and \eqref{eqn:gdd}, we have $\beta=k_{12}k_{21}-n_1\lambda_2=0$.
On the other hand, substituting \eqref{eqn:l1l2} into \eqref{eqn:beta} yields
\[
\beta=
\begin{cases}
-(n+\theta_1)(n+\theta_2),
    &\text{if $\Gamma[V_1]=\overline{mK_n}$},\\
-\bigl(n-1-\theta_1\bigr)\bigl(n-1-\theta_2\bigr),
    &\text{if $\Gamma[V_1]=mK_n$}.
\end{cases}
\]
By Lemma~\ref{lem:rank-one}, we have $\theta_1\geqslant0$ and $\theta_2<0$.
Furthermore, since $n\geqslant2$, the factors $n+\theta_1$ and $n-1-\theta_2$ are positive.
Consequently, $n=-\theta_2$ in the first case and $n=\theta_1+1$ in the second.

Since $n_1k_{12}=n_2k_{21}$, we have
\[
    \bigl(B-\frac{k_{21}}{n_1}J_{n_1,n_2}\bigr)
        \mathbf1_{n_2}=0,
    \qquad
    \bigl(B-\frac{k_{21}}{n_1}J_{n_1,n_2}\bigr)^\transpose
        \mathbf1_{n_1}=0.
\]
The vanishing of $\beta$ shows that $B^\transpose$ annihilates every vector orthogonal to $\mathbf1_{n_1}$ that is constant on each group.
Therefore, the sum of the incidence rows in any group is independent of the group, whence $(I_m \otimes J_n)B=(k_{21}/m)J$ and hence $(I_m \otimes J_n)\bigl(B-\frac{k_{21}}{n_1}J_{n_1,n_2}\bigr)=0$.
Thus,
\begin{equation}
    \label{eqn:A1J}
    A_1\bigl(B-\frac{k_{21}}{n_1}J_{n_1,n_2}\bigr)
=a\bigl(B-\frac{k_{21}}{n_1}J_{n_1,n_2}\bigr),
\end{equation}
where $a=0$ in the complete multipartite case and $a=-1$ in the disjoint-union-of-cliques case.
The $(1,2)$-block of \eqref{eqn:3ev} is
\[
A_1B+BA_2-(\theta_1+\theta_2) B=\alpha_1\alpha_2J.
\]
Using \eqref{eqn:A1J}, we obtain
\[
\bigl(B-\frac{k_{21}}{n_1}J_{n_1,n_2}\bigr)A_2
=(\theta_1+\theta_2-a)\bigl(B-\frac{k_{21}}{n_1}J_{n_1,n_2}\bigr),
\]
and transposing yields,
\[
A_2\bigl(B-\frac{k_{21}}{n_1}J_{n_1,n_2}\bigr)^\transpose
=(\theta_1+\theta_2-a)\bigl(B-\frac{k_{21}}{n_1}J_{n_1,n_2}\bigr)^\transpose.
\]
Thus $A_2$ acts as the scalar $\theta_1+\theta_2-a$ on $\operatorname{Im}\bigl((B-\frac{k_{21}}{n_1}J_{n_1,n_2})^\transpose\bigr)$.
Moreover,
\[
\mathbb R^{n_2}
 =\langle\mathbf1_{n_2}\rangle
   \oplus\operatorname{Im}\bigl((B-\frac{k_{21}}{n_1}J_{n_1,n_2})^\transpose\bigr)
   \oplus\ker B
\]
is an orthogonal decomposition.
Indeed, $k_{21}>0$ by connectedness, and the row space of $B$ is $\langle\mathbf1_{n_2}\rangle\oplus \operatorname{Im}\bigl((B-\frac{k_{21}}{n_1}J_{n_1,n_2})^\transpose\bigr)$.
These subspaces are invariant under the symmetric matrix $A_2$.
Since $\ker B\subseteq\mathbf1_{n_2}^{\perp}$ the $(2,2)$-block of \eqref{eqn:3ev} restricted to $\ker B$ yields
\[
(A_2-\theta_1I)(A_2-\theta_2I)=0.
\]
The eigenvalues of $A_2$ therefore belong to $\{k_{22},\theta_1+\theta_2-a,\theta_1,\theta_2\}$.
\end{proof}

\subsection{GDDs to biregular graphs}

Theorem~\ref{thm:2-des} shows that a biregular graph with three eigenvalues yields a GDD when one degree class induces a complete multipartite graph with at least two parts of size at least two.
We now give sufficient conditions in the reverse direction, constructing a graph in $\mathscr G_3$ from a semiregular GDD, a regular graph on its blocks, and a scalar compatibility condition. 

Define the matrix $X_{\mathcal D}$ to have rows and columns indexed by $\mathfrak X$, where $i$ is adjacent to $j$ if and only if $i$ and $j$ are in different groups.

\begin{remark}
\label{rem:semireg}
    Semiregular GDDs have the property that every block intersects every group in the same number of points.
    Since $\mathcal{D}$ is a semiregular GDD, every block intersects every group in exactly $k/m$ points. 
    This implies $(I_m \otimes J_n) B = \frac{k}{m} J_{v,b}$. 
Thus, $X_{\mathcal D} B = (J_v - I_m \otimes J_n) B = k J_{v,b} - \frac{k}{m} J_{v,b} = \frac{k(m-1)}{m} J_{v,b}$. 
\end{remark}

\begin{theorem}\label{thm:gdd-to-3ev}
Let $B$ be the incidence matrix of a semiregular group divisible design $\mathcal D$ with parameters $(v,b;r,k;\lambda_1,\lambda_2;m,n)$, where $m,n\geqslant2$.
Suppose there is a $k_\Gamma$-regular graph $\Gamma$ on $b$ vertices such that
\begin{align}
    B A(\Gamma) &= (\lambda_2-\lambda_1-n)B
        + \frac{k}{v}(k_\Gamma+\lambda_1-\lambda_2+n)J,
        \label{eqn:aux-mixed}\\[3pt]
         A(\Gamma)^2-(\lambda_2-\lambda_1-n)A(\Gamma)+B^\transpose B
        &=\frac{1}{b}\big((k_\Gamma+\lambda_1-\lambda_2)(k_\Gamma+n)+rk\big)J
                     +n(\lambda_2-\lambda_1)I,
        \label{eqn:aux-block}\\[3pt]
        \left(\frac{k(m-1)}{m}
                 +\frac{k}{v}(k_\Gamma+\lambda_1-\lambda_2+n)\right)^2
        &=\frac{v-n+\lambda_1}{b}
                     \big((k_\Gamma+\lambda_1-\lambda_2)(k_\Gamma+n)+rk\big).
        \label{eqn:aux-scalar}
\end{align}
Then the block matrix $\left [ \begin{smallmatrix} X_{\mathcal D} & B \\ B^\transpose & A(\Gamma) \end{smallmatrix} \right ]$ is the adjacency matrix of a connected graph in
\[
\mathscr{G}_3 \left (\frac{v - n + k_\Gamma + \sqrt{(v - n - k_\Gamma)^2 + 4rk}}{2}, \lambda_2 - \lambda_1, -n \right ),
\]
which is biregular if and only if $v-n+r\ne k_\Gamma+k$; otherwise, it is regular.
\end{theorem}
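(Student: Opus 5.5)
The plan is to check directly that $A=\left[\begin{smallmatrix} X_{\mathcal D} & B\\ B^\transpose & A(\Gamma)\end{smallmatrix}\right]$ satisfies a rank-one identity of the form \eqref{eqn:3ev}. Set $\theta_1:=\lambda_2-\lambda_1$, $\theta_2:=-n$, and $s:=\theta_1+\theta_2=\lambda_2-\lambda_1-n$. We want
\[
A^2-sA+\theta_1\theta_2 I=\bm\alpha\bm\alpha^\transpose,
\]
where $\bm\alpha$ takes the constant value $\alpha_1$ on the points and $\alpha_2$ on the blocks. For this we compute the three blocks of $A^2-sA+\theta_1\theta_2I$ and show that each is a scalar multiple of $J$, with scalars $\alpha_1^2$, $\alpha_1\alpha_2$ and $\alpha_2^2$ respectively.

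\textbf{$(1,1)$-block.} This block is $X_{\mathcal D}^2+BB^\transpose-sX_{\mathcal D}-n\theta_1 I$. We know $X_{\mathcal D}^2=(v-2n)J+n(I_m\otimes J_n)$, and $BB^\transpose$ is given by \eqref{eqn:inc}. Substituting, the coefficients of $I$ and of $I_m\otimes J_n$ should cancel. This uses $r-\lambda_1=n\theta_1$? Not quite, so we take care here: semiregularity gives $rk=v\lambda_2$, and together with \eqref{eqn:gdd} this yields $r-\lambda_1+n(\lambda_1-\lambda_2)=0$, i.e. $r=\lambda_1+n\theta_1$. That is exactly what cancels the $I$ and $I_m\otimes J_n$ terms. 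What remains is $\alpha_1^2 J$ with $\alpha_1^2=v-n+\lambda_1$; this matches the right-hand side of \eqref{eqn:aux-scalar}.

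\textbf{$(1,2)$-block.} This block is $X_{\mathcal D}B+BA(\Gamma)-sB$. By Remark~\ref{rem:semireg} and \eqref{eqn:aux-mixed} it equals
\[
\Bigl(\tfrac{k(m-1)}{m}+\tfrac{k}{v}(k_\Gamma+\lambda_1-\lambda_2+n)\Bigr)J=:cJ.
\]
\textbf{$(2,2)$-block.} This block is $B^\transpose B+A(\Gamma)^2-sA(\Gamma)-n\theta_1 I$, which by \eqref{eqn:aux-block} equals $dJ$ with
\[
d=\tfrac1b\bigl((k_\Gamma+\lambda_1-\lambda_2)(k_\Gamma+n)+rk\bigr).
\]
Condition \eqref{eqn:aux-scalar} says $c^2=\alpha_1^2 d$. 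Hence, provided $d>0$ (which I must verify, e.g.\ via $d=\alpha_2^2$ once the row-sum identity below holds), setting $\alpha_2=c/\alpha_1$ makes the whole right-hand side equal to $\bm\alpha\bm\alpha^\transpose$.

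\textbf{Deducing the spectrum.} From the identity, $A$ has at most three eigenvalues. Every vector orthogonal to $\bm\alpha$ is annihilated by $(A-\theta_1I)(A-\theta_2I)$. Also $A\bm\alpha$ is computed from the quotient matrix
\[
Q=\begin{bmatrix} v-n & r\\ k & k_\Gamma\end{bmatrix},
\]
and the rows of the identity show $\bm\alpha$ is an eigenvector of $A$. Its eigenvalue is the Perron root of $Q$,
\[
\theta_0=\frac{v-n+k_\Gamma+\sqrt{(v-n-k_\Gamma)^2+4rk}}{2}.
\]
Connectivity follows because $r,k>0$ and the quotient is irreducible. To confirm that the three eigenvalues are distinct, note $\theta_1=\lambda_2-\lambda_1=(r-\lambda_1)/n>0>-n$. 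Moreover, $\theta_0$ exceeds $\theta_1$, since it is at least the maximum degree of a non-complete connected graph and $\theta_1<\theta_0$ must hold for a Perron root. Thus the spectrum has exactly three values, provided not all of $\mathbb R^{v+b}\ominus\bm\alpha$ lies in a single eigenspace. That case would force $A$ to have two eigenvalues, i.e.\ a complete graph, which is impossible here since $X_{\mathcal D}$ has non-edges.

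\textbf{Degrees and the main obstacle.} The degrees are $v-n+r$ on points and $k+k_\Gamma$ on blocks, which gives the biregular/regular dichotomy. The main obstacle is the bookkeeping that $\bm\alpha$ really is consistent: the value $\alpha_2^2=d$ must agree with $k+k_\Gamma+\theta_1\theta_2$, and $d$ must be positive. I would verify this by multiplying \eqref{eqn:aux-block} by $\mathbf 1$, using $B^\transpose B\mathbf1=rk\mathbf1$. Since the identity is already established, the relation $\alpha_x^2=d_x+\theta_1\theta_2$ then follows automatically from its diagonal.
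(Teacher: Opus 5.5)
Your route is the same as the paper's: compute the three blocks of $A^2-(\theta_1+\theta_2)A+\theta_1\theta_2I$, read off $\alpha_1^2=v-n+\lambda_1$, $c$ and $d$, then use \eqref{eqn:aux-scalar}. There is, however, a genuine gap at the step where you identify the eigenvalue of $\bm\alpha$ with the larger root of $Q$. That step requires $\alpha_2>0$, i.e.\ $c>0$, and you never prove it.

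Positivity of $d$ is the wrong target. Setting $\alpha_2=c/\alpha_1$ gives the rank-one identity for either sign of $c$, and \eqref{eqn:aux-scalar} already forces $d=c^2/\alpha_1^2\geqslant0$. What matters is the sign of $c$. If $c<0$, then $(\alpha_1,\alpha_2)$ is the mixed-sign eigenvector of $Q$, so the eigenvalue of $\bm\alpha$ is the smaller root of $Q$. The larger root is then an eigenvalue on $\bm\alpha^\perp$, hence equals $\theta_1$, and the stated conclusion fails. Your proposed check, multiplying \eqref{eqn:aux-block} by $\mathbf1$, only reproduces the definition of $d$ and carries no information.

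The missing idea is an a priori bound on $\theta_1$ coming from the hypotheses. The paper rewrites \eqref{eqn:aux-mixed} as $(B-\frac kvJ)A(\Gamma)=(\theta_1-n)(B-\frac kvJ)$ with $B-\frac kvJ\neq O$. So $\theta_1-n$ is an eigenvalue of the $k_\Gamma$-regular graph $\Gamma$, whence $\theta_1-n\leqslant k_\Gamma$ and
\[
c=\tfrac kv(v+k_\Gamma-\theta_1)\geqslant\tfrac kv(v-n)>0 .
\]
Alternatively, within your own setup: the diagonal of \eqref{eqn:aux-block} gives $d=k+k_\Gamma-n\theta_1$. Then $d\geqslant0$ yields $n\theta_1\leqslant k+k_\Gamma\leqslant v+k_\Gamma$, so $\theta_1<v+k_\Gamma$ and again $c>0$.

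Two smaller errors need fixing. First, the Perron root is at most, not at least, the maximum degree. To get $\theta_0>\theta_1$, apply the identity to $\bm\alpha$ instead: $(\theta_0-\theta_1)(\theta_0+n)=\|\bm\alpha\|^2>0$. Second, irreducibility of $Q$ does not by itself give connectivity. Use that $X_{\mathcal D}$ is connected because $m\geqslant2$, and that every block vertex has $k>0$ point neighbours.
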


\begin{proof}
Since $\mathcal D$ is semiregular, \eqref{eqn:gdd} becomes $r-\lambda_1=n(\lambda_2-\lambda_1)>0$.
In particular, $\lambda_2-\lambda_1>0$ and $k>0$.
We can rearrange \eqref{eqn:aux-mixed} as
\[
    \left(B-\frac{k}{v}J_{v,b} \right)A(\Gamma)= \left(\lambda_2-\lambda_1-n \right) \left( B-\frac{k}{v}J_{v,b} \right).
\]
Since $(B-\frac{k}{v}J_{v,b})$ is nonzero, $\lambda_2-\lambda_1-n$ is an eigenvalue of $A(\Gamma)$, and $\lambda_2-\lambda_1-n \leqslant k_\Gamma$.
Therefore,
\[
    C\coloneqq\frac{k(m-1)}{m}
          +\frac{k}{v}(k_\Gamma-\lambda_2+\lambda_1+n)
      =\frac{k}{v}(v+k_\Gamma-\lambda_2+\lambda_1)
      \geqslant\frac{k}{v}(v-n)>0.
\]
Since $v-n+\lambda_1>0$, we apply \eqref{eqn:aux-scalar} to deduce $d\coloneqq\frac{1}{b}
       \big((k_\Gamma-\lambda_2+\lambda_1)(k_\Gamma+n)+rk\big)>0$.
We may therefore set
\[
    \alpha_1=\sqrt{v-n+\lambda_1},\qquad
    \alpha_2=\sqrt d
\]
and \eqref{eqn:aux-scalar} implies $\alpha_1\alpha_2=C$.
Let $A=\left [\begin{smallmatrix}X_{\mathcal D}&B\\
                       B^\transpose&A(\Gamma)\end{smallmatrix}\right ]$.
We claim that
\begin{equation}\label{eqn:1p}
    A^2-(\lambda_2-\lambda_1-n) A-n(\lambda_2-\lambda_1) I
       =\begin{bmatrix}
          \alpha_1^2J_v&\alpha_1\alpha_2J_{v,b}\\
          \alpha_1\alpha_2J_{b,v}&\alpha_2^2J_b
        \end{bmatrix}.
\end{equation}

We have $X_{\mathcal D}=J_v-I_m\otimes J_n$, so $X_{\mathcal D}^2=(v-2n)J_v+nI_m\otimes J_n$ and
\[
BB^\transpose=(r-\lambda_1)I_v-(\lambda_2-\lambda_1) I_m\otimes J_n+\lambda_2J_v.
\]
Using $r-\lambda_1=n(\lambda_2-\lambda_1)$, the $(1,1)$-block of the left-hand side of \eqref{eqn:1p} is therefore $(v-n+\lambda_1)J_v=\alpha_1^2J_v$.
By Remark~\ref{rem:semireg} and \eqref{eqn:aux-mixed}, the $(1,2)$-block of left-hand side of \eqref{eqn:1p} is
\[
    X_{\mathcal D}B+BA(\Gamma)-(\lambda_2-\lambda_1-n) B=CJ_{v,b}
       =\alpha_1\alpha_2J_{v,b}.
\]
The $(2,1)$-block follows by transposition, and the $(2,2)$-block is exactly \eqref{eqn:aux-block}.  
This proves \eqref{eqn:1p}.

The complete multipartite graph on the point vertices is connected, since $m\geqslant2$, and every block vertex has $k>0$ point neighbours.
Hence the graph with adjacency matrix $A$ is connected.
Set $\bm\alpha^\transpose = [\alpha_1\mathbf1_v^\transpose,\alpha_2\mathbf1_b^\transpose]$.
Then \eqref{eqn:1p} becomes
\[
    (A-(\lambda_2-\lambda_1) I)(A+nI)=\bm\alpha\bm\alpha^\transpose.
\]
Since $A$ commutes with the right-hand side, it preserves $\langle\bm\alpha\rangle$.
The Perron--Frobenius theorem~\cite[Theorem 2.2.1]{BrouwerHaemers2012} therefore identifies $\bm\alpha$ as a Perron eigenvector, with simple eigenvalue $\theta_0$.
On $\bm\alpha^\perp$, we have $(A-(\lambda_2-\lambda_1) I)(A+nI) = O$, so all remaining eigenvalues belong to $\{\lambda_2-\lambda_1,-n\}$.
Both must occur, otherwise $A$ would have a quadratic minimal polynomial, forcing $(A^2)_{xy}=0$ for every pair of distinct nonadjacent vertices $x,y$.  
However, two distinct point vertices in the same group are nonadjacent and have at least $v-n>0$ common neighbours.
Moreover,
\[
    (\theta_0-\lambda_2+\lambda_1)(\theta_0+n)=\|\bm\alpha\|^2>0,
\]
and $\theta_0>0$, so $\theta_0>\lambda_2-\lambda_1$.

The point--block partition is equitable, with quotient matrix
\[
    Q=\begin{bmatrix}v-n&r\\k&k_\Gamma\end{bmatrix}.
\]
Its positive eigenvector $(\alpha_1,\alpha_2)^\transpose$ has eigenvalue $\theta_0$, which is therefore the larger root of its characteristic polynomial.  
Thus
\[
    \theta_0=
       \frac{v-n+k_\Gamma+
          \sqrt{(v-n-k_\Gamma)^2+4rk}}{2}.
\]
The row sums of $Q$ are $v-n+r$ and $k_\Gamma+k$, respectively.
These give the two vertex degrees, proving the final assertion.
\end{proof}

In view of Theorem~\ref{thm:gdd-to-3ev}, given a GDD $\mathcal D$, we call a graph $\Gamma$ that satisfies the hypotheses of Theorem~\ref{thm:gdd-to-3ev} an \textbf{auxiliary graph} of $\mathcal D$.

\begin{remark}\label{rem:degree-compatibility}
Under the hypotheses of Theorem~\ref{thm:gdd-to-3ev}, set $\theta_1=\lambda_2-\lambda_1$ and $\theta_2=-n$.
The diagonal entries of \eqref{eqn:aux-block}, together with \eqref{eqn:aux-scalar}, imply
\begin{equation}\label{eqn:degree-compatibility}
    \frac{k^2}{v^2}(v+k_\Gamma-\theta_1)^2
       =(v-n+\lambda_1)(k_\Gamma+k+\theta_1\theta_2).
\end{equation}
The point--block partition is equitable, with quotient matrix $Q=\left [ \begin{smallmatrix}v-n&r\\k&k_\Gamma\end{smallmatrix}\right ]$.
By Theorem~\ref{thm:gdd-to-3ev} and the spectral inclusion for equitable quotients, its nonprincipal eigenvalue is $\theta\in\{\theta_1,\theta_2\}$, so $(v-n-\theta)(k_\Gamma-\theta)=rk=v\lambda_2$.
Consequently, the candidate degrees are
\[
    k_\Gamma=\lambda_2-n\quad\text{if }\theta=\theta_2,
    \qquad
    k_\Gamma=\theta_1+\frac{v\lambda_2}{v-n-\theta_1}
       \quad\text{if }\theta=\theta_1.
\]
In the second case, $v-n>\theta_1$, since the smaller eigenvalue of $Q$ is less than either diagonal entry.  
Each candidate must also satisfy \eqref{eqn:degree-compatibility}.
\end{remark}

\begin{lemma}\label{lem:auxSpec}
    Let $\mathcal D$ be a semiregular group divisible design with parameters $(v,b; r,k; \lambda_1, \lambda_2; m, n)$.
    Suppose $\Gamma$ is an auxiliary graph of $\mathcal D$.
    Then 
    \[
    \operatorname{Spec}(A(\Gamma)) = \left \{ [k_\Gamma]^1, [\lambda_2-\lambda_1 - n]^{m(n-1)}, [\lambda_2-\lambda_1]^x, [-n]^y  \right \},
    \]
    where $x = \frac{n(b-1) - k_{\Gamma} - m(\lambda_2 - \lambda_1)(n-1)}{\lambda_2 - \lambda_1 + n}$ and $y = \frac{(\lambda_2 - \lambda_1)(b-1) + k_{\Gamma} - nm(n-1)}{\lambda_2 - \lambda_1 + n}$.
    Moreover,
    \begin{equation}\label{eqn:trace}
    \begin{aligned}
        k_\Gamma^2+m(n-1)(\lambda_2-\lambda_1-n)^2
           +x(\lambda_2-\lambda_1)^2+yn^2=bk_\Gamma.
    \end{aligned}
    \end{equation}
\end{lemma}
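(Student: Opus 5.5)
We decompose $\mathbb R^b$ orthogonally into subspaces invariant under $A=A(\Gamma)$, mirroring the last part of the proof of Theorem~\ref{thm:2-des}:
\[
\mathbb R^b=\langle\mathbf1_b\rangle\oplus W\oplus\ker B,\qquad W\coloneqq\operatorname{Im}\bigl((B-\tfrac{k}{v}J_{v,b})^\transpose\bigr).
\]
Since $\Gamma$ is $k_\Gamma$-regular, $\mathbf1_b$ is an eigenvector with eigenvalue $k_\Gamma$. The rearranged form of \eqref{eqn:aux-mixed}, transposed, reads
\[
A\,(B-\tfrac{k}{v}J_{v,b})^\transpose=(\lambda_2-\lambda_1-n)\,(B-\tfrac{k}{v}J_{v,b})^\transpose,
\]
so $A$ acts on $W$ as the scalar $\lambda_2-\lambda_1-n$.

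For the dimension of $W$, note that the row space of $B$ is $\langle\mathbf1_b\rangle\oplus W$. Hence $\dim W=\rank B-1=\rank B^\transpose B-1$. By \eqref{eqn:specbtb}, $B^\transpose B$ has rank $1+m(n-1)$, so $\dim W=m(n-1)$.

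On $\ker B$, which lies in $\mathbf1_b^\perp$ and is $A$-invariant as the orthogonal complement of invariant subspaces, \eqref{eqn:aux-block} reduces to
\[
A^2-(\lambda_2-\lambda_1-n)A-n(\lambda_2-\lambda_1)I=0,
\]
because $B^\transpose B$ and $J$ vanish there. This factors as $(A-(\lambda_2-\lambda_1))(A+n)=0$, so $A$ has eigenvalues in $\{\lambda_2-\lambda_1,-n\}$ on $\ker B$, with multiplicities $x$ and $y$ satisfying
\[
x+y=b-1-m(n-1).
\]

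For the second equation, take the trace of $A$:
\[
k_\Gamma+m(n-1)(\lambda_2-\lambda_1-n)+x(\lambda_2-\lambda_1)-yn=0.
\]
These two linear equations have determinant $\lambda_2-\lambda_1+n\neq0$. Eliminating $y$ gives
\[
x(\lambda_2-\lambda_1+n)=n(b-1)-nm(n-1)-k_\Gamma-m(n-1)(\lambda_2-\lambda_1-n)=n(b-1)-k_\Gamma-m(n-1)(\lambda_2-\lambda_1),
\]
which is the stated $x$, and $y$ follows in the same way. Finally, \eqref{eqn:trace} is $\tr(A^2)=2|E(\Gamma)|=bk_\Gamma$, written out as the sum of squared eigenvalues.

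The only subtle point is the dimension count, namely that $\dim W$ is exactly $m(n-1)$. This needs $\rank B=\rank B^\transpose B$ together with semiregularity, which is what makes the eigenvalue $rk-v\lambda_2$ vanish in \eqref{eqn:specbtb}. If some of the eigenvalues $\lambda_2-\lambda_1-n$, $\lambda_2-\lambda_1$, $-n$ coincide with one another or with $k_\Gamma$, the spectrum should be read as a multiset; the multiplicities are still correct, since they are computed subspace by subspace.
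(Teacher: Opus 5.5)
Your proof is correct and follows essentially the same route as the paper. Both use the orthogonal splitting $\langle\mathbf 1_b\rangle\oplus W\oplus\ker B$ (your $W$ is exactly the $(r-\lambda_1)$-eigenspace of $B^\transpose B$, and $\ker B=\ker B^\transpose B$), the quadratic from \eqref{eqn:aux-block} on the kernel, and the two trace identities; the only cosmetic difference is that the paper gets the scalar action on the middle space by multiplying \eqref{eqn:aux-mixed} by $B^\transpose$ and reads its dimension off \eqref{eqn:specbtb}, whereas you transpose the centred identity directly and count $\dim W$ via $\rank B=\rank B^\transpose B$.
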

\begin{proof}
    By \eqref{eqn:specbtb}, the matrix $B^{\transpose}B$ has the spectrum $[rk]^1,[r-\lambda_1]^{m(n-1)},[0]^{b-m(n-1)-1}$.

Multiplying \eqref{eqn:aux-mixed} on the left by $B^\transpose$ and taking transposes, we obtain $A(\Gamma)B^{\transpose}B = (\lambda_2 - \lambda_1 - n)B^{\transpose}B + \frac{k^2}{v}(k_{\Gamma} + \lambda_1 - \lambda_2 + n) J$.
We consider the eigenvalue $r-\lambda_1$ of $B^{\transpose}B$ which has multiplicity $m(n-1)$.
Any eigenvector in this eigenspace is orthogonal to $\mathbf{1}$. 
Hence, $A(\Gamma)$ has the eigenvalue $\lambda_2 - \lambda_1 - n$ on this eigenspace.

Now we use \eqref{eqn:aux-block} on the nullspace of $B^{\transpose}B$.
This eigenspace (for eigenvalue $0$) is orthogonal to $\mathbf{1}$.
Hence, the eigenvalue of $A(\Gamma)$ on this eigenspace satisfies the quadratic equation $\theta^2 - (\lambda_2 - \lambda_1 - n)\theta - n(\lambda_2 - \lambda_1) = 0$.
Thus, the corresponding eigenvalues of $A(\Gamma)$ are $\lambda_2 - \lambda_1$ and $-n$.

Therefore, $A(\Gamma)$ has the spectrum $[k_\Gamma]^1,[\lambda_2-\lambda_1-n]^{m(n-1)},[\lambda_2-\lambda_1]^x,[-n]^y$ which implies $1 + m(n-1) + x + y = b$.
Furthermore, since $\tr{A(\Gamma)} = 0$, we have $k_{\Gamma} + m(n-1)(\lambda_2 - \lambda_1 - n) + x(\lambda_2 - \lambda_1) + y(-n) = 0$.
One can then solve to find $x = \frac{n(b-1) - k_{\Gamma} - m(\lambda_2 - \lambda_1)(n-1)}{\lambda_2 - \lambda_1 + n}$ and $y = \frac{(\lambda_2 - \lambda_1)(b-1) + k_{\Gamma} - nm(n-1)}{\lambda_2 - \lambda_1 + n}$, as required.
Finally, $\tr(A(\Gamma)^2)=bk_\Gamma$ because $\Gamma$ is $k_\Gamma$-regular.  
Evaluating this trace from the spectrum proves \eqref{eqn:trace}.
\end{proof}

\subsection{Feasible parameters and known constructions}
\label{sub:feasible}

We enumerate semiregular-GDD parameter tuples $(v,b;r,k;\lambda_1,\lambda_2;m,n)$ satisfying necessary conditions for Theorem~\ref{thm:gdd-to-3ev}.  
We consider tuples $(v,n,k,\lambda_2)\in\mathbb N^4$ such that $2\leqslant n,k<v\leqslant100$, $n\mid v$, and $1\leqslant\lambda_2\leqslant50$.  
The relations of Section~\ref{sec:gdd} determine
\[
    m=\frac vn,\qquad r=\frac{v\lambda_2}{k},\qquad
    b=\frac{vr}{k},\qquad \lambda_1=\frac{n\lambda_2-r}{n-1}.
\]
We retain only nonnegative integral parameters with $m\mid k$ and $\lambda_2>\lambda_1$.  
For each remaining tuple, we consider the at most two candidate degrees from the quotient equations in Remark~\ref{rem:degree-compatibility}, retaining only integers $0\leqslant k_\Gamma\leqslant b-1$.
For each candidate we impose the degree condition of Remark~\ref{rem:degree-compatibility} and the trace identity of Lemma~\ref{lem:auxSpec}.  
We require the multiplicities $x,y$ of Lemma~\ref{lem:auxSpec} to be nonnegative integers, and the coefficients $c=(k/v)(k_\Gamma-\theta_1-\theta_2)$ and $c+\theta_1+\theta_2$ in \eqref{eqn:aux-mixed} to be nonnegative integers.
We also require the three full-graph eigenvalues to be integral, with positive integral multiplicities determined by the order and trace, and impose Theorem~\ref{thm:bireg}\,(iii).  
The latter is checked in the form
\[
    \frac{k}{v}(v+k_\Gamma-\theta_1)=-\theta(\theta'+1),
    \qquad \{\theta,\theta'\}=\{\theta_1,\theta_2\},
\]
where $\theta$ is the nonprincipal eigenvalue of the candidate quotient matrix.
Finally, we discard parameter sets with $(v-n)+r=k_\Gamma+k$, for which the resulting graph would be regular rather than biregular.
Exactly one regular parameter set passes the preceding checks; it is discarded: $(9,27;9,3;0,3;3,3)$ with $k_\Gamma=12$, which would give a strongly regular graph with parameters $(36,15,6,6)$.

\begin{table}[htbp]
    \centering
    \caption{Feasible parameter sets for biregular graphs in $\mathscr{G}_3$ generated by semiregular GDDs and complete multipartite subgraphs (Theorem~\ref{thm:gdd-to-3ev}) for $v \leqslant 100$ and $\lambda_2\leqslant50$.}
    \label{tab:feasible-parameters}
    \resizebox{\textwidth}{!}{
    \begin{tabular}{rrrrrrrrrrrrl}
        \toprule
        \multicolumn{8}{c}{\textbf{GDD Parameters}} & \multicolumn{3}{c}{\textbf{Degrees}} & \multicolumn{1}{c}{\textbf{Spectrum}} & \multicolumn{1}{c}{\textbf{Status}} \\
        \cmidrule(lr){1-8} \cmidrule(lr){9-11} \cmidrule(lr){12-12} \cmidrule(lr){13-13}
        $v$ & $b$ & $r$ & $k$ & $\lambda_1$ & $\lambda_2$ & $m$ & $n$ & $k_1$ & $k_2$ & $k_\Gamma$ & $[\theta_0]^1,\, [\theta_1]^{m_1},\, [\theta_2]^{m_2}$ & \\
        \midrule
        $12$ & $54$ & $18$ & $4$ & $0$ & $6$ & $4$ & $3$ & $27$ & $34$ & $30$ & $[33]^1,\, [6]^{18},\, [-3]^{47}$ & Open (Rem~\ref{rem:OA}) \\
        $14$ & $8$ & $4$ & $7$ & $0$ & $2$ & $7$ & $2$ & $16$ & $7$ & $0$ & $[14]^1,\, [2]^7,\, [-2]^{14}$ & Exists (Rem~\ref{rem:VanDamAffine}) \\
        $18$ & $108$ & $72$ & $12$ & $45$ & $48$ & $2$ & $9$ & $81$ & $51$ & $39$ & $[57]^1,\, [3]^{89},\, [-9]^{36}$ & Exists (Rem~\ref{rem:switch126}) \\
        $27$ & $54$ & $18$ & $9$ & $0$ & $6$ & $9$ & $3$ & $42$ & $24$ & $15$ & $[33]^1,\, [6]^{23},\, [-3]^{57}$ & Exists (Thm~\ref{thm:row4-muzychuk}) \\
        $36$ & $48$ & $24$ & $18$ & $8$ & $12$ & $9$ & $4$ & $56$ & $26$ & $8$ & $[44]^1,\, [4]^{36},\, [-4]^{47}$ & Exists (Cor~\ref{cor:main-three}) \\
        $39$ & $27$ & $9$ & $13$ & $0$ & $3$ & $13$ & $3$ & $45$ & $13$ & $0$ & $[39]^1,\, [3]^{26},\, [-3]^{39}$ & Exists (Rem~\ref{rem:VanDamAffine}) \\
        $60$ & $96$ & $48$ & $30$ & $16$ & $24$ & $15$ & $4$ & $104$ & $50$ & $20$ & $[80]^1,\, [8]^{45},\, [-4]^{110}$ & Exists (Thm~\ref{thm:quadrangle-graph}) \\
        $66$ & $120$ & $60$ & $33$ & $24$ & $30$ & $11$ & $6$ & $120$ & $57$ & $24$ & $[90]^1,\, [6]^{85},\, [-6]^{100}$ & Open \\
        $72$ & $135$ & $45$ & $24$ & $9$ & $15$ & $12$ & $6$ & $111$ & $48$ & $24$ & $[84]^1,\, [6]^{96},\, [-6]^{110}$ & Open \\
        $84$ & $64$ & $16$ & $21$ & $0$ & $4$ & $21$ & $4$ & $96$ & $21$ & $0$ & $[84]^1,\, [4]^{63},\, [-4]^{84}$ & Exists (Rem~\ref{rem:VanDamAffine})  \\
        $90$ & $180$ & $90$ & $45$ & $36$ & $45$ & $15$ & $6$ & $174$ & $84$ & $39$ & $[129]^1,\, [9]^{99},\, [-6]^{170}$ & Open \\
        $90$ & $180$ & $90$ & $45$ & $40$ & $45$ & $9$ & $10$ & $170$ & $80$ & $35$ & $[125]^1,\, [5]^{171},\, [-10]^{98}$ & Open \\
        \bottomrule
    \end{tabular}
    }
\end{table}

\begin{remark}
\label{rem:OA}
    The first and fourth rows of Table~\ref{tab:feasible-parameters} both have $\lambda_1 = 0$ and $k = m$, so every block of each is a transversal of the groups by     Remark~\ref{rem:semireg}; that is, their designs are \textbf{transversal designs}, or equivalently \textbf{orthogonal arrays of strength $2$}: arrays over an $n$-set in which every ordered pair of symbols occurs equally often in every pair of columns (see~\cite[Part~III]{handbookCD}).
    The first row requires an $\mathrm{OA}(54,4,3,2)$ and the fourth an $\mathrm{OA}(54,9,3,2)$.
    To obtain these arrays, choose representatives $a_1,\ldots,a_{13}$ of the one-dimensional subspaces of $\FF_3^3$ and form the $27$ rows $(x\cdot a_j)_{j=1}^{13}$, for $x\in\FF_3^3$.
    Two distinct columns are linearly independent, so every ordered symbol pair occurs three times.  
    Duplicating every row and retaining four or nine columns provides the required arrays.
    The required auxiliary graphs are respectively $30$-regular and $15$-regular on $54$ vertices with spectra $\{[30]^1,[6]^9,[3]^8,[-3]^{36}\}$ and $\{[15]^1,[6]^4,[3]^{18},[-3]^{31}\}$.  
    Having four distinct eigenvalues, neither is strongly regular.  
    Section~\ref{sec:row4-muzychuk} constructs a compatible design and auxiliary graph for the fourth row.  
    The first row remains open: an orthogonal array with the required parameters is available, but a compatible auxiliary graph is not known.
    For this row the complement of an auxiliary graph would have Seidel matrix $I+2A(\Gamma)-J$.  
    Its eigenvalue on $\mathbf1$ is $1+2\cdot30-54=7$, and its eigenvalues on $\mathbf1^\perp$ are $1+2\eta$, where $\eta$ runs through the restricted auxiliary eigenvalues.  
    Its spectrum would therefore be $\{[-5]^{36},[7]^9,[13]^9\}$.
    This Seidel spectrum was considered earlier by the first     author~\cite[Table~2]{LAApaper}.
\end{remark}

\begin{remark}
\label{rem:VanDamAffine}
    The three rows of Table~\ref{tab:feasible-parameters} with $k_\Gamma = 0$ form a single family.  
    Writing $q$ for the group size $n$, their parameters are
    \[
        \big(q(q^2{+}q{+}1),\ q^3;\ q^2,\ q^2{+}q{+}1;\ 0,\ q;\ q^2{+}q{+}1,\ q\big)
    \]
    with $q = 2,3,4$ respectively.
      Taking the auxiliary graph to be empty yields the graphs of Van Dam~\cite[\S2.3]{VANDAM1998101}.
\end{remark}

Let $\Gamma = (V,E)$ be a graph and let $\sigma = \{U,V\setminus U\}$ be a partition of the vertex set.
Writing the adjacency matrix of $\Gamma$ in block form as
\[
A(\Gamma) = \begin{bmatrix}
    A(\Gamma[U]) & M \\
    M^\transpose & A(\Gamma[V\setminus U])
\end{bmatrix},
\]
define the graph $\Gamma^\sigma$ by
\[
A(\Gamma^\sigma) = \begin{bmatrix}
    A(\Gamma[U]) & J-M \\
    J-M^\transpose & A(\Gamma[V\setminus U])
\end{bmatrix}.
\]
We say that $\Gamma^\sigma$ is obtained by \textbf{switching} $\Gamma$ with respect to $\sigma$.
Muzychuk and Klin~\cite{MUZYCHUK1998191} used the following tool to study the spectrum of a switched graph.

\begin{proposition}[{\cite[Corollary 3.2]{MUZYCHUK1998191}}]
\label{pro:specSwitch}
    Let $\sigma$ be an equitable $2$-partition of a graph $\Gamma$.
    Then
    \[
    \Spec \left (A(\Gamma^{\sigma}) \right ) =  \Spec(A(\Gamma)) \cup \Spec(Q_\sigma(\Gamma^{\sigma})) - \Spec(Q_\sigma(\Gamma)) .
    \]
\end{proposition}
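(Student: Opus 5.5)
The statement is Muzychuk--Klin's Proposition~\ref{pro:specSwitch}. The plan is to compare $A(\Gamma)$ and $A(\Gamma^\sigma)$ on two complementary invariant subspaces: the $2$-dimensional space $W$ of vectors constant on each part of $\sigma=\{U,V\setminus U\}$, and its orthogonal complement $W^\perp$. The identity is read as one of multisets: $\Spec(A(\Gamma^\sigma))$ is obtained from $\Spec(A(\Gamma))$ by deleting the two eigenvalues of $Q_\sigma(\Gamma)$ and adjoining those of $Q_\sigma(\Gamma^\sigma)$.

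First, equitability of $\sigma$ in $\Gamma$ means that $M\mathbf1=c\mathbf1$, $M^\transpose\mathbf1=c'\mathbf1$, and that $A(\Gamma[U])$ and $A(\Gamma[V\setminus U])$ have constant row sums. These facts give $A(\Gamma)W\subseteq W$, and since $A(\Gamma)$ is symmetric, also $A(\Gamma)W^\perp\subseteq W^\perp$.

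Next, with respect to the basis $\mathbf1_U,\mathbf1_{V\setminus U}$, the restriction of $A(\Gamma)$ to $W$ has matrix $Q_\sigma(\Gamma)$ (up to transpose, which has the same spectrum). Hence $\Spec(A(\Gamma))=\Spec(Q_\sigma(\Gamma))\cup\Spec(A(\Gamma)|_{W^\perp})$.

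Then observe that $J-M$ also has constant row and column sums, so $\sigma$ is equitable in $\Gamma^\sigma$, with diagonal quotient entries unchanged and off-diagonal entries $|V\setminus U|-c$ and $|U|-c'$. Thus $A(\Gamma^\sigma)$ likewise preserves $W$, where it acts by $Q_\sigma(\Gamma^\sigma)$, and it preserves $W^\perp$.

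The key step is that $A(\Gamma^\sigma)-A(\Gamma)=\left[\begin{smallmatrix}O&J-2M\\J-2M^\transpose&O\end{smallmatrix}\right]$ vanishes on $W^\perp$. A vector $x=(x_U,x_{V\setminus U})\in W^\perp$ satisfies $\mathbf1^\transpose x_U=\mathbf1^\transpose x_{V\setminus U}=0$, so the $J$ blocks kill it, leaving $-2(Mx_{V\setminus U},M^\transpose x_U)$. This term does not vanish in general, so a literal equality on $W^\perp$ fails. The statement only holds with the standard Seidel-type correction: switching is really $A\mapsto A'$ with $S(\Gamma^\sigma)=DS(\Gamma)D$ for $D=\operatorname{diag}(\pm1)$. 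I would therefore work with the Seidel matrices. Since $S(\Gamma^\sigma)$ is similar to $S(\Gamma)$, we have $A(\Gamma^\sigma)=\tfrac12(J-I-DS(\Gamma)D)$. On $W^\perp$, $J$ vanishes and $D$ preserves $W^\perp$, so $A(\Gamma^\sigma)|_{W^\perp}=D\bigl(-\tfrac12(I+S(\Gamma))\bigr)D=DA(\Gamma)D|_{W^\perp}$, which is similar to $A(\Gamma)|_{W^\perp}$.

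The main obstacle is exactly this step: verifying that $D$ preserves $W^\perp$ (it does, since $D$ is $\pm1$ blockwise) and that $DJD$ also acts as zero there. With that established, $A(\Gamma^\sigma)|_{W^\perp}$ and $A(\Gamma)|_{W^\perp}$ have the same spectrum, and combining with the decompositions on $W$ gives $\Spec(A(\Gamma^\sigma))=\Spec(Q_\sigma(\Gamma^\sigma))\cup\Spec(A(\Gamma)|_{W^\perp})$, which is the claimed multiset identity.
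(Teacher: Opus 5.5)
Your proof is correct. The splitting $\mathbb R^{V}=W\oplus W^\perp$ and the identification of the actions of $A(\Gamma)$ and $A(\Gamma^\sigma)$ on $W$ with $Q_\sigma(\Gamma)$ and $Q_\sigma(\Gamma^\sigma)$ handle the two-dimensional part. The identity $A(\Gamma^\sigma)|_{W^\perp}=DA(\Gamma)D|_{W^\perp}$ handles the rest, where $D=\operatorname{diag}(I_{|U|},-I_{|V\setminus U|})$ preserves $W^\perp$. Together these give exactly the claimed multiset identity.

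The paper gives no proof of this proposition; it imports the result from Muzychuk and Klin. So there is no in-paper argument to compare against, and your argument is the natural standard one.

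Two small points of presentation:
\begin{itemize}
\item The Seidel-matrix detour is unnecessary. Directly, $A(\Gamma^\sigma)=DA(\Gamma)D+\left[\begin{smallmatrix}O&J\\J&O\end{smallmatrix}\right]$, and the second summand annihilates $W^\perp$.
\item Your sentence that ``the statement only holds with the Seidel-type correction'' should refer to your intermediate claim, not to the proposition. That claim, equality of the two restrictions to $W^\perp$, is false. The proposition itself is true as stated, and you go on to prove it with equality on $W^\perp$ replaced by similarity.
\end{itemize}
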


\begin{remark}
\label{rem:switch126}
    The third row of Table~\ref{tab:feasible-parameters} is obtained by switching a strongly regular graph $\Delta$ with parameters $(126,45,12,18)$, whose spectrum is $\{[45]^1,[3]^{90},[-9]^{35}\}$.
    For completeness, such a pair $(\Delta,U)$ can be specified in coordinates.  
    Take the standard orthogonal graph $\operatorname{NO}_6^-(3)$ ~\cite{BrouwerVanMaldeghem}: its vertices are the projective points $\langle x\rangle$ with $x\cdot x=1$ in $\FF_3^6$, and adjacency is orthogonality for the standard dot product.  
    Let
    \[
        T=\langle e_3+e_4+e_5,\ e_3-e_4+e_6\rangle,
        \qquad
        U_i=\{\langle e_i+t\rangle:t\in T\}\quad(i=1,2).
    \]
    The space $T$ is totally isotropic and perpendicular to $e_1,e_2$.
    Each $U_i$ consists of nine distinct norm-one points.
Since $T$ is totally isotropic and perpendicular to $e_1,e_2$,
\[
    (e_i+s)\cdot(e_j+t)=
    \begin{cases}
        1,& i=j,\\
        0,& i\ne j,
    \end{cases}
    \qquad(s,t\in T;\ i,j\in\{1,2\}).
\]
Thus there are no edges within either $U_i$, and every vertex of $U_1$ is adjacent to every vertex of $U_2$.
Hence $U=U_1\cup U_2$ induces $K_{9,9}=\overline{2K_9}$.
    The partition $\sigma=\{U,V(\Delta)\setminus U\}$ is necessarily equitable.
    Indeed, set $C=A(\Delta)$ and $d_x=|N_\Delta(x)\cap U|$ for $x\notin U$.
    Since $C^2=27I-6C+18J$ and $\Delta[U]$ is $9$-regular, we have
    \[
        \sum_{x\notin U}d_x=18(45-9)=648,\qquad
        \sum_{x\notin U}d_x^2
        =27\cdot18-6\cdot18\cdot9+18\cdot18^2-18\cdot9^2=3888.
    \]
    It follows that $\sum_{x\notin U}(d_x-6)^2=3888-12\cdot648+36\cdot108=0$.
    Thus every vertex outside $U$ has exactly $6$ neighbours in $U$, giving
    \[
        Q_\sigma(\Delta) = \begin{bmatrix} 9 & 36 \\ 6 & 39 \end{bmatrix},
        \qquad
        Q_\sigma(\Delta^\sigma) = \begin{bmatrix} 9 & 72 \\ 12 & 39 \end{bmatrix},
    \]
    These matrices have spectra $\{45,3\}$ and $\{57,-9\}$, respectively.  
    Hence Proposition~\ref{pro:specSwitch} yields
    \[
        \Spec(A(\Delta^\sigma))=\{[57]^1,[3]^{89},[-9]^{36}\}.
    \]
    Thus $\Delta^\sigma\in\mathscr G_3(57,3,-9)$, with degrees $81$ and $51$.
    Switching does not alter $\Delta[U]$, so $\Delta^\sigma[U] = \overline{2K_9}$ is complete multipartite and Theorem~\ref{thm:2-des} applies, returning a group divisible design with $m = 2$, $n = 9$, $r = 72$, $k = 12$, $\lambda_1 = 72 + 3\cdot(-9) = 45$ and $\lambda_2 = 72 + 4\cdot(-8) - 1 + 9 = 48$, which are the parameters listed.
    Its auxiliary graph is $\Delta[V(\Delta)\setminus U]$, of degree $39$ on $108$ vertices, with spectrum $\{[39]^1,[3]^{73},[-6]^{16},[-9]^{18}\}$ by Lemma~\ref{lem:auxSpec}.
\end{remark}

%% ============================================================================
\section{Multiplicative graphs}
\label{sec:multiplicative}

We now specialise the correspondence of Section~\ref{sec:gdds} to multiplicative graphs.  
We first determine the restrictions on the GDD parameters and the spectrum of an auxiliary graph, without choosing a construction.  
The case $n=s^2$, where $s=k/m$, will give the parameters used in Section~\ref{sec:design}.

\subsection{The multiplicative identities}

A graph $\Gamma\in\mathscr G_3(\theta_0,\theta_1,\theta_2)$ is called \textbf{multiplicative} if $\theta_1=-\theta_2$.  
For the complete multipartite construction of Theorem~\ref{thm:gdd-to-3ev}, this is equivalent to $\lambda_2=\lambda_1+n$.  
We call a semiregular GDD with this property \textbf{multiplicative}.
Six of the twelve rows of Table~\ref{tab:feasible-parameters} are of this kind.  
In this case, let $G$ be the auxiliary graph and write $A(G)$ for its adjacency matrix.  
The two matrix identities of Theorem~\ref{thm:gdd-to-3ev} become
\[
    BA(G)=\frac{k k_\Gamma}{v}J,
    \qquad A(G)^2+B^\transpose B=n^2I+\alpha_2^2J,
\]
where $r-\lambda_1=n(\lambda_2-\lambda_1)=n^2$ and $\alpha_2^2=k_\Gamma+k-n^2$.
The scalar compatibility condition~\eqref{eqn:aux-scalar} must also hold.
By \eqref{eqn:specbtb}, $B^\transpose B$ has eigenvalues $n^2$ and $0$ only on $\mathbf1^\perp$.  
Hence $n^{-1}A(G)$ acts as an involution on $\ker B$ and as zero on the centred row space $\operatorname{row}(B)\cap\mathbf1^\perp$.

\subsection{Parameters and the auxiliary spectrum}

Multiplicativity also reduces the number of free parameters.
Indeed, let $s \coloneqq k/m$ denote the number of points in which each block meets each group.
Then all eight GDD-parameters are determined by $m$, $n$, $\lambda_2$ and $s$, namely
\begin{equation}
\label{eqn:BSB}
    v = mn, \qquad b = \frac{n^2\lambda_2}{s^2}, \qquad r = \frac{n\lambda_2}{s}, \qquad
    k = sm, \qquad \lambda_1 = \frac{n(s-1)\lambda_2}{(n-1)s} .
\end{equation}
Imposing multiplicativity, $\lambda_2 = \lambda_1+n$, and eliminating $\lambda_1$ and $\lambda_2$ from \eqref{eqn:BSB} reduces to the single relation $r(n-s) = n^2(n-1)$, whence
\begin{equation}
\label{eqn:params-ns}
    r = \frac{n^2(n-1)}{n-s}, \quad
    \lambda_1 = \frac{n^2(s-1)}{n-s}, \quad
    \lambda_2 = \frac{ns(n-1)}{n-s}, \quad
    b = \frac{n^3(n-1)}{s(n-s)} ,
\end{equation}
together with $v = mn$ and $k = ms$, where $1\leqslant s<n$.

\begin{lemma}
\label{lem:mult-aux}
    Let $\mathcal D$ be a multiplicative semiregular group divisible design and let $\Gamma$ be an auxiliary graph of $\mathcal D$.
    Suppose $\mathcal D$ has parameters $(v,b;r,k;\lambda_1,\lambda_2;m,n)$.
    Then $k_\Gamma=\lambda_1$, or $m>2$ and     $k_\Gamma=\frac{2\lambda_2(v-n)}{v-2n}-\lambda_1$.
    Moreover,
    \begin{equation*}
    m = \frac{k_\Gamma^2-b\,k_\Gamma+n^2(b-1)}{n^2(n-1)}, \qquad x + y = b - 1 - m(n-1), \qquad y - x = \frac{k_\Gamma}{n},
    \end{equation*}
    and, 
    \[
        \Spec(A(\Gamma)) = \left\{ [k_\Gamma]^1,\ [0]^{m(n-1)},\ [n]^{x},\ [-n]^{y} \right\}.
    \]
\end{lemma}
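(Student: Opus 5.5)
The plan is to specialise the general results of Section~\ref{sec:gdds} to the multiplicative case. The key substitution is $\theta_1=\lambda_2-\lambda_1=n$ and $\theta_2=-n$. Both the candidate degrees and the spectral statements then follow from Remark~\ref{rem:degree-compatibility} and Lemma~\ref{lem:auxSpec} by algebraic rewriting.

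For the degree, I will invoke Remark~\ref{rem:degree-compatibility}. By Theorem~\ref{thm:gdd-to-3ev}, the nonprincipal eigenvalue $\theta$ of the point--block quotient matrix $Q=\left[\begin{smallmatrix}v-n&r\\k&k_\Gamma\end{smallmatrix}\right]$ lies in $\{\theta_1,\theta_2\}=\{n,-n\}$. There are two cases.
\begin{itemize}
\item If $\theta=\theta_2$, the remark gives $k_\Gamma=\lambda_2-n$, and this equals $\lambda_1$ by multiplicativity.
\item If $\theta=\theta_1=n$, the remark gives $k_\Gamma=n+\frac{v\lambda_2}{v-2n}$. It also gives $v-n>\theta_1=n$, that is, $v>2n$, which means $m>2$ since $v=mn$.
\end{itemize}
It remains to match the second expression with the stated form. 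Since $\lambda_1=\lambda_2-n$,
\[
\frac{2\lambda_2(v-n)}{v-2n}-\lambda_1=\frac{2\lambda_2(v-n)-\lambda_2(v-2n)}{v-2n}+n=\frac{v\lambda_2}{v-2n}+n,
\]
which agrees with the candidate degree above.

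For the spectrum, I will substitute $\lambda_2-\lambda_1=n$ into Lemma~\ref{lem:auxSpec}. The eigenvalue $\lambda_2-\lambda_1-n$ becomes $0$, with multiplicity $m(n-1)$, and the remaining eigenvalues become $n$ and $-n$. This gives $\Spec(A(\Gamma))=\{[k_\Gamma]^1,[0]^{m(n-1)},[n]^x,[-n]^y\}$. Rather than simplifying the explicit formulas for $x$ and $y$, I will use the three relations from the proof of that lemma directly.
\begin{itemize}
\item Counting multiplicities gives $1+m(n-1)+x+y=b$, which is the stated formula for $x+y$.
\item The trace condition $\tr A(\Gamma)=0$ reads $k_\Gamma+nx-ny=0$, hence $y-x=k_\Gamma/n$.
\item The identity \eqref{eqn:trace} becomes $k_\Gamma^2+n^2(x+y)=bk_\Gamma$, since the $0$-eigenspace contributes nothing.
\end{itemize}
Substituting $x+y=b-1-m(n-1)$ into the last identity gives $k_\Gamma^2+n^2(b-1)-n^2m(n-1)=bk_\Gamma$. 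Solving for $m$ yields $m=\frac{k_\Gamma^2-bk_\Gamma+n^2(b-1)}{n^2(n-1)}$.

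I do not expect a genuine obstacle. The only step needing care is the degree dichotomy: one must justify that the nonprincipal quotient eigenvalue is $\pm n$, and extract the condition $m>2$ from the strict inequality $v-n>\theta_1$. Both are supplied by Remark~\ref{rem:degree-compatibility}, so that step reduces to a citation followed by the short computation above.
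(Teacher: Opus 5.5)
Your proposal is correct and follows the paper's proof essentially step for step. You obtain the degree dichotomy from Remark~\ref{rem:degree-compatibility} (with $v-n>n$ giving $m>2$), the spectrum by substituting $\theta_1=n$, $\theta_2=-n$ into Lemma~\ref{lem:auxSpec}, and the formula for $m$ from the trace identity; the only cosmetic difference is that you read off $x+y$ and $y-x$ from the multiplicity count and $\tr A(\Gamma)=0$ rather than by adding and subtracting the explicit formulas for $x$ and $y$, which is equivalent.
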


\begin{proof}
By Lemma~\ref{lem:auxSpec} with $\theta_1 = n$ and $\theta_2 = -n$, we have the stated spectrum, since $\theta_1+\theta_2 = 0$ and $\theta_1-\theta_2 = 2n$.
Adding and subtracting the expressions for $x$ and $y$ in Lemma~\ref{lem:auxSpec}, one obtains the stated sum and difference.
Since $x$ and $y$ are integers, so is $y-x = k_\Gamma/n$.

By Remark~\ref{rem:degree-compatibility}, $k_\Gamma=\lambda_2-n=\lambda_1$ or
\[
    k_\Gamma=n+\frac{v\lambda_2}{v-2n}
    =\frac{2\lambda_2(v-n)}{v-2n}-\lambda_1.
\]
In the latter case, Remark~\ref{rem:degree-compatibility} also implies $v-n>n$, so $v=mn$ implies $m>2$.

Multiplicativity implies that $\theta_1+\theta_2 = 0$ and $\theta_1^2 = \theta_2^2 = n^2$, so the trace identity of Lemma~\ref{lem:auxSpec} reads $k_\Gamma^2 + n^2(x+y) = bk_\Gamma$; substituting $x+y = b-1-m(n-1)$ and solving for $m$ yields the stated formula.
\end{proof}

If the graph constructed in Theorem~\ref{thm:gdd-to-3ev} is biregular and its nonprincipal quotient eigenvalue is $n$, then, with $s=k/m$,
\begin{equation}\label{eqn:mult-bireg-branch}
    sm\left(m-2+\frac{\lambda_2}{n}\right)
      =n(n-1)(m-2).
\end{equation}
Indeed, by Theorem~\ref{thm:bireg}\,(iii), we have $\alpha_1\alpha_2=n(n-1)$, while the mixed block of \eqref{eqn:3ev} implies $\alpha_1\alpha_2=(s/n)(mn+k_\Gamma-n)$.
For this quotient branch, Lemma~\ref{lem:mult-aux} implies $m>2$ and $k_\Gamma=n+m\lambda_2/(m-2)$.  
Substituting this value and multiplying by $m-2$ proves \eqref{eqn:mult-bireg-branch}.

Now we show that the case when $s=1$ corresponds to the construction in Remark~\ref{rem:VanDamAffine} when $n$ is a prime power.

\begin{proposition}\label{prop:s-one}
Let $\mathcal D$ be a multiplicative semiregular GDD with parameters $(v,b;r,k;\lambda_1,\lambda_2;m,n)$ and $s=k/m=1$.
Suppose $\Gamma$ be an auxiliary graph of $\mathcal D$ and $v-n+r\ne k_\Gamma+k$.
Then $\Gamma$ is empty and
\[
    (v,b;r,k;\lambda_1,\lambda_2;m,n)
      =\bigl(n(n^2+n+1),n^3;n^2,n^2+n+1;0,n;n^2+n+1,n\bigr).
\]
\end{proposition}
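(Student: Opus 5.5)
The plan is to substitute $s=1$ into the parametrisation \eqref{eqn:params-ns} and then use Lemma~\ref{lem:mult-aux} to pin down $k_\Gamma$ and $m$. With $s=1$, \eqref{eqn:params-ns} gives
\[
r=n^2,\qquad \lambda_1=0,\qquad \lambda_2=n,\qquad b=n^3,\qquad k=m,\qquad v=mn.
\]
By Lemma~\ref{lem:mult-aux} there are two branches: $k_\Gamma=\lambda_1=0$, or $m>2$ and $k_\Gamma=\frac{2\lambda_2(v-n)}{v-2n}-\lambda_1=\frac{2n(m-1)}{m-2}$.

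In the first branch, $\Gamma$ is empty. The formula for $m$ in Lemma~\ref{lem:mult-aux} then gives
\[
m=\frac{n^2(n^3-1)}{n^2(n-1)}=n^2+n+1.
\]
Substituting this value of $m$ into $v=mn$ and $k=m$ yields exactly the stated parameter tuple.

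The main work is ruling out the second branch, which corresponds to the quotient eigenvalue $\theta=\theta_1=n$ in Remark~\ref{rem:degree-compatibility}. Lemma~\ref{lem:mult-aux} says $y-x=k_\Gamma/n=2+\frac{2}{m-2}$ is an integer, so $m-2\in\{1,2\}$, that is $m\in\{3,4\}$. Since the constructed graph is biregular by hypothesis and its nonprincipal quotient eigenvalue is $n$, relation \eqref{eqn:mult-bireg-branch} applies. With $s=1$ and $\lambda_2=n$ it reads $m(m-1)=n(n-1)(m-2)$.

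For $m=3$ this gives $n(n-1)=6$, so $n=3$ and $k_\Gamma=12$. Then $v-n+r=6+9=15=k_\Gamma+k$, so the graph is regular, contradicting the hypothesis. This is the discarded $(36,15,6,6)$ case noted in Section~\ref{sub:feasible}.

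For $m=4$ we get $2n(n-1)=12$, so $n=3$, $k_\Gamma=9$ and $b=27$. The formula for $m$ and the degree condition \eqref{eqn:degree-compatibility} both turn out to be consistent here, so they give no contradiction. Instead I would use the multiplicities from Lemma~\ref{lem:mult-aux}:
\[
x+y=b-1-m(n-1)=18,\qquad y-x=k_\Gamma/n=3.
\]
These have different parity, so $x$ and $y$ cannot both be integers, which is a contradiction. This excludes the second branch entirely and leaves only the empty auxiliary graph.

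The expected obstacle is this $m=4$ case, because the more obvious scalar checks all pass there. The parity of $x$ and $y$ is what I would try first to exclude it.
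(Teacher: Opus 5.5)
Your proof is correct and matches the paper's argument: the same substitution into \eqref{eqn:params-ns}, the same two branches from Lemma~\ref{lem:mult-aux}, $m=3$ excluded because the graph would be regular, and $m=4$ excluded by the parity clash between $x+y=18$ and $y-x=3$. The only difference is how you reach $m\in\{3,4\}$. You use integrality of $y-x=k_\Gamma/n=2+2/(m-2)$ and then read off $n=3$ from \eqref{eqn:mult-bireg-branch}. The paper instead shows that the discriminant $(n^2-n-3)^2-8$ of the quadratic in $m$ is a square only when $n=3$. Your route is slightly shorter and avoids the squeeze between consecutive squares.
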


\begin{proof}
Using \eqref{eqn:params-ns}, we find that $\lambda_1=0$, $\lambda_2=n$, $r=n^2$, $k=m$ and $b=n^3$.
Suppose that $k_\Gamma\ne0$.  
By Lemma~\ref{lem:mult-aux}, we have $m>2$ and $k_\Gamma=2n(m-1)/(m-2)$, and
\eqref{eqn:mult-bireg-branch} becomes
\begin{equation}\label{eqn:s1-factor}
    m^2-(n^2-n+1)m+2n(n-1)=0.
\end{equation}
Its discriminant is $(n^2-n-3)^2-8$.  
This is negative for $n=2$; for $n\geqslant4$ it lies strictly between the consecutive squares $(n^2-n-4)^2$ and $(n^2-n-3)^2$.
Thus $n=3$, and \eqref{eqn:s1-factor} implies $m=3$ or $4$.
If $m=3$, then $k_\Gamma=12$ and both full-graph degrees are $15$, contrary to biregularity.  
If $m=4$, then $k_\Gamma=9$; by Lemma~\ref{lem:mult-aux}, we have $x+y=18$ and $y-x=3$, contrary to integrality of the multiplicities.

Hence $k_\Gamma=0$.  
By Lemma~\ref{lem:mult-aux}, we have $m=(n^3-1)/(n-1)=n^2+n+1$.
Substitution provides the remaining parameters.
\end{proof}

The fifth row of Table~\ref{tab:feasible-parameters} has $n=s^2$.
We impose this relation to obtain parameters suited to an affine-plane construction.  
Taking $n=s^2$ in \eqref{eqn:params-ns} yields $\lambda_1=s^3$, $\lambda_2=s^2(s+1)$, $r=s^3(s+1)$, $k=sm$, and $b=s^4(s+1)$.
The next proposition shows that biregularity determines both $m$ and the auxiliary-graph degree, and hence its spectrum.

\begin{proposition}
\label{prop:kG-forced}
    Let $s\geqslant2$ be an integer, and let $\mathcal D$ be a group divisible design with parameters
    \[
(v,b;r,k;\lambda_1,\lambda_2;m,n)  = (s^2m,\ s^4(s+1);\ s^3(s+1),\ sm;\ s^3,\ s^2(s+1);\ m,\ s^2).
\]
    Suppose $\Gamma$ is an auxiliary graph of $\mathcal D$ and $k_\Gamma\ne s(s-1)m+s^2(s^2+s-1)$.
    Then $k_\Gamma = \lambda_1 = s^3$, $m = s^3+1$, and
    \[
        \Spec(A(\Gamma)) = \Big\{ [s^3]^1,\ [0]^{(s^3+1)(s^2-1)},\
        [s^2]^{\frac{s(s+1)(s^2-1)}{2}},\ [-s^2]^{\frac{s(s+1)(s^2-1)}{2}+s} \Big\} .
    \]
\end{proposition}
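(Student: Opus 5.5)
The plan is to reduce everything to two polynomial identities in the unknowns $m$ and $k_\Gamma$, and then read off the spectrum from Lemma~\ref{lem:mult-aux}. The parameters satisfy $\lambda_2-\lambda_1=s^2=n$, so $\mathcal D$ is multiplicative, with $\theta_1=s^2$ and $\theta_2=-s^2$. Also $k/v=1/s$. Hence Lemma~\ref{lem:mult-aux} and Remark~\ref{rem:degree-compatibility} both apply.

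First I use the degree-compatibility identity \eqref{eqn:degree-compatibility}. With $c\coloneqq s^2(m+s-1)=v-n+\lambda_1$ it reads
\[
    (c+k_\Gamma-s^3)^2 = s^2c\,(k_\Gamma+sm-s^4).
\]
I view this as a quadratic in $k_\Gamma$ whose coefficients are polynomials in $s$ and $m$. I expect it to factor over $\ZZ[s,m]$, with one root equal to the excluded value $s(s-1)m+s^2(s^2+s-1)$; checking this factorisation is the first computation. The hypothesis then leaves only the other root, an explicit polynomial $k_\Gamma=\kappa(s,m)$.

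Next I substitute $k_\Gamma=\kappa(s,m)$ into the formula for $m$ from Lemma~\ref{lem:mult-aux},
\[
    n^2(n-1)m=k_\Gamma^2-bk_\Gamma+n^2(b-1),
\]
with $n=s^2$ and $b=s^4(s+1)$. This gives a polynomial equation in $m$ for fixed $s$. I expect it to have $m=s^3+1$ as its only admissible root. As a sanity check, $k_\Gamma=s^3$ and $m=s^3+1$ do satisfy both identities. Indeed, degree-compatibility reduces to $m+s-1=s^3+sm-s^4$, that is $(1-s)m=(1-s)(s^3+1)$. The $m$-formula gives $(s^9-s^7+s^6-s^4)/(s^4(s^2-1))=s^3+1$.

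The main obstacle is excluding any spurious roots. If the degree-compatibility quadratic does not split cleanly, I would fall back on the dichotomy of Lemma~\ref{lem:mult-aux}. The branch $k_\Gamma=\lambda_1=s^3$ gives $m=s^3+1$ directly from the $m$-formula. In the other branch, $m>2$ and $k_\Gamma=s^2+s^2(s+1)m/(m-2)$. Substituting this into degree-compatibility (or into \eqref{eqn:mult-bireg-branch}) and clearing the factor $m-2$ should leave a polynomial in $m$. I then need to show that every positive integer root either makes $k_\Gamma$ equal the excluded value, or violates integrality, for instance of $y-x=k_\Gamma/n$ or of $x$ and $y$ themselves. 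This is in the style of the argument in Proposition~\ref{prop:s-one}, where the discriminant was squeezed between consecutive squares.

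Finally, with $k_\Gamma=s^3$ and $m=s^3+1$, Lemma~\ref{lem:mult-aux} gives the eigenvalues $s^3,0,s^2,-s^2$, with $m(n-1)=(s^3+1)(s^2-1)$. It also gives
\[
    x+y=s^5+s^4-1-(s^5-s^3+s^2-1)=s^4+s^3-s^2
    \quad\text{and}\quad
    y-x=s^3/s^2=s.
\]
The latter is nonnegative and integral as required. Solving these two equations gives $x=\tfrac{s(s+1)(s^2-1)}{2}$ and $y=x+s$, which is the stated spectrum.
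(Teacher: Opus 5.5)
Your main route fails at its first step: the factorisation you expect does not exist. Viewed as a quadratic in $k_\Gamma$, \eqref{eqn:degree-compatibility} has discriminant $s^5(m+s-1)\bigl(m(s^3-4s+4)+s(s^3-5s^2+4)\bigr)$. For $s=2$ this is $128(m+1)(m-4)$, which is not a square in $\ZZ[s,m]$. Neither candidate is an identical root either. Your own sanity check shows that $k_\Gamma=s^3$ satisfies the identity only when $m=s^3+1$. The excluded value satisfies it only when $(s-1)m=2s$. So there is no root $\kappa(s,m)$ to feed into the $m$-formula. The degree-compatibility identity alone also does not see the quotient-eigenvalue dichotomy, and that dichotomy is what actually splits the problem.

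Your fallback is the right route and is essentially the paper's proof, but it stops where the work is. In the branch $k_\Gamma=s^2+s^2(s+1)m/(m-2)$, substitute into \eqref{eqn:mult-bireg-branch}. This gives $m^2-(s^3-2s+1)m+2s(s^2-1)=0$. The substitution is legitimate because the excluded value is exactly the case $v-n+r=k_\Gamma+k$, so the hypothesis makes the graph biregular and Theorem~\ref{thm:bireg}\,(iii) applies. If you substitute into \eqref{eqn:degree-compatibility} instead, you get
\[ \bigl((s-1)m-2s\bigr)\bigl(m^2-(s^3-2s+1)m+2s(s^2-1)\bigr)=0. \]
Here the linear factor forces $k_\Gamma=s^4+s^3+s^2$, which is precisely the excluded value at $m=2s/(s-1)$. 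That variant uses the hypothesis directly and avoids Theorem~\ref{thm:bireg}. Either way, the missing step is to show that the quadratic has no integer root. Its discriminant $D_s=s^6-4s^4-6s^3+4s^2+4s+1$ is negative for $s=2$. For $s\geqslant3$ it satisfies $(s^3-2s-4)^2<D_s<(s^3-2s-3)^2$, since the two differences are $2s^3-12s-15>0$ and $8(s+1)>0$. Integrality of $x$, $y$ or $y-x$ is not what rules this branch out. Your treatment of the branch $k_\Gamma=s^3$, giving $m=s^3+1$, and your multiplicity computation are correct.
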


\begin{proof}
Suppose first that the nonprincipal quotient eigenvalue is $s^2$.
Taking $n=s^2$ and $\lambda_2=s^2(s+1)$ in \eqref{eqn:mult-bireg-branch}, we have $m^2-(s^3-2s+1)m+2s(s^2-1)=0$.
The discriminant is $D_s=s^6-4s^4-6s^3+4s^2+4s+1$.
For $s=2$ it is negative.  
For $s\geqslant3$, we have $(s^3-2s-4)^2<D_s<(s^3-2s-3)^2$, since the differences are $2s^3-12s-15>0$ and $8(s+1)>0$, respectively. 
Thus $D_s$ is not a square, contradicting $m\in\ZZ$.

The nonprincipal quotient eigenvalue is therefore $-s^2$.
By Remark~\ref{rem:degree-compatibility}, $k_\Gamma=\lambda_2-s^2=s^3$.
By Lemma~\ref{lem:mult-aux}, we have $m=s^3+1$ and
\[
    x+y=s^2(s^2+s-1),\qquad y-x=s.
\]
These yield the stated spectrum.
\end{proof}

For the construction, take $s=q$, where $q$ is a prime power.
By Proposition~\ref{prop:kG-forced}, we seek a GDD with parameters
\begin{equation*}
    (v,b;r,k;\lambda_1,\lambda_2;m,n)
    =\big(q^2(q^3+1),\ q^4(q+1);\ q^3(q+1),\ q(q^3+1);\
    q^3,\ q^2(q+1);\ q^3+1,\ q^2\big)
\end{equation*}
and an auxiliary graph with spectrum $\big\{[q^3]^1,[0]^{(q^2-1)(q^3+1)},[q^2]^p,[-q^2]^{p+q}\big\}$, where $p=q(q+1)(q^2-1)/2$.

Muzychuk~\cite{Muzychuk2007} provides candidates for the auxiliary graphs.

%% ============================================================================
\section{An infinite family from Muzychuk's Construction A}
\label{sec:design}

\subsection{Affine planes and parallel classes}
\label{sub:muzychuk-affine-data}

For a prime power $q$, the affine plane $\operatorname{AG}(2,q)$ has point set $\FF_q^2$ and lines the cosets of its one-dimensional subspaces.
The lines with a fixed direction form a \textbf{parallel class}: they partition the point set into $q$ lines of size $q$.
There are $q+1$ parallel classes.  
Lines from distinct classes meet in exactly one point, and two distinct points lie on exactly one line.
These facts follow respectively from the direct-sum decomposition of $\FF_q^2$ by two distinct one-dimensional subspaces and from the direction spanned by the difference of two points.

The parallel classes can be described by linear forms.  
Choose nonzero linear forms $\ell_t:\FF_q^2\to\FF_q$, indexed by $t\in\ZZ_{q+1}$, no two of which are proportional.
For example, enumerate $\FF_q=\{\alpha_0,\ldots,\alpha_{q-1}\}$ and let $\ell_q(x,y)=y$ and, for each $t \in \{0,\dots,q-1\}$, let $\ell_t(x,y)=x+\alpha_t y$.
For each $t$ and $s\in\FF_q$, the fibres $\ell_t^{-1}(s)=\{w\in\FF_q^2:\ell_t(w)=s\}$ are the cosets of the one-dimensional subspace $\ker\ell_t$, and hence form a parallel class of $\operatorname{AG}(2,q)$.
Nonproportional forms have distinct kernels, so these give all $q+1$ parallel classes.

\subsection{Muzychuk's Construction A}
\label{sub:muzychuk-A-overview}\label{sub:aux}

Let $\mathcal P$ be the set of the $q^2+1$ parallel classes of $\operatorname{AG}(2,q^2)$ on $\FF_{q^2}^2$.
For each $a\in\ZZ_{q+1}$, define $\Omega_a:=\{a\}\times\FF_{q^2}^2$.
Thus, each $\Omega_a$ is a labelled copy of the affine plane of order $q^2$; the plane $\operatorname{AG}(2,q)$ introduced above will be used separately to index unions of lines.
For each $a\in\ZZ_{q+1}$, independently choose a set $\mathcal S_a\subseteq\mathcal P$ of $q$ parallel classes and a bijection $\ZZ_{q+1}\setminus\{a\}\longrightarrow\mathcal S_a$ with $b\longmapsto P_{ab}$, where $\mathcal S_a = \{ P_{ab} \; : \; \ZZ_{q+1}\setminus\{a\} \}$.
Write $\mathcal F_a=\mathcal P\setminus\mathcal S_a$.
Following Muzychuk~\cite{Muzychuk2007}, we call parallel classes in $\mathcal F_a$ \textbf{free classes}.
For $a\ne b$, we define
\[
C_{ab}:=\left\{\{a\}\times L:L\in P_{ab}\right\}.
\]
For each unordered pair $\{a,b\}$, choose a bijection $\sigma_{ab}:C_{ab}\longrightarrow C_{ba}$, and let $\sigma_{ba}=\sigma_{ab}^{-1}$.

Define $G_q$ on $\bigcup_a\Omega_a$ as follows.
Each $\Omega_a$ is an independent set.  
For each pair of distinct $a,b$, every point of each $L\in C_{ab}$ is adjacent to every point of $\sigma_{ab}(L)$.
Write $A(G_q)$ for the adjacency matrix of $G_q$.
The graph $G_q$ is an instance of Muzychuk's Construction~A with base graph $K_{q+1}$ and planes $\operatorname{AG}(2,q^2)$ \cite[Section~3.1]{Muzychuk2007}.

For each $a\in\ZZ_{q+1}$ and parallel class $C\in\mathcal P$, define
\[
    U_a(C):=\operatorname{span}_{\RR}
       \left\{
           \mathbf1_{\{a\}\times L}
           -\frac1{q^2}\mathbf1_{\Omega_a}:L\in C
       \right\},
\]
where the indicator vectors are indexed by $V(G_q)$.
Thus $U_a(C)$ consists of the vectors supported on $\Omega_a$ that are constant on each line of the copied parallel class and whose coordinates sum to zero.

\begin{lemma}[{\cite[Proposition~3.1(a)]{Muzychuk2007}}]
\label{cor:muzychuk-affine-planes}
Let $q$ be a prime power and $p=q(q+1)(q^2-1)/2$.
Then
\[
    \Spec(A(G_q))
      =\big\{[q^3]^1,[0]^{(q^2-1)(q^3+1)},
                   [q^2]^p,[-q^2]^{p+q}\big\}.
\]
Moreover,
\[
    \ker A(G_q)
      =\bigoplus_{a\in\ZZ_{q+1}}
         \ \bigoplus_{C\in\mathcal F_a}U_a(C).
\]
\end{lemma}

% \begin{proof}
% Each vertex has $q^2$ neighbours in each of the other $q$ copies, so
% $G_q$ is $q^3$-regular.  In a fixed copy, the centred indicators of
% the $q^2$ disjoint lines of a class have Gram matrix $q^2I-J$.
% Thus $\dim U_a(C)=q^2-1$.  The spaces belonging to distinct classes
% are orthogonal, because two such lines meet in one point.
% Together with the constant space, they exhaust $\RR^{\Omega_a}$:
% \[
%     1+(q^2+1)(q^2-1)=q^4.
% \]

% On the space of vectors constant on each copy, $\mathsf A_1$ acts
% as $q^2(J_{q+1}-I_{q+1})$, giving eigenvalues $q^3$ once and
% $-q^2$ with multiplicity $q$.
% A vector in $U_a(C)$ sums to zero on every line of every other
% parallel class.  Consequently, $\mathsf A_1$ annihilates every
% free-class space.

% For each unordered pair $\{a,b\}$, the space
% $U_a(P_{ab})\oplus U_b(P_{ba})$ is invariant under $\mathsf A_1$.
% Indeed, if a vector takes values $c_L$ on the lines $L\in C_{ab}$,
% where $\sum_L c_L=0$, its image takes the value $q^2c_L$ on
% $\sigma_{ab}(L)$ in copy $b$ and is zero in every other copy.
% The inverse matching gives the reverse action.
% Thus this invariant space has eigenvalues $q^2$ and $-q^2$, each
% with multiplicity $q^2-1$.
% There are $q(q+1)/2$ pairs and
% $(q+1)(q^2-q+1)=q^3+1$ free classes in total.
% These mutually orthogonal spaces exhaust the whole vertex space,
% giving the spectrum and nullspace.  Finally, a regular graph has as
% many connected components as the multiplicity of its degree, so the
% simple eigenvalue $q^3$ proves connectedness.
% \end{proof}

Note that each summand in the orthogonal direct sum for $\ker A(G_q)$ has dimension $q^2-1$.
In particular, the nullspace depends only on the used classes, and not on their assignments or the matching bijections.

\subsection{Incidence rows from the nullspace}
\label{sub:incidence-classes}

Now we build a GDD from the nullspace of $A(G_q)$.
By Lemma~\ref{cor:muzychuk-affine-planes} if $H\subseteq\Omega_a$ is a union of $q$ copied lines of a free class $C\in\mathcal F_a$, then
\begin{equation*}
    \mathbf1_H-\frac1q\mathbf1_{\Omega_a}
       \in U_a(C)\subseteq\ker A(G_q).
\end{equation*}
We choose such unions in each copy to obtain binary incidence rows, with their intersections prescribed by the following criterion.
Set $m=q^3+1$, $v=mq^2$, and $b=q^4(q+1)$.
The vertices of $G_q$ index the blocks of the required GDD.

\begin{lemma}\label{lem:kernel-completion}
Let $B$ be a $v\times b$ $\{0,1\}$-matrix such that
\begin{align}
    \left(B-\frac1qJ\right)A(G_q)&=O,\label{eqn:family-centred-rows}\\
    \left(B-\frac1qJ\right)\left(B-\frac1qJ\right)^\transpose
       &=q^4I_v-q^2(I_m\otimes J_{q^2}).
        \label{eqn:target-centred-gram}
\end{align}
Then $B$ is the incidence matrix of a semiregular GDD $\mathcal D$ with parameters
\[
    (v,b;r,k;\lambda_1,\lambda_2;m,n)
    =\big(q^2(q^3+1),\ q^4(q+1);\ q^3(q+1),\ q(q^3+1);\
    q^3,\ q^2(q+1);\ q^3+1,\ q^2\big).
\]
Moreover, $G_q$ is an auxiliary graph for $\mathcal D$.
% \begin{equation}\label{eqn:forced-incidence}
%     BA(G_q)=q^2J,\qquad B^\transpose B+A(G_q)^2=q^4I_b+(q^3+q)J.
% \end{equation}
\end{lemma}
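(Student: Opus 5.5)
The plan is to translate the two hypotheses on the centred matrix $M\coloneqq B-\frac1qJ$ into the defining identities of a GDD, and then to check the three conditions of Theorem~\ref{thm:gdd-to-3ev} directly. The main input is the spectrum and nullspace of $A(G_q)$ from Lemma~\ref{cor:muzychuk-affine-planes}.

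\emph{Step 1: row and column sums.} Since $G_q$ is $q^3$-regular, applying \eqref{eqn:family-centred-rows} to $\mathbf1_b$ gives $q^3M\mathbf1_b=0$. Hence $M\mathbf1_b=0$, so every row of $B$ sums to $b/q=q^3(q+1)=r$. For the columns, multiplying \eqref{eqn:target-centred-gram} on both sides by $\mathbf1_v$ gives
\[
\|M^\transpose\mathbf1_v\|^2=q^4v-q^2\cdot q^2v=0 .
\]
Hence $M^\transpose\mathbf1_v=0$, and every column of $B$ sums to $v/q=q(q^3+1)=k$.

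\emph{Step 2: the GDD structure.} Expand $BB^\transpose=MM^\transpose+\frac1q(M J+JM^\transpose)+\frac{b}{q^2}J$. By Step~1 the middle terms vanish, so
\[
BB^\transpose=q^4I-q^2(I_m\otimes J_{q^2})+q^2(q+1)J .
\]
Reading off the entries gives diagonal $q^4+q^3=r$, within-group entries $q^3=\lambda_1$, and entries between groups $q^2(q+1)=\lambda_2$. This is exactly \eqref{eqn:inc}. Together with $B^\transpose\mathbf1=k\mathbf1$, this makes $B$ the incidence matrix of a GDD with the stated parameters. Semiregularity is the arithmetic check $r>\lambda_1$ and $rk=q^4(q+1)(q^3+1)=v\lambda_2$.

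\emph{Step 3: the auxiliary conditions.} Here $\lambda_2-\lambda_1-n=0$ and $k_\Gamma=q^3$. The condition \eqref{eqn:aux-mixed} becomes $BA(G_q)=\frac{k}{v}q^3J=q^2J$, and this follows from $BA(G_q)=\frac1qJA(G_q)=q^2J$.

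For \eqref{eqn:aux-block}, I first write $B^\transpose B=M^\transpose M+\frac{v}{q^2}J$, again using Step~1. The nonzero spectrum of $M^\transpose M$ equals that of $MM^\transpose$, which is $q^4$ with multiplicity $m(q^2-1)$. So $M^\transpose M=q^4P$, where $P$ is the orthogonal projector onto $\operatorname{row}(M)$. Transposing \eqref{eqn:family-centred-rows} gives $\operatorname{row}(M)\subseteq\ker A(G_q)$. By Lemma~\ref{cor:muzychuk-affine-planes} both spaces have dimension $(q^3+1)(q^2-1)$, so $P$ is the projector onto $\ker A(G_q)$.

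The same lemma gives
\[
A(G_q)^2=q^4(I-P)+\frac{q^6-q^4}{b}J ,
\]
since $A(G_q)^2$ is $q^6$ on $\mathbf1$, $0$ on the kernel, and $q^4$ elsewhere. Adding the two expressions,
\[
A(G_q)^2+B^\transpose B=q^4I+\Bigl(\frac{q^6-q^4}{b}+\frac{v}{q^2}\Bigr)J .
\]
It remains to verify that this scalar equals $\frac1b\bigl((q^3-q^2)(q^3+q^2)+rk\bigr)$ and that $n(\lambda_2-\lambda_1)=q^4$. The scalar condition \eqref{eqn:aux-scalar} is then a direct numerical check in $q$.

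I expect the only genuinely non-routine point to be the identification $P=$ projector onto $\ker A(G_q)$ in Step~3. It rests on matching the rank of $M$, which is read off from \eqref{eqn:target-centred-gram}, with the nullity of $A(G_q)$ from Lemma~\ref{cor:muzychuk-affine-planes}. Everything else is bookkeeping.
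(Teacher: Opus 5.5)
Your proposal is correct and follows essentially the same route as the paper: row sums from regularity, column sums from the centred Gram identity, and the key step of identifying the row space of $B-\frac1qJ$ with $\ker A(G_q)$ by matching the rank $m(q^2-1)$ with the nullity, which gives $B^\transpose B=q^4E_{\ker}+(q^3+1)J$. The only difference is that you test the Gram identity against $\mathbf1_v$ rather than against each group indicator. This yields the column sums, which is all the lemma needs, but not the stronger fact $(I_m\otimes J_{q^2})B=qJ$ that the paper records.
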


\begin{proof}
Since $A(G_q)\mathbf1=q^3\mathbf1$, \eqref{eqn:family-centred-rows} implies $\left(B-\frac1qJ\right)\mathbf1=0$, and hence $B\mathbf1=q^3(q+1)\mathbf1$.
For the indicator $\mathbf g$ of any one of the $m$ groups, \eqref{eqn:target-centred-gram} implies
\[
    \left\|\left(B-\frac1qJ\right)^\transpose\mathbf g\right\|^2=0.
\]
Hence,
\[
    (I_m\otimes J_{q^2})B=qJ,
    \qquad B^\transpose\mathbf1=qm\mathbf1.
\]
Furthermore, expanding \eqref{eqn:target-centred-gram} using the row sums of $B$ yields
\[
    BB^\transpose
      =q^4I_v-q^2(I_m\otimes J_{q^2})+(q^3+q^2)J.
\]
These identities give the claimed GDD parameters, which satisfy $r>\lambda_1$ and $rk=v\lambda_2$.
Thus $\mathcal D$ is semiregular.

Let $E_{\ker}$ be the orthogonal projection onto $\ker A(G_q)$.
By \eqref{eqn:target-centred-gram},
\[
    \left(B-\frac1qJ\right)
    \left(B-\frac1qJ\right)^\transpose
      =q^4\left(I_v-\frac1{q^2}(I_m\otimes J_{q^2})\right).
\]
The matrix in parentheses is an orthogonal projection of rank $m(q^2-1)$.  
Consequently,
\[
    \rank\left(B-\frac1qJ\right)=m(q^2-1),
\]
and all nonzero squared singular values of $B-\frac1qJ$ equal $q^4$.
By \eqref{eqn:family-centred-rows} and the symmetry of $A(G_q)$, the row space of $B-\frac1qJ$ is contained in $\ker A(G_q)$.
By Lemma~\ref{cor:muzychuk-affine-planes}, we have $\dim\ker A(G_q)=m(q^2-1)$.
Thus, $\ker A(G_q)$ is equal to the row space of $B-\frac1qJ$.
It follows that
\[
    \left(B-\frac1qJ\right)^\transpose
    \left(B-\frac1qJ\right)=q^4E_{\ker}.
\]
Since $B^\transpose\mathbf1=qm\mathbf1$ and $v=mq^2$, we have $\left(B-\frac1qJ\right)^\transpose\mathbf1=0$.
Hence,
\[
    B^\transpose B
      =\left(B-\frac1qJ\right)^\transpose
         \left(B-\frac1qJ\right)+\frac{v}{q^2}J
      =q^4E_{\ker}+(q^3+1)J.
\]

By Lemma~\ref{cor:muzychuk-affine-planes}, we have
\[
    A(G_q)^2=q^4(I_b-E_{\ker})+(q-1)J.
\]
Together with \eqref{eqn:family-centred-rows} and the $q^3$-regularity of $G_q$, these identities imply
\[
    BA(G_q)=q^2J,
    \qquad
    B^\transpose B+A(G_q)^2=q^4I_b+(q^3+q)J.
\]
Here $\lambda_2-\lambda_1=n=q^2$, so these are precisely \eqref{eqn:aux-mixed} and \eqref{eqn:aux-block}.
Finally, \eqref{eqn:aux-scalar} becomes
\[
    (q^4+q^2)^2=(q^5+q^3)(q^3+q),
\]
which is satisfied.
Thus $G_q$ is an auxiliary graph for $\mathcal D$.
\end{proof}

Now we are ready to construct a GDD from $A(G_q)$.
We note in passing that a similar approach to constructing designs has been used earlier by Shrikhande and Raghavarao~\cite{ShrikhandeRaghavarao1963}, with $\operatorname{AG}(2,q)$ on the line labels as indexing design; see also \cite[Construction~1]{StinsonSwansonTrung2013}.

We seek vectors in $\ker A(G_q)$ whose entries belong to $\{1-\frac1q,-\frac1q\}$.  
Adding $\frac1q\mathbf1$ to each vector yields a binary vector, whose transpose can serve as an incidence row.

For each $a\in\ZZ_{q+1}$, choose $q$ distinct lines $L_{a,1},\ldots,L_{a,q}$ belonging to a single parallel class in $\mathcal F_a$, and put
\[
    \mathcal L_a=\bigcup_{j=1}^q\bigl(\{a\}\times L_{a,j}\bigr)
       \subseteq\Omega_a.
\]
By Lemma~\ref{cor:muzychuk-affine-planes}, the vector
\[
    \mathbf z=\sum_{a\in\ZZ_{q+1}}
       \left(\mathbf1_{\mathcal L_a}-\frac1q\mathbf1_{\Omega_a}\right)
\]
belongs to $\ker A(G_q)$.
Since the sets $\Omega_a$ partition $V(G_q)$, every entry of $\mathbf z$ is either $1-\frac1q$ or $-\frac1q$.
We now construct $m=q^3+1$ groups of $q^2$ such vectors, with the inner products prescribed by \eqref{eqn:target-centred-gram}.

Recall that, for each $t\in\ZZ_{q+1}$, the preimages $\ell_t^{-1}(s)$, with $s\in\FF_q$, form a parallel class of $\operatorname{AG}(2,q)$.
As $t$ varies, these give all $q+1$ parallel classes.
We use these partitions in two ways: first to group the lines of each free class into unions of $q$ lines, and then to select one such union for each $a$ and each row.

For each $a$, enumerate its free classes independently as $\mathcal F_a=\{C_{a,c}:c\in\{1,\ldots,q^2-q+1\}\}$.
For each $a\in\ZZ_{q+1}$ and $c\in\{1,\ldots,q^2-q+1\}$, choose a bijection $\eta_{a,c}:C_{a,c}\longrightarrow\FF_q^2$, and let $C_{a,c}(u)$ denote the unique line of $C_{a,c}$ containing $u\in\FF_{q^2}^2$.
For each $t\in\ZZ_{q+1}$, define
\[
    h_{c,a,t}(u) :=\ell_t\bigl(\eta_{a,c}(C_{a,c}(u))\bigr).
\]
For fixed $a,c,t$ and each $s\in\FF_q$, we have
\[
    \{a\}\times h_{c,a,t}^{-1}(s)
      =\bigsqcup_{\substack{L\in C_{a,c}\\
                           \ell_t(\eta_{a,c}(L))=s}}
         \bigl(\{a\}\times L\bigr).
\]
Since $\eta_{a,c}$ is a bijection and $\ell_t^{-1}(s)$ has $q$ elements, this is a union of exactly $q$ sets of the form $\{a\}\times L$.  
Hence, we can partition $\Omega_a$ as
\[
    \Omega_a
      =\bigsqcup_{s\in\FF_q}
         \bigl(\{a\}\times h_{c,a,t}^{-1}(s)\bigr).
\]

For each fixed $(c,t)$, label the rows by $w\in\FF_q^2$ and use $\ell_a(w)$ to select the set $\{a\}\times h_{c,a,t}^{-1}(\ell_a(w))$ in $\Omega_a$.
This ensures that two distinct rows in the same group select the same subset of $\Omega_a$ for exactly one value of $a$.
Indeed, two distinct points of $\operatorname{AG}(2,q)$ lie on a common line in exactly one parallel class.  
Thus, for distinct $w,w'\in\FF_q^2$,
\begin{equation}\label{eqn:unique-agreement}
    \big|\{a\in\ZZ_{q+1}:\ell_a(w)=\ell_a(w')\}\big|=1.
\end{equation}

Accordingly, for each $(c,t,w)\in\{1,\ldots,q^2-q+1\}\times\ZZ_{q+1}\times\FF_q^2$, define the vector $\mathbf z_{c,t,w}$ by
\[
    \mathbf z_{c,t,w}(a,u)=
    \begin{cases}
        1-\frac1q,&\ell_a(w)=h_{c,a,t}(u),\\[2pt]
        -\frac1q,&\text{otherwise}.
    \end{cases}
\]

Define the $\{0,1\}$-matrix $M_q$ by
\[
    [M_q]_{(c,t,w),(a,u)}
       =\mathbf z_{c,t,w}(a,u)+\frac1q,
\]
with rows having the same pair $(c,t)$ placed consecutively.
The row of $M_q-\frac1qJ$ indexed by $(c,t,w)$ is $\mathbf z_{c,t,w}^\transpose$.
By Lemma~\ref{cor:muzychuk-affine-planes},
\[
    \mathbf z_{c,t,w}\in
       \bigoplus_{a\in\ZZ_{q+1}}U_a(C_{a,c})
       \subseteq\ker A(G_q).
\]
Since $A(G_q)$ is symmetric, it follows that
\begin{equation}\label{eqn:constructed-centred-rows}
    \left(M_q-\frac1qJ\right)A(G_q)=O.
\end{equation}

Regard the rows as points and the columns as blocks.
The point set $\{1,\ldots,q^2-q+1\}\times\ZZ_{q+1}\times\FF_q^2$
is partitioned into the groups
\[
    \mathfrak P_{c,t}=\{(c,t,w):w\in\FF_q^2\},
\]
each of size $q^2$.
There are $(q^2-q+1)(q+1)=m$ groups.
We obtain the block
\begin{equation*}
    \mathfrak B_{a,u}
       =\bigsqcup_{\substack{1\leqslant c\leqslant q^2-q+1\\
                            t\in\ZZ_{q+1}}}
          \bigl(\{c\}\times\{t\}\times
             \ell_a^{-1}(h_{c,a,t}(u))\bigr)
\end{equation*}
from the column of $M_q$ indexed by $(a,u)$.
Since each preimage $\ell_a^{-1}(s)$ has $q$ elements, every block meets each group in exactly $q$ points and therefore has size $qm=q(q^3+1)$.

We now claim that
\begin{equation}\label{eqn:constructed-centred-gram}
    \left(M_q-\frac1qJ\right)
    \left(M_q-\frac1qJ\right)^\transpose
       =q^4I_v-q^2(I_m\otimes J_{q^2}).
\end{equation}
To prove this, we compute the inner products of the rows of $M_q$.
For each $a\in\ZZ_{q+1}$, every row has $q^3$ ones among the coordinates indexed by $\Omega_a$, so its total weight is $(q+1)q^3$.
By \eqref{eqn:unique-agreement}, two distinct rows in the same group select the same subset of $\Omega_a$ for exactly one value of $a$, and disjoint subsets for each of the remaining $q$ values.  
Their inner product is therefore $q^3$.

For $(c,t)\ne(d,s)$, any preimage of a value under $h_{c,a,t}$ meets any preimage of a value under $h_{d,a,s}$ in $q^2$ points.
Indeed, if $c=d$, then $t\ne s$, and the two nonproportional linear forms specify a unique line label under $\eta_{a,c}$.
The intersection is therefore one line of $C_{a,c}$, of size $q^2$.
If $c\ne d$, the classes $C_{a,c}$ and $C_{a,d}$ are distinct.
Each of the $q$ selected lines of $C_{a,c}$ meets each of the $q$ selected lines of $C_{a,d}$ once, with all $q^2$ intersection points distinct.  
Summing over $a$, rows in distinct groups have inner product $(q+1)q^2$.

Since every row has weight $(q+1)q^3$ and there are $b=(q+1)q^4$ columns,
\[
    \left(M_q-\frac1qJ\right)
    \left(M_q-\frac1qJ\right)^\transpose
       =M_qM_q^\transpose-(q+1)q^2J_v.
\]
The preceding inner products therefore give diagonal entries $q^4-q^2$, entries $-q^2$ for distinct rows in the same group, and zero entries between groups.
This proves \eqref{eqn:constructed-centred-gram}.

% \subsection{The resulting graphs}
% \label{sub:family-second-identity}\label{sub:family-graphs}

By Lemma~\ref{lem:kernel-completion}, applied to \eqref{eqn:constructed-centred-rows} and \eqref{eqn:constructed-centred-gram}, the matrix $M_q$ is the incidence matrix of a semiregular GDD $\mathcal D_q$ with the stated parameters, and $G_q$ is an auxiliary graph of $\mathcal D_q$.

Write $\Gamma_q$ for the graph with adjacency matrix $\left[\begin{smallmatrix}X_{\mathcal D_q}&M_q\\
M_q^\transpose&A(G_q)\end{smallmatrix}\right]$.
The results above imply the following corollary.

\begin{corollary}\label{cor:main-three}
Let $q$ be a prime power.  
Then $\Gamma_q$ is a connected biregular graph on $2q^5+q^4+q^2$ vertices with degrees $q^5+q^4+q^3$ and $q^4+q^3+q$.  
Its spectrum is
\[
    \{[q^5+q^3+q^2]^1,[q^2]^{f_q},[-q^2]^{g_q}\},
\]
where $f_q=(q-1)(q+1)^2(2q^2-q+2)/2$ and $g_q=2q^5+q^4+q^2-1-f_q$.
\end{corollary}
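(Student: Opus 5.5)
The plan is to apply Theorem~\ref{thm:gdd-to-3ev} to the design $\mathcal D_q$ with incidence matrix $M_q$ and auxiliary graph $G_q$. By Lemma~\ref{lem:kernel-completion}, applied with \eqref{eqn:constructed-centred-rows} and \eqref{eqn:constructed-centred-gram}, $M_q$ is the incidence matrix of a semiregular GDD with
\[
(v,b;r,k;\lambda_1,\lambda_2;m,n)=\bigl(q^2(q^3+1),q^4(q+1);q^3(q+1),q(q^3+1);q^3,q^2(q+1);q^3+1,q^2\bigr),
\]
and $G_q$ is a $q^3$-regular auxiliary graph satisfying \eqref{eqn:aux-mixed}, \eqref{eqn:aux-block} and \eqref{eqn:aux-scalar}. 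Since $m=q^3+1\geqslant 2$ and $n=q^2\geqslant 2$, Theorem~\ref{thm:gdd-to-3ev} shows that $\Gamma_q$ is connected and lies in $\mathscr G_3(\theta_0,\lambda_2-\lambda_1,-n)=\mathscr G_3(\theta_0,q^2,-q^2)$. The vertex count is $v+b=q^5+q^2+q^5+q^4$.

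Next I would compute the two degrees, which are the row sums of the quotient matrix $Q=\left[\begin{smallmatrix}v-n&r\\k&k_\Gamma\end{smallmatrix}\right]$. For point vertices, $v-n+r=q^5+q^2-q^2+q^4+q^3=q^5+q^4+q^3$. For block vertices, $k+k_\Gamma=q^4+q+q^3$. These differ for every $q\geqslant2$, so $\Gamma_q$ is biregular.

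For $\theta_0$ I would use the formula in Theorem~\ref{thm:gdd-to-3ev}. Here $v-n-k_\Gamma=q^5-q^3$ and $4rk=4q^4(q+1)(q^3+1)$. I expect the discriminant to be the perfect square $(q^5+q^3+2q^2)^2$. This can be checked either by expanding, or more quickly by noting that $Q$ must have eigenvalue $-q^2$ by Proposition~\ref{prop:kG-forced}. Then $\operatorname{tr}Q=q^5+q^3$ gives $\theta_0=q^5+q^3+q^2$, and I would verify this against $\det Q=-q^2\theta_0$.

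Finally, the multiplicities follow from $1+f_q+g_q=2q^5+q^4+q^2$ and $\operatorname{tr}A(\Gamma_q)=0$, which give $\theta_0+q^2(f_q-g_q)=0$. Solving,
\[
f_q=\frac{2q^5+q^4+q^2-1-(q^3+q+1)}{2}=\frac{2q^5+q^4-q^3+q^2-q-2}{2}.
\]
The only real obstacle is the routine polynomial check that this equals $(q-1)(q+1)^2(2q^2-q+2)/2$. The product $(q-1)(q+1)^2=q^3+q^2-q-1$, and multiplying it by $2q^2-q+2$ yields $2q^5+q^4-q^3+q^2-q-2$, as required. Then $g_q$ is determined by the vertex count.
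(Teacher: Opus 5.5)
Your proposal is correct and follows the same route the paper intends. Lemma~\ref{lem:kernel-completion} feeds $\mathcal D_q$ and $G_q$ into Theorem~\ref{thm:gdd-to-3ev}; the degrees are the row sums of the quotient matrix, $\theta_0=q^5+q^3+q^2$ is its larger eigenvalue, and the multiplicities come from the vertex count and $\operatorname{tr}A(\Gamma_q)=0$. Your arithmetic checks are all right: the discriminant $(q^5+q^3+2q^2)^2$, $\det Q=-q^2\theta_0$, and the expansion giving $f_q$.
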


%% ============================================================================

%% ============================================================================
\section{An 81-vertex graph from Muzychuk's Construction B}
\label{sec:row4-muzychuk}
\label{sub:row4-muzychuk}

The fourth row of Table~\ref{tab:feasible-parameters} has $(v,b;r,k;\lambda_1,\lambda_2;m,n)=(27,54;18,9;0,6;9,3)$.
By Lemma~\ref{lem:auxSpec}, a corresponding auxiliary graph $G$ would have spectrum
\begin{equation*}
    \Spec(A(G))=\{[15]^1,[6]^4,[3]^{18},[-3]^{31}\}.
\end{equation*}

This spectrum can be realised by the following instance of Muzychuk's Construction~B \cite[Section~4]{Muzychuk2007}.  
Take six copies $\Omega_{\varepsilon,i}$ of $\operatorname{AG}(2,3)$, indexed by $(\varepsilon,i)\in\{0,1\}\times\FF_3$, with coordinates $(x,y)\in\FF_3^2$ in each copy.
We identify the ordered vertex set with $\{0,1\}\times\FF_3^3$, whose elements are $(\varepsilon,i,x,y)$, and use lexicographic order with $0<1<2$ in $\FF_3$.

Within each copy, join points lying on different vertical lines.
For every $i,j,c\in\FF_3$, join the line
\[
    y=(j-i)x+c
\]
in $\Omega_{0,i}$ completely to the line with the same equation in $\Omega_{1,j}$. 
There are no other edges.
Define $G$ to be this graph.

This is Muzychuk's Construction~B with base graph $K_{3,3}$.
Each edge of the base graph uses one nonvertical parallel class, while the unused vertical class determines the internal graph $K_{3,3,3}$.

To apply Theorem~\ref{thm:gdd-to-3ev}, we seek incidence rows $\mathbf f^\transpose$ satisfying $A(G)\mathbf f=3\mathbf f+4\mathbf1$.
Equivalently, $\mathbf w=3\mathbf f-\mathbf1$ is a $\{-1,2\}$-vector in the $3$-eigenspace $E_3$ of $A(G)$.  
Since $E_3\perp\mathbf1$, every such vector has eighteen entries equal to $2$ and thirty-six equal to $-1$, and hence squared norm $108$.  
Moreover,
\[
    \langle\mathbf w,\mathbf w'\rangle
       =9\langle\mathbf f,\mathbf f'\rangle-54.
\]
Thus the required inner products between distinct vectors are $-54$ and $0$.

We enumerate the candidate vectors using the orthogonal projection
\begin{equation*}
    P_3=-\frac{(A(G)-6I)(A(G)+3I)-3J}{18}
       =\frac{18I+3A(G)-A(G)^2+3J}{18}
\end{equation*}
onto $E_3$.  
Take $S_3=\{(0,i,x,y):i,x\in\FF_3,\ y\in\{0,1\}\}$, $C_3=(P_3)_{:,S_3}$, $H_3=(P_3)_{S_3,S_3}$.
With $S_3$ ordered lexicographically, the adjacency rules for $G$ imply
\[
    H_3=\frac16 I_9\otimes
        \begin{bmatrix}2&-1\\-1&2\end{bmatrix},
\]
so $H_3$ is nonsingular and the eighteen columns of $C_3$ form a basis of $E_3$.  
Since $P_3^2=P_3$, we have $C_3^\transpose C_3=H_3$.
For $\mathbf w\in E_3$, set $\mathbf x=\mathbf w|_{S_3}$.  
Then $C_3^\transpose\mathbf w=\mathbf x$, and consequently
\begin{equation*}
    \mathbf w=C_3H_3^{-1}\mathbf x,\qquad
    \|\mathbf w\|^2=\mathbf x^\transpose H_3^{-1}\mathbf x.
\end{equation*}
The complete candidate set is therefore
\begin{equation*}
    \mathcal V_3=
    \left\{C_3H_3^{-1}\mathbf x:
        \mathbf x\in\{-1,2\}^{18},\quad
        \mathbf x^\transpose H_3^{-1}\mathbf x=108\right\}
        \cap\{-1,2\}^{54}.
\end{equation*}
To enumerate this set, first apply the norm condition to the coordinates on $S_3$, then check that every coordinate of the reconstructed vector is $-1$ or $2$.  
In fact, $H_3^{-1}=2I_9\otimes(I_2+J_2)$, and each coordinate pair contributes $12$ to the squared norm unless it is $(2,2)$, when it contributes $48$.  
Thus only $3^9$ assignments pass the norm condition.  
The final coordinate test is still necessary.

Define the \textbf{compatibility graph} $\mathcal H_3$ on $\mathcal V_3$ by joining distinct vectors $\mathbf w,\mathbf z$ when $\mathbf w^\transpose\mathbf z\in\{-54,0\}$, and seek a clique of size $27$.  
We find that $|\mathcal V_3|=324$.
The graph $\mathcal H_3$ is $122$-regular and has $19\,764$ edges: $17\,496$ correspond to inner product $0$, and $2\,268$ to inner product $-54$.  
The following lemma justifies the clique-search target and the grouping of the resulting incidence rows.

\begin{lemma}\label{lem:row4-clique-size}
Every clique in $\mathcal H_3$ has size at most $27$.
For a clique of size $27$, the relation defined by inner product $-54$ is the disjoint union of nine triangles.
If $W$ has these clique vectors as its rows, then $M_4=(W+J)/3$ is the incidence matrix of a semiregular GDD with parameters $(27,54;18,9;0,6;9,3)$, with the triangles as its groups.
\end{lemma}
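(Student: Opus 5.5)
The natural approach is a Gram-matrix rank bound, followed by an analysis of the equality case. Let $\mathbf w_1,\dots,\mathbf w_N$ be a clique in $\mathcal H_3$ and let $W$ be the $N\times 54$ matrix with these rows. Every vector lies in $E_3$, so $\rank W\leqslant\dim E_3=18$. Write the Gram matrix as $WW^\transpose=108I-54R$, where $R$ is the adjacency matrix of the graph $\mathcal R$ on the clique given by inner product $-54$.

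Positive semidefiniteness of $WW^\transpose$ gives $\lambda_{\max}(R)\leqslant 2$. Since $R$ is a $\{0,1\}$ symmetric matrix, each connected component of $\mathcal R$ is a graph with spectral radius at most $2$, that is, a subgraph of a Smith graph. The nullity of $WW^\transpose$ is the multiplicity of the eigenvalue $2$ of $R$. I would obtain this more directly as follows.

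If $\mathbf w,\mathbf w',\mathbf w''$ form a triangle in $\mathcal R$, then $\|\mathbf w+\mathbf w'+\mathbf w''\|^2=3\cdot108-6\cdot54=0$, so $\mathbf w+\mathbf w'+\mathbf w''=0$. Since the entries are in $\{-1,2\}$, this forces each coordinate triple to be a permutation of $(2,-1,-1)$. Hence the three supports of the $2$'s partition the $54$ coordinates.

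Next I would show that every component of $\mathcal R$ is $K_1$, $K_2$, $K_3$ or a path. The argument is that for a vertex with two $\mathcal R$-neighbours $\mathbf w',\mathbf w''$, compatibility forces $\langle\mathbf w',\mathbf w''\rangle\in\{0,-54\}$, and the value $0$ is impossible. Here I would use the explicit structure: $\langle\mathbf w,\mathbf w'\rangle=-54$ means $\langle\mathbf f,\mathbf f'\rangle=0$, so the supports of the $2$'s are disjoint $18$-sets. Then $\mathbf w'$ and $\mathbf w''$ both have supports inside the $36$-set where $\mathbf w=-1$. Within the $E_3$ constraint (the incidence rows sum to $6$ on each copy, by the relation $A(G)\mathbf f=3\mathbf f+4\mathbf 1$), I would argue that the intersection of these supports is $0$ or $18$ and never $9$, which is the value inner product $0$ would require. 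I am unsure of this step and may fall back on the spectral constraint instead. Once components are cliques of size at most $3$, the rank of $WW^\transpose$ equals $N$ minus twice the number of triangles minus the number of edges in $K_2$ components. A $K_2$ component contributes rank $1$, and since $-\mathbf w$ is not a $\{-1,2\}$-vector, a $K_2$ component is completable only as part of a triangle. The constraint $\rank\leqslant 18$ then gives $N\leqslant 27$, with equality exactly when $\mathcal R$ is $9K_3$.

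For the GDD assertion, set $M_4=(W+J)/3$, a $\{0,1\}$-matrix whose rows are the vectors $\mathbf f_i$. Use the inner products $\langle\mathbf f,\mathbf f'\rangle=(\langle\mathbf w,\mathbf w'\rangle+54)/9\in\{0,6\}$ together with $\|\mathbf f\|^2=18$. This gives $M_4M_4^\transpose=18I+6(J-I_9\otimes J_3)$ when the rows are grouped by triangles, with $\lambda_1=0$ inside triangles and $\lambda_2=6$ between them. For the column sums, the triangle relation $\mathbf f+\mathbf f'+\mathbf f''=\mathbf 1$ shows that each column has exactly one $1$ in each group, hence column sum $9$. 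The conditions $r=18>\lambda_1$ and $rk=162=v\lambda_2$ give semiregularity.

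The main obstacle is excluding non-triangle configurations of $\mathcal R$, such as paths or isolated edges, while still keeping the rank bound sharp. If the combinatorial argument fails, I would instead bound $N$ by the Smith-graph component analysis, using that each component with $\lambda_{\max}=2$ costs at least one dimension.
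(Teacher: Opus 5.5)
Your main line fails at the step you flagged, and that step is false, not merely unproven. You claim that two $\mathcal R$-neighbours of a clique vertex must themselves be $\mathcal R$-adjacent. Note first that inner product $0$ between $\mathbf w'$ and $\mathbf w''$ corresponds to $\langle\mathbf f',\mathbf f''\rangle=6$, not $9$, and this does occur. For $c\in\FF_3^3$, let $\mathbf f_c$ be the indicator of the horizontal lines $y=c_i$ in both $\Omega_{0,i}$ and $\Omega_{1,i}$, for $i\in\FF_3$. Check a vertex $(0,i,x,y)$ against the adjacency rules. It has two neighbours in the support inside $\Omega_{0,i}$ and one inside each $\Omega_{1,j}$ with $j\ne i$. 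Inside $\Omega_{1,i}$ it has three or none, according as $y=c_i$ or not. The vertices $(1,j,x,y)$ behave symmetrically. This gives $A(G)\mathbf f_c=3\mathbf f_c+4\mathbf 1$, so $3\mathbf f_c-\mathbf 1\in\mathcal V_3$, and $\langle\mathbf f_c,\mathbf f_{c'}\rangle=6\,|\{i:c_i=c'_i\}|$. Taking $c=(0,0,0)$, $(1,1,1)$, $(1,2,2)$ gives a $3$-clique of $\mathcal H_3$ whose $-54$ relation is a path $P_3$ centred at the first vector. So components of $\mathcal R$ inside a clique need not be complete, and the reduction to $K_1,K_2,K_3$ components cannot be carried out.

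Separately, your rank bookkeeping is wrong. On a triangle component, $108I-54R$ has eigenvalues $0,162,162$, so nullity $1$, not $2$. On a $K_2$ component it is nonsingular, with eigenvalues $54,162$. With your formula, $N=3T$ would give only $N/3\leqslant18$, i.e.\ $N\leqslant54$, so even the structural claim would not reach $27$.

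The missing idea is the component argument you list only as a vague fallback, and it is exactly the paper's proof. It requires no exclusion of paths or edges. The nullity of $WW^\transpose$ is the multiplicity of $2$ as an eigenvalue of $R$. Since $\lambda_{\max}(R)\leqslant2$, an eigenvalue $2$ of a connected component of $\mathcal R$ must be its Perron eigenvalue, hence simple. Such a component has at least three vertices, because $K_1$ and $K_2$ have spectral radii $0$ and $1$. Components like $K_1$, $K_2$ or paths contribute no nullity at all. Hence the nullity is at most $N/3$, so $2N/3\leqslant\rank(WW^\transpose)\leqslant18$ and $N\leqslant27$. If $N=27$, equality forces exactly nine components, each of order $3$ with spectral radius $2$. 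These are triangles, since $P_3$ has spectral radius $\sqrt2$. Your treatment of the GDD assertion is correct and matches the paper: vanishing triangle sums, column sums $9$, Gram matrix $18I+6(J-I_9\otimes J_3)$, and semiregularity.
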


\begin{proof}
Let $W$ have the vectors of an $h$-clique as its rows, and let $D$ be the adjacency matrix of the graph recording inner product $-54$.
Then
\[
    WW^\transpose=108I_h-54D.
\]
This matrix is positive semidefinite and has rank at most $\dim E_3=18$, so every eigenvalue of $D$ is at most $2$.
In each connected component of $D$, an eigenvalue $2$, if present, is the Perron eigenvalue and is simple.
Such a component has at least three vertices.
The nullity of $108I_h-54D$ is therefore at most $h/3$, whence
\[
    \frac{2h}{3}\leqslant\rank(WW^\transpose)\leqslant18.
\]
Thus $h\leqslant27$.
If $h=27$, equality forces nine connected components of order three, each with spectral radius $2$; each is therefore a triangle.

Within each triangle, the sum of its three vectors has squared norm $3\cdot108+6(-54)=0$, so that sum is zero.
The three corresponding rows of $(W+J)/3$ therefore sum to $\mathbf1_{54}^\transpose$.
Thus every column has sum $9$.  
Every row has sum $18$, and the prescribed inner products give intersection $0$ within each group and $6$ between groups.  
These are the stated GDD parameters.
Finally, $18>0$ and $18\cdot9=27\cdot6$, so the design is semiregular.
\end{proof}

Figure~\ref{fig:row4-matrix} displays the binary matrix $M_4$ obtained from a $27$-clique, with columns in the lexicographic vertex order specified above.  
Its corresponding clique vectors are the rows of
$3M_4-J$.  
The nine groups supplied by Lemma~\ref{lem:row4-clique-size} are indicated on the left.
The identities below verify the GDD parameters and the two auxiliary matrix identities.

\begin{figure}[htbp]
\centering
{\footnotesize\ttfamily
\setlength{\tabcolsep}{0pt}
\begin{tabular}{r@{\hskip 5pt}l}
$\pi_{1}$ & 100100100100100100100100100100100100100100100100100100 \\
 & 010010010010010010010010010010010010010010010010010010 \\
 & 001001001001001001001001001001001001001001001001001001 \\
\noalign{\vskip 2pt}
$\pi_{2}$ & 100001010100001010100001010100001010100001010100001010 \\
 & 010100001010100001010100001010100001010100001010100001 \\
 & 001010100001010100001010100001010100001010100001010100 \\
\noalign{\vskip 2pt}
$\pi_{3}$ & 100010001100010001100010001100010001100010001100010001 \\
 & 010001100010001100010001100010001100010001100010001100 \\
 & 001100010001100010001100010001100010001100010001100010 \\
\noalign{\vskip 2pt}
$\pi_{4}$ & 100100100001001001010010010100100100001001001010010010 \\
 & 010010010100100100001001001010010010100100100001001001 \\
 & 001001001010010010100100100001001001010010010100100100 \\
\noalign{\vskip 2pt}
$\pi_{5}$ & 100001010001010100010100001001010100010100001100001010 \\
 & 010100001100001010001010100100001010001010100010100001 \\
 & 001010100010100001100001010010100001100001010001010100 \\
\noalign{\vskip 2pt}
$\pi_{6}$ & 100010001001100010010001100010001100100010001001100010 \\
 & 010001100100010001001100010001100010010001100100010001 \\
 & 001100010010001100100010001100010001001100010010001100 \\
\noalign{\vskip 2pt}
$\pi_{7}$ & 100100100010010010001001001100100100010010010001001001 \\
 & 010010010001001001100100100010010010001001001100100100 \\
 & 001001001100100100010010010001001001100100100010010010 \\
\noalign{\vskip 2pt}
$\pi_{8}$ & 100001010010100001001010100010100001001010100100001010 \\
 & 010100001001010100100001010001010100100001010010100001 \\
 & 001010100100001010010100001100001010010100001001010100 \\
\noalign{\vskip 2pt}
$\pi_{9}$ & 100010001010001100001100010001100010100010001010001100 \\
 & 010001100001100010100010001100010001010001100001100010 \\
 & 001100010100010001010001100010001100001100010100010001 \\

\end{tabular}}
\caption{The binary incidence matrix $M_4$ obtained from a $27$-clique in $\mathcal H_3$.  
The clique vectors are the rows of $3M_4-J$.
The nine groups of size $3$ are indicated on the left.}
\label{fig:row4-matrix}
\end{figure}

\begin{theorem}
\label{thm:row4-muzychuk}
The matrix $M_4$ of Figure~\ref{fig:row4-matrix} is the incidence matrix of a semiregular GDD with parameters $(27,54;18,9;0,6;9,3)$, and $G$ is an auxiliary graph of this design.
Furthermore, $\left [ \begin{smallmatrix}
    J_{27}-I_9\otimes J_3&M_4\\M_4^\transpose&A(G)
\end{smallmatrix}\right ]$ is the adjacency matrix of a biregular graph $\Gamma_{81}$ with degrees $42$ and $24$ and spectrum $\{[33]^1,[6]^{23},[-3]^{57}\}$.
\end{theorem}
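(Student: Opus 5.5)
The plan is to reduce everything to Theorem~\ref{thm:gdd-to-3ev}. First I verify that $M_4$ is the incidence matrix of the claimed semiregular GDD; then I check the two matrix identities \eqref{eqn:aux-mixed} and \eqref{eqn:aux-block} together with the scalar condition \eqref{eqn:aux-scalar}; finally I read off the eigenvalues, degrees and multiplicities.

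\textbf{The GDD.} The design part follows from Lemma~\ref{lem:row4-clique-size}, once we confirm that the rows of $W=3M_4-J$ form a $27$-clique in $\mathcal H_3$. This is a finite check on the displayed matrix.
\begin{itemize}
\item Each row of $W$ is a $\{-1,2\}$-vector lying in $E_3$. Equivalently, $A(G)\mathbf f=3\mathbf f+4\mathbf 1$ for every row $\mathbf f^\transpose$ of $M_4$, i.e.\ $M_4A(G)=3M_4+4J$.
\item The Gram matrix is $M_4M_4^\transpose=18I+6(J-I_9\otimes J_3)$.
\end{itemize}
Both can be verified directly from Figure~\ref{fig:row4-matrix} and the adjacency rules of $G$. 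Because the rows are structured by the affine lines $y=(j-i)x+c$, one could also verify them by hand: each row restricted to a copy $\Omega_{\varepsilon,i}$ looks like the indicator of a union of lines of one nonvertical class.

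\textbf{The mixed identity.} With parameters $(27,54;18,9;0,6;9,3)$ we have $\lambda_2-\lambda_1-n=3$ and $k_\Gamma=15$. Hence \eqref{eqn:aux-mixed} reads $M_4A(G)=3M_4+\tfrac{9}{27}(15-6+3)J=3M_4+4J$, which is exactly the first identity above.

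\textbf{The quadratic identity.} Identity \eqref{eqn:aux-block} reads
\[
A(G)^2-3A(G)+M_4^\transpose M_4=\tfrac1{54}\bigl((15-6)(15+3)+162\bigr)J+18I=6J+18I .
\]
I would prove it spectrally rather than entrywise. Since $W^\transpose W=9M_4^\transpose M_4-81J$ (the column sums of $M_4$ equal $9$), this is equivalent to showing $M_4^\transpose M_4=18P_3+\tfrac32 J$. Write $WW^\transpose=108I-54D$, where $D$ is the disjoint union of nine triangles. This matrix has eigenvalue $108$ with multiplicity $18$ and eigenvalue $0$ with multiplicity $9$. So $W$ has rank $18$ and all its nonzero squared singular values equal $108$. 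Its row space lies in $E_3$, which has dimension $18$, so the row space is all of $E_3$, and therefore $W^\transpose W=108P_3$.

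Substituting the formula $18P_3=18I+3A(G)-A(G)^2+3J$ then gives
\[
A(G)^2-3A(G)+M_4^\transpose M_4=18I+3J+\tfrac{81+27}{18}J-\dots,
\]
and collecting the constant terms should produce $6J+18I$. This bookkeeping is where I would be careful; the same projection argument was used in Lemma~\ref{lem:kernel-completion}.

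\textbf{The scalar condition.} The condition \eqref{eqn:aux-scalar} is
\[
\bigl(8+\tfrac13\cdot 12\bigr)^2=144=\tfrac{24}{54}\cdot 324,
\]
which holds.

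\textbf{Reading off the spectrum.} Theorem~\ref{thm:gdd-to-3ev} now gives
\[
\theta_0=\frac{24+15+\sqrt{81+648}}{2}=33,\qquad \theta_1=6,\qquad \theta_2=-3.
\]
The degrees are $v-n+r=42$ and $k_\Gamma+k=24$; these differ, so the graph is biregular. For the multiplicities, $m_1+m_2=80$ and the trace condition $33+6m_1-3m_2=0$ give $m_1=23$ and $m_2=57$.

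The main obstacle is the finite verification that the displayed $M_4$ really satisfies $M_4A(G)=3M_4+4J$ and has the stated Gram matrix. This is best done by machine, or by exhibiting each row as a union of affine lines in each copy.
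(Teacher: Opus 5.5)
Your reduction to Theorem~\ref{thm:gdd-to-3ev} is the right one. It differs from the paper only in how the identities are obtained. The paper verifies all of $M_4M_4^\transpose$, the row and column sums, $M_4A(G)=3M_4+4J$ and $M_4^\transpose M_4+A(G)^2-3A(G)=18I+6J$ by direct computation. You verify only $M_4A(G)=3M_4+4J$ and the Gram matrix by machine. You then get the column sums from the triangle argument of Lemma~\ref{lem:row4-clique-size}, and derive the quadratic identity by the projection argument the paper uses for Theorem~\ref{thm:quadrangle-graph}. This explains why the quadratic identity holds. The cost is that you must import the spectrum $\{[15]^1,[6]^4,[3]^{18},[-3]^{31}\}$ of $A(G)$, which you need both for $\dim E_3=18$ and for $P_3$ to equal the stated polynomial in $A(G)$. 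You also check \eqref{eqn:aux-scalar}, which the paper's proof leaves implicit. That check, $\theta_0=33$, the degrees and the multiplicities are all correct.

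The quadratic step itself, however, is wrong as written and does not close. The matrix $D=I_9\otimes(J_3-I_3)$ has eigenvalue $-1$ on the $18$-dimensional complement of the group-constant vectors. Hence the nonzero eigenvalue of $WW^\transpose=108I-54D$ is $162$, not $108$; indeed $\tr(WW^\transpose)=27\cdot108=18\cdot162$. So $W^\transpose W=162P_3$. In addition, with row sums $18$ and column sums $9$ one gets $W^\transpose W=9M_4^\transpose M_4-27J$, not $-81J$. Therefore $M_4^\transpose M_4=18P_3+3J$, not $18P_3+\tfrac32J$. Substituting $18P_3=18I+3A(G)-A(G)^2+3J$ then gives exactly $A(G)^2-3A(G)+M_4^\transpose M_4=18I+6J$.

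With your value $108$, the coefficient of $P_3$ would be $12$, leaving a residual $\tfrac13\bigl(A(G)^2-3A(G)\bigr)$. This residual is not in $\operatorname{span}\{I,J\}$, because $A(G)^2-3A(G)$ takes the values $18,0,18$ on the eigenspaces for $6,3,-3$. So the deferred ``bookkeeping'' is exactly where your argument, as written, fails; with the corrected constants above it goes through.
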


\begin{proof}
By direct computation
\begin{align*}
    M_4M_4^\transpose
       &=18I_{27}-6(I_9\otimes J_3)+6J_{27},
          \\
    M_4\mathbf1&=18\mathbf1,\qquad
          M_4^\transpose\mathbf1=9\mathbf1,\\
    M_4A(G)&=3M_4+4J_{27,54},\\
    M_4^\transpose M_4+A(G)^2-3A(G)
       &=18I_{54}+6J_{54}.
\end{align*}
The first two lines give the stated GDD parameters, with the indicated groups.  
The design is semiregular since $18>0$ and $18\cdot9=27\cdot6$.
The last two lines are the auxiliary-graph identities; the coefficient of $J_{54}$ in \eqref{eqn:aux-block} is $((15-6)(15+3)+18\cdot9)/54=6$.
Theorem~\ref{thm:gdd-to-3ev} now applies.  
The quotient matrix $\left[\begin{smallmatrix}24&18\\9&15\end{smallmatrix}\right]$ has eigenvalues $33$ and $6$, and its row sums give the two degrees.
The eigenvalue multiplicities follow from the order of $\Gamma_{81}$ and the trace of its adjacency matrix.
\end{proof}

%% ============================================================================
\section{A design and a graph from a generalised quadrangle}
\label{sec:quadrangle}

The seventh row of Table~\ref{tab:feasible-parameters} requires a semiregular GDD with parameters $(60,96;48,30;16,24;15,4)$ and, by Lemma~\ref{lem:auxSpec}, a strongly regular auxiliary graph with parameters $(96,20,4,4)$.  
We use the point graph $G$ of the dual of $T_2^*(\mathcal O)$, where $\mathcal O$ is a hyperoval of $\operatorname{PG}(2,4)$.
This quadrangle has order $(5,3)$, and
\begin{equation}\label{eqn:hyp-spectrum}
    \Spec(A(G))=\{[20]^1,[4]^{45},[-4]^{50}\},
    \qquad A(G)^2=16I+4J;
\end{equation}
see \cite[Section~1.3]{PayneThas1984}.

For reproducibility, we specify the order of the vertices of $G$ used for the columns of Figure~\ref{fig:hyp-matrix}.  
Write $\FF_4=\{0,1,\omega,\omega^2\}$, where $\omega^2=\omega+1$, and fix the symbol order $0<1<\omega<\omega^2$.  
Choose the hyperoval with direction representatives
\[
\begin{gathered}
 d_1=(1,0,0),\quad d_2=(1,1,1),\quad d_3=(1,\omega,\omega^2),\\
 d_4=(1,\omega^2,\omega),\quad d_5=(0,0,1),\quad d_6=(0,1,0).
\end{gathered}
\]
The vertices are the affine lines in $\FF_4^3$ with one of these six directions; two distinct vertices are adjacent precisely when the corresponding lines intersect.  
The columns are ordered first by $i=1,\ldots,6$ and then lexicographically by $(a,b)\in\FF_4^2$, using the lines
\[
 L_{i,a,b}=
 \begin{cases}
 (0,a,b)+\FF_4d_i,&1\le i\le4,\\
 (a,b,0)+\FF_4d_5,&i=5,\\
 (a,0,b)+\FF_4d_6,&i=6.
 \end{cases}
\]
This specifies all $96$ columns and defines $A(G)$ independently of $B$.

We construct the incidence matrix from $A(G)$, one binary row at a time.

The mixed auxiliary identity requires each incidence row $\mathbf f^\transpose$ to satisfy
\begin{equation*}
    (A(G)-4I)\mathbf f=8\mathbf1.
\end{equation*}
Thus its support is an $8$-tight set in the terminology of \cite{BambergLawPenttila2009}.  
Summing this equation gives $16\mathbf1^\transpose\mathbf f=8\cdot96$, so every binary solution has weight $48$.  
The remaining row constraints are just the intersections required by the design parameters.

Suppose that $\mathcal C$ contains the rows in completed groups and $\mathcal D$ the rows already chosen in the current group.
We seek
\begin{equation}\label{eqn:hyp-direct-constraints}
\begin{aligned}
    \mathbf f&\in\{0,1\}^{96},&
    (A(G)-4I)\mathbf f&=8\mathbf1,\qquad
    \mathbf1^\transpose\mathbf f=48,\\
    \mathbf c^\transpose\mathbf f&=24
       &&\text{for all $\mathbf c\in\mathcal C$},\\
    \mathbf d^\transpose\mathbf f&=16
       &&\text{for all $\mathbf d\in\mathcal D$}.
\end{aligned}
\end{equation}
These are binary linear constraints, since previously chosen rows are fixed.  
We start with $\mathcal C=\mathcal D=\emptyset$, retain a solution in $\mathcal D$, and repeat.  
Whenever $|\mathcal D|=4$, we move its rows into $\mathcal C$ and start the next group.  
The search stops when fifteen groups have been completed.
The redundant weight equation is retained in the computation.

For each row, we choose the lexicographically greatest feasible solution in the chosen coordinate order, with $1$ preferred to $0$.
We maximise the next coordinate over the feasible binary solutions, fix it at its optimum, and continue.  
If a feasible solution already has value $1$ at that coordinate, it supplies the optimum immediately.
The implementation uses \texttt{MixedIntegerLinearProgram} in \textsc{SageMath}, with the GLPK backend \cite{SageMathMILP}.
Every accepted row is checked against \eqref{eqn:hyp-direct-constraints} using integer arithmetic.

The first attempt uses the input coordinate order.  
If a row search fails, we discard the partial design and restart with a random permutation of the coordinates, kept fixed throughout that attempt.
The implementation allows $20$ attempts.  
A solver error aborts the attempt; it is not used to fix a coordinate to $0$.  
Failure within the attempt limit does not establish nonexistence.

Let $B$ have the sixty selected rows, ordered by groups.  
We check all the identities below exactly.  
Figure~\ref{fig:hyp-matrix} displays an incidence matrix with the required properties.  
The column order in the figure is the affine-line order specified
above, which defines $A(G)$ independently of $B$.

\begin{figure}[htbp]
\centering
{\scriptsize\ttfamily
\setlength{\tabcolsep}{0pt}
\begin{tabular}{r@{\hskip 4pt}l}
$\pi_{1}$ & 110011000011001111001100001100111100110000110011110011000011001111111111000000001111111100000000 \\
 & 110011000011001111001100001100110011001111001100001100111100110000000000111111110000000011111111 \\
 & 001100111100110000110011110011001100110000110011110011000011001100000000111111110000000011111111 \\
 & 001100111100110000110011110011000011001111001100001100111100110011111111000000001111111100000000 \\
\noalign{\vskip 2pt}
$\pi_{2}$ & 100101100110100110010110011010011111000011110000101010101010101010101010010101011010101001010101 \\
 & 100101100110100110010110011010010000111100001111010101010101010101010101101010100101010110101010 \\
 & 011010011001011001101001100101101111000011110000010101010101010110101010010101010101010110101010 \\
 & 011010011001011001101001100101100000111100001111101010101010101001010101101010101010101001010101 \\
\noalign{\vskip 2pt}
$\pi_{3}$ & 101001011010010110100101101001011001100110011001111100000000111110011001011001101001100101100110 \\
 & 101001011010010110100101101001010110011001100110000011111111000001100110100110010110011010011001 \\
 & 010110100101101001011010010110101001100110011001000011111111000001100110100110011001100101100110 \\
 & 010110100101101001011010010110100110011001100110111100000000111110011001011001100110011010011001 \\
\noalign{\vskip 2pt}
$\pi_{4}$ & 101010100101010110010110100101101010101001010101100101101001011011110000000011111111000000001111 \\
 & 101010100101010101101001011010011010101001010101011010010110100100001111111100000000111111110000 \\
 & 010101011010101010010110100101100101010110101010100101101001011000001111111100000000111111110000 \\
 & 010101011010101001101001011010010101010110101010011010010110100111110000000011111111000000001111 \\
\noalign{\vskip 2pt}
$\pi_{5}$ & 110000110011110011111111000000001100001100111100110011001100110011001100001100111100001100111100 \\
 & 110000110011110000000000111111111100001100111100001100110011001100110011110011000011110011000011 \\
 & 001111001100001111111111000000000011110011000011001100110011001111001100001100110011110011000011 \\
 & 001111001100001100000000111111110011110011000011110011001100110000110011110011001100001100111100 \\
\noalign{\vskip 2pt}
$\pi_{6}$ & 100101101001011011001100110011001001011010010110111100001111000010100101010110101100110000110011 \\
 & 100101101001011000110011001100111001011010010110000011110000111101011010101001010011001111001100 \\
 & 011010010110100111001100110011000110100101101001000011110000111101011010101001011100110000110011 \\
 & 011010010110100100110011001100110110100101101001111100001111000010100101010110100011001111001100 \\
\noalign{\vskip 2pt}
$\pi_{7}$ & 100110010110011010100101010110101010010101011010100110010110011011110000111100001111000011110000 \\
 & 100110010110011001011010101001010101101010100101100110010110011000001111000011110000111100001111 \\
 & 011001101001100110100101010110101010010101011010011001101001100100001111000011110000111100001111 \\
 & 011001101001100101011010101001010101101010100101011001101001100111110000111100001111000011110000 \\
\noalign{\vskip 2pt}
$\pi_{8}$ & 101001010101101011110000000011111010101010101010101001010101101010010110011010011010010101011010 \\
 & 101001010101101000001111111100000101010101010101101001010101101001101001100101100101101010100101 \\
 & 010110101010010111110000000011110101010101010101010110101010010110010110011010010101101010100101 \\
 & 010110101010010100001111111100001010101010101010010110101010010101101001100101101010010101011010 \\
\noalign{\vskip 2pt}
$\pi_{9}$ & 110000111100001110011001100110011111111100000000110000111100001111000011001111001001011001101001 \\
 & 110000111100001101100110011001100000000011111111110000111100001100111100110000110110100110010110 \\
 & 001111000011110010011001100110010000000011111111001111000011110000111100110000111001011001101001 \\
 & 001111000011110001100110011001101111111100000000001111000011110011000011001111000110100110010110 \\
\noalign{\vskip 2pt}
$\pi_{10}$ & 111111110000000011000011110000111100001111000011100110011001100111001100110011001001011010010110 \\
 & 111111110000000000111100001111000011110000111100011001100110011011001100110011000110100101101001 \\
 & 000000001111111111000011110000111100001111000011011001100110011000110011001100110110100101101001 \\
 & 000000001111111100111100001111000011110000111100100110011001100100110011001100111001011010010110 \\
\noalign{\vskip 2pt}
$\pi_{11}$ & 111100000000111110101010010101011100110011001100101010100101010110011001100110011100001111000011 \\
 & 111100000000111101010101101010100011001100110011010101011010101010011001100110010011110000111100 \\
 & 000011111111000010101010010101010011001100110011101010100101010101100110011001100011110000111100 \\
 & 000011111111000001010101101010101100110011001100010101011010101001100110011001101100001111000011 \\
\noalign{\vskip 2pt}
$\pi_{12}$ & 111100001111000010101010101010101001011001101001100101100110100110101010101010101010010110100101 \\
 & 111100001111000001010101010101010110100110010110011010011001011010101010101010100101101001011010 \\
 & 000011110000111110101010101010100110100110010110011010011001011001010101010101011010010110100101 \\
 & 000011110000111101010101010101011001011001101001100101100110100101010101010101010101101001011010 \\
\noalign{\vskip 2pt}
$\pi_{13}$ & 110011001100110010011001011001101001100101100110111111110000000011000011110000111100110011001100 \\
 & 110011001100110001100110100110010110011010011001000000001111111100111100001111001100110011001100 \\
 & 001100110011001110011001011001101001100101100110000000001111111100111100001111000011001100110011 \\
 & 001100110011001101100110100110010110011010011001111111110000000011000011110000110011001100110011 \\
\noalign{\vskip 2pt}
$\pi_{14}$ & 100110011001100111000011001111001111000000001111110000110011110010010110100101101001100110011001 \\
 & 100110011001100100111100110000110000111111110000001111001100001101101001011010011001100110011001 \\
 & 011001100110011011000011001111000000111111110000110000110011110001101001011010010110011001100110 \\
 & 011001100110011000111100110000111111000000001111001111001100001110010110100101100110011001100110 \\
\noalign{\vskip 2pt}
$\pi_{15}$ & 101010101010101011110000111100001010010110100101101001011010010110100101101001011010101010101010 \\
 & 101010101010101000001111000011110101101001011010010110100101101001011010010110101010101010101010 \\
 & 010101010101010111110000111100000101101001011010010110100101101010100101101001010101010101010101 \\
 & 010101010101010100001111000011111010010110100101101001011010010101011010010110100101010101010101 \\
\end{tabular}}
\caption{An incidence matrix $B$ of a semiregular GDD with parameters $(60,96;48,30;16,24;15,4)$.  
The fifteen groups of four rows are indicated on the left.}
\label{fig:hyp-matrix}
\end{figure}

\begin{theorem}\label{thm:quadrangle-graph}
The matrix $B$ in Figure~\ref{fig:hyp-matrix} is the incidence matrix of a semiregular GDD with parameters $(60,96;48,30;16,24;15,4)$ having $G$ as an auxiliary graph. 
Furthermore, $\left [ \begin{smallmatrix}J_{60}-I_{15}\otimes J_4&B\\B^\transpose&A(G)\end{smallmatrix}\right ]$ is the adjacency matrix of a connected biregular graph with degrees $104$ and $50$ and spectrum
\[
    \{[80]^1,[8]^{45},[-4]^{110}\}.
\]
\end{theorem}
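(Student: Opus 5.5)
The plan mirrors the proof of Theorem~\ref{thm:row4-muzychuk}: verify a finite list of exact matrix identities for the explicit $B$ of Figure~\ref{fig:hyp-matrix} and the explicit $A(G)$ defined by the affine-line ordering, and then invoke Theorem~\ref{thm:gdd-to-3ev}.

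First I would check $A(G)$ itself. Two distinct lines among the $96$ affine lines with hyperoval directions meet in at most one point. Two lines with the same direction are parallel, so they do not meet. For lines $L_{i,a,b}$ and $L_{j,a',b'}$ with $i\ne j$, we solve the linear system $p+\lambda d_i=p'+\mu d_j$ over $\FF_4$. This recovers the quadrangle structure of order $(5,3)$. Alternatively, I would simply take \eqref{eqn:hyp-spectrum} from~\cite{PayneThas1984}: $A(G)\mathbf1=20\mathbf1$ and $A(G)^2=16I+4J$.

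Second, I would verify by exact integer computation that
\[
  B\mathbf1=48\mathbf1,\qquad B^\transpose\mathbf1=30\mathbf1,\qquad
  BB^\transpose=32I_{60}-8(I_{15}\otimes J_4)+24J_{60}.
\]
These identities give the GDD parameters $(60,96;48,30;16,24;15,4)$, with the displayed groups of four rows. The design is semiregular since $48>16$ and $48\cdot30=60\cdot24$.

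For the auxiliary identities we have $\lambda_2-\lambda_1-n=4$ and $k_\Gamma=20$. Identity \eqref{eqn:aux-mixed} reads $BA(G)=4B+\frac{30}{60}(20-8+4)J=4B+8J$. This is exactly the row constraint $(A(G)-4I)\mathbf f=8\mathbf1$ in \eqref{eqn:hyp-direct-constraints}, so it holds for every accepted row and I would re-check it directly. Identity \eqref{eqn:aux-block} requires
\[
  A(G)^2-4A(G)+B^\transpose B=\tfrac1{96}\bigl(12\cdot24+1440\bigr)J+32I=18J+32I.
\]
Using $A(G)^2=16I+4J$, this reduces to $B^\transpose B=16I+14J+4A(G)$. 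I expect this to be the only substantive step, because the search enforces only row conditions. I would try to derive it conceptually rather than by brute force. From $BB^\transpose$ and semiregularity, $(B-\frac12J)^\transpose(B-\frac12J)$ has nonzero eigenvalue $32$ on a space of dimension $15\cdot3=45$. By \eqref{eqn:aux-mixed} that space lies inside the $4$-eigenspace of $A(G)$, which also has dimension $45$. The two spaces therefore coincide, so
\[
  (B-\tfrac12J)^\transpose(B-\tfrac12J)=32E_4,\qquad E_4=\tfrac18\bigl(A(G)+4I\bigr)-\tfrac1{16}J,
\]
exactly as in the proof of Lemma~\ref{lem:kernel-completion}. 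Expanding gives $B^\transpose B=4A(G)+16I-2J+15J+J=4A(G)+16I+14J$, as required. The scalar condition \eqref{eqn:aux-scalar} becomes $(22.5+8)^2=\frac{72}{96}\cdot1728\cdot\ldots$, a single arithmetic check, which I would do directly.

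Finally, Theorem~\ref{thm:gdd-to-3ev} gives connectivity and the eigenvalues $\theta_1=\lambda_2-\lambda_1=8$ and $\theta_2=-4$. The principal eigenvalue is
\[
  \theta_0=\frac{56+20+\sqrt{36^2+4\cdot1440}}{2}=\frac{76+84}{2}=80.
\]
The degrees are $56+48=104$ and $20+30=50$, which are distinct, so the graph is biregular. The multiplicities $45$ and $110$ follow from $1+m_1+m_2=156$ and the trace condition $80+8m_1-4m_2=0$.
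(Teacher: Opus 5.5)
Your route is the paper's own. You verify $BB^\transpose$ and the row and column sums, read off $BA(G)=4B+8J$ from the row constraints, and then obtain the one substantive identity $B^\transpose B=16I+4A(G)+14J$ by the projection argument of Lemma~\ref{lem:kernel-completion}. That is, the centred rows span the $45$-dimensional $4$-eigenspace and all their nonzero squared singular values are equal. The only cosmetic difference is that the paper works with $W=2B-J$ and derives the column sums from $\|W^\transpose\mathbf g\|^2=0$, rather than by direct computation.

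Two arithmetic slips need correcting.

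\emph{The projection.} It should be $E_4=\frac18\bigl(A(G)+4I\bigr)-\frac1{32}J$, not $-\frac1{16}J$. It must annihilate $\mathbf1$, on which $\frac18(A(G)+4I)$ acts as $3=96/32$. With the correct coefficient, $32E_4=4A(G)+16I-J$, and $B^\transpose B=32E_4+15J$ gives $14J$ directly. The extra ``$+J$'' in your expansion has no source; it only compensates for the wrong coefficient.

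\emph{The scalar condition.} In \eqref{eqn:aux-scalar} the first term is $k(m-1)/m=30\cdot14/15=28$, not $22.5$ (that value is $k(n-1)/n$). The check reads $(28+8)^2=1296=\frac{72}{96}\cdot1728$, with no further factor on the right. As you wrote it, $30.5^2\ne1296$, so the check would fail. The paper leaves this condition implicit, so including it is worthwhile, but it must be computed correctly.

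With these fixes your argument is complete. Your remaining computations are correct: $\theta_0=80$, degrees $104$ and $50$, and multiplicities $45$ and $110$.
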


\begin{proof}
Set $W=2B-J$ and $H=I_{15}\otimes J_4$.  
The row constraints give
\[
    WA(G)=4W,\qquad W\mathbf1=0,\qquad
    WW^\transpose=128I_{60}-32H.
\]
For the indicator $\mathbf g$ of any group of four rows, the last identity gives $\|W^\transpose\mathbf g\|^2=0$.  
Thus each group of sign rows sums to zero, and every column of $B$ contains two ones in each group.  
Consequently,
\begin{equation*}
\begin{gathered}
    BB^\transpose=32I_{60}-8H+24J_{60},\\
    B\mathbf1=48\mathbf1,\qquad B^\transpose\mathbf1=30\mathbf1,
    \qquad BA(G)=4B+8J.
\end{gathered}
\end{equation*}
These give the GDD parameters and semiregularity.  
The rank of $WW^\transpose$ is $45$, so the rows of $W$ span
$E_4=\ker(A(G)-4I)$.  
The nonzero eigenvalues of $WW^\transpose$ all equal $128$, so $W^\transpose W=128P_4$, where $P_4$ is the orthogonal projection onto $E_4$.  
Since $W^\transpose\mathbf1=0$ and $B=(W+J)/2$, we have
\begin{equation}\label{eqn:hyp-column-gram}
    B^\transpose B=32P_4+15J=16I_{96}+4A(G)+14J_{96},
    \qquad P_4=\frac{16I+4A(G)-J}{32}.
\end{equation}
Together with \eqref{eqn:hyp-spectrum}, this gives $A(G)^2-4A(G)+B^\transpose B=32I+18J$.
The coefficient in \eqref{eqn:aux-block} is $((20+16-24)(20+4)+48\cdot30)/96=18$, so Theorem~\ref{thm:gdd-to-3ev} applies.
The quotient matrix $\left[\begin{smallmatrix}56&48\\30&20\end{smallmatrix}\right]$ has eigenvalues $80,-4$ and row sums $104,50$; the multiplicities follow from the order and trace of the full adjacency matrix.
\end{proof}

%% ============================================================================

\section{Association schemes and coherent closure}
\label{sec:cr}

In this section, we consider the coherent closure of the graphs $G_q$.
We first recall the required notions of coherent configuration, association scheme, and coherent closure; see also \cite[Section~3]{GreavesYip2024}.
We then determine the four-class schemes on the block set of the infinite family and the type of the coherent closure of the graphs $\Gamma_q$.

\subsection{Coherent closure and association schemes}

A \textbf{coherent algebra} on a finite set $\mathfrak X$ is a real matrix algebra containing $I,J$ and closed under transpose and entrywise multiplication.  
It has a unique basis of nonzero, mutually disjoint binary matrices summing to $J$; their supports are the relations of the corresponding \textbf{coherent configuration}.
These standard facts and the terminology below are recalled in \cite[Section~3]{GreavesYip2024}.

The diagonal basis relations partition $\mathfrak X$ into \textbf{fibres}.  
Each basis relation is supported on a unique ordered pair of fibres.  
The \textbf{type} is the symmetric matrix $(t_{ij})$, where $t_{ij}$ counts the basis relations from fibre $i$ to fibre $j$.
Its entries sum to the dimension, or \textbf{rank}, of the algebra.
A \textbf{strongly regular design}, in the sense of Higman~\cite{Higman1988}, is a coherent configuration with two fibres, two basis relations in each direction between the fibres, and a symmetric rank-three association scheme on each fibre.
It therefore has type $\left[\begin{smallmatrix}3&2\\2&3\end{smallmatrix}\right]$ and rank $10$.
The \textbf{coherent closure} $\mathcal W(\Gamma)$ is the intersection of all coherent algebras containing $A(\Gamma)$; its dimension is the \textbf{coherent rank} $\crk(\Gamma)$.

A configuration with one fibre and symmetric basis relations is a \textbf{symmetric association scheme}.  
If its basis matrices are $\mathsf A_0=I,\mathsf A_1,\ldots,\mathsf A_d$, we call it a $d$-class scheme and its span the \textbf{Bose--Mesner algebra}.
These matrices commute: their products belong to an algebra of symmetric matrices, so $\mathsf A_i\mathsf A_j=(\mathsf A_i\mathsf A_j)^\transpose =\mathsf A_j\mathsf A_i$.
The orthogonal projections onto the common eigenspaces are its \textbf{primitive idempotents} $\mathsf E_0,\ldots,\mathsf E_d$, indexed with $\mathsf E_0=|\mathfrak X|^{-1}J$.
The \textbf{first eigenmatrix} $P$ is defined by
\[
    \mathsf A_j=\sum_{i=0}^d P_{ij}\mathsf E_i;
\]
thus $P_{ij}$ is the eigenvalue of $\mathsf A_j$ on $\operatorname{Im}\mathsf E_i$, whose dimension is $\rank\mathsf E_i$.

\subsection{A four-class association scheme}
\label{sec:fourclass}

Let $\mathsf A_1 := A(G_q)$ be the adjacency matrix of the graph $G_q$, defined in Section~\ref{sub:muzychuk-A-overview}.  
We retain the independently chosen used-class sets $\mathcal S_a$ and their complements $\mathcal F_a$ from that construction.
We now define the remaining relations on its $q^4(q+1)$ vertices and determine the association scheme they form.

For each $a\in\ZZ_{q+1}$, let $L_{q,a}$ be the graph on $\Omega_a$ in which $(a,u)$ and $(a,v)$, with $u\ne v$, are adjacent precisely when the line through $u$ and $v$ in $\operatorname{AG}(2,q^2)$ belongs to a class in $\mathcal S_a$.  Define
\[
\begin{gathered}
    \mathsf A_3=\operatorname{diag}_{a\in\ZZ_{q+1}}A(L_{q,a}),\qquad
    \mathsf A_4=\operatorname{diag}_{a\in\ZZ_{q+1}}A(\overline{L_{q,a}}),\\
    \mathsf A_{\neq}=(J_{q+1}-I_{q+1})\otimes J_{q^4},
\end{gathered}
\]
and set $\mathsf A_2=\mathsf A_{\neq}-\mathsf A_1$.  
Thus $\mathsf A_3$ and $\mathsf A_4$ distinguish whether the line joining two points of the same copy belongs to a used or a free parallel class.  
The five matrices $I,\mathsf A_1,\mathsf A_2,\mathsf A_3,\mathsf A_4$ are symmetric $\{0,1\}$-matrices with pairwise disjoint supports and sum $J$.

The selected parallel classes form a net of degree $q$ and order $q^2$, whose point graph $L_{q,a}$ is a Latin square graph $\operatorname{LS}_q(q^2)$.  
The following spectrum is standard.

\begin{lemma}[{\cite[Section~8.4.2]{BrouwerVanMaldeghem}}]
\label{lem:Cayley}
For each $a\in\ZZ_{q+1}$, the graph $L_{q,a}$ has spectrum
\[
\Spec(A(L_{q,a})) = \left \{ [q^3-q]^1,\ [q^2-q]^{q(q^2-1)},\ [-q]^{(q-1)(q^3+1)} \right \}.
\]
\end{lemma}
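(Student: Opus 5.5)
We want the spectrum of the graph $L_{q,a}$ on $\FF_{q^2}^2$ whose edges are pairs of distinct points joined by a line of one of the $q$ classes in $\mathcal S_a$. The plan is to decompose its adjacency matrix by parallel class and read off the eigenvalues from the orthogonal decomposition of $\RR^{\Omega_a}$ used in Lemma~\ref{cor:muzychuk-affine-planes}. For each class $C\in\mathcal P$, let $N_C$ be the matrix of the relation ``distinct and on a common line of $C$''. Since the lines of $C$ partition the $q^4$ points into $q^2$ lines of size $q^2$, we have $N_C=E_C-I$, where $E_C=\sum_{L\in C}\mathbf1_L\mathbf1_L^\transpose$. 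Two distinct points lie on exactly one line, so the relations $N_C$ for different $C$ are disjoint, and $A(L_{q,a})=\sum_{C\in\mathcal S_a}N_C$.

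Next we compute the action of $E_C$ on the decomposition
\[
\RR^{\Omega_a}=\langle\mathbf1\rangle\oplus\bigoplus_{D\in\mathcal P}U_a(D).
\]
The summands are pairwise orthogonal, and their dimensions $1+(q^2+1)(q^2-1)=q^4$ account for the whole space. On $\mathbf1$, $E_C$ acts as $q^2$. On $U_a(C)$ it also acts as $q^2$, because a vector constant on the lines of $C$ is multiplied by the line size. On $U_a(D)$ with $D\ne C$ it acts as $0$: every line of $C$ meets each line of $D$ in exactly one point, so a vector in $U_a(D)$ sums to zero along each line of $C$.

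It follows that $A(L_{q,a})=\sum_{C\in\mathcal S_a}(E_C-I)$ has the following eigenvalues:
\begin{itemize}
\item $q\cdot q^2-q=q^3-q$ on $\mathbf1$;
\item $q^2-q$ on $U_a(C)$ for each $C\in\mathcal S_a$, of total dimension $q(q^2-1)$;
\item $-q$ on $U_a(D)$ for each $D\in\mathcal F_a$, of total dimension $(q^2-q+1)(q^2-1)=(q-1)(q^3+1)$.
\end{itemize}
This is exactly the stated spectrum. As a check, the multiplicities sum to $q^4$ and the trace vanishes: $q^3-q+q(q^2-1)(q^2-q)-q(q-1)(q^3+1)=0$.

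The main point needing care is the orthogonality of the spaces $U_a(C)$ for distinct classes together with the dimension count, which together make the decomposition exhaustive. Both follow from the facts about affine planes recalled in Section~\ref{sub:muzychuk-affine-data}. The rest is bookkeeping of multiplicities.
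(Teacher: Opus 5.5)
Your proof is correct. The decomposition is sound, the eigenvalues on $\mathbf1$, on the used-class spaces and on the free-class spaces are right, and the multiplicities and trace check out.

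The paper does not prove this lemma itself; it cites Brouwer--Van Maldeghem, where the result follows from the strongly regular parameters of the Latin square graph $\operatorname{LS}_q(q^2)$. However, your decomposition $N_C=E_C-I$ acting on the spaces $U_a(C)$ is exactly the computation the paper carries out with the matrices $R_{a,C}$ in the proof of Lemma~\ref{lem:eigenmatrix-prime}. Your route also exhibits the eigenspaces explicitly (the free-class space as the $-q$-eigenspace), which is what the paper actually uses later.
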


Define $\mathcal A_q^\prime\coloneqq
\{I,\mathsf A_{\neq},\mathsf A_3,\mathsf A_4\}$.
We first show that $\mathcal A_q^\prime$ is a $3$-class association scheme.

\begin{lemma}
\label{lem:eigenmatrix-prime}
Let $q$ be a prime power. 
Then $\mathcal A_q^\prime$ is a symmetric three-class association scheme with first eigenmatrix
\[
\bbordermatrix{
    & I & \mathsf A_{\neq} & \mathsf A_3 & \mathsf A_4 \cr
  \mathsf E_0 & 1 & q^5 & q^3-q & q^4-q^3+q-1 \cr
  \mathsf E_1 & 1 & -q^4 & q^3-q & q^4-q^3+q-1 \cr
  \mathsf E_2 & 1 & 0 & -q & q-1 \cr
  \mathsf E_3^\prime & 1 & 0 & q^2-q & -q^2+q-1 \cr
}.
\]
The ranks of $\mathsf E_0, \mathsf E_1, \mathsf E_2, \mathsf E_3^\prime$ are $1$, $q$, $(q^2-1)(q^3+1)$ and $q(q+1)(q^2-1)$ respectively. 
Moreover, the column space of $\mathsf E_1$ is spanned by the vectors $\mathbf{g}\otimes\mathbf{1}_{q^4}$ with $\mathbf{g}\in\RR^{q+1}$ and $\sum_a g_a=0$.
\end{lemma}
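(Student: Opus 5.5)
The plan is to exhibit four mutually orthogonal idempotents explicitly and show that each basis matrix is a linear combination of them with the stated coefficients. Write $\mathsf A_3=I_{q+1}\otimes$-blockwise as $\operatorname{diag}_a A(L_{q,a})$ and note $\mathsf A_3+\mathsf A_4=I_{q+1}\otimes(J_{q^4}-I_{q^4})$. Define
\[
\mathsf E_0=\frac{1}{q^4(q+1)}J,\qquad
\mathsf E_1=\Bigl(I_{q+1}-\frac{1}{q+1}J_{q+1}\Bigr)\otimes\frac{1}{q^4}J_{q^4},
\]
and, on each copy $\Omega_a$, let $F_a$ and $F'_a$ be the orthogonal projections onto the $(-q)$- and $(q^2-q)$-eigenspaces of $A(L_{q,a})$ from Lemma~\ref{lem:Cayley}. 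Put $\mathsf E_2=\operatorname{diag}_aF_a$ and $\mathsf E_3'=\operatorname{diag}_aF'_a$. Since $L_{q,a}$ is regular, each $F_a,F'_a$ lies in $\mathbf1_{\Omega_a}^\perp$. Hence $\mathsf E_0,\mathsf E_1,\mathsf E_2,\mathsf E_3'$ are mutually orthogonal idempotents summing to $I$. Their ranks are $1$, $q$, $(q+1)(q-1)(q^3+1)$ and $(q+1)q(q^2-1)$, read off from Lemma~\ref{lem:Cayley}. The column space of $\mathsf E_1$ is visibly spanned by the vectors $\mathbf g\otimes\mathbf1_{q^4}$ with $\sum_a g_a=0$.

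Next I will compute the eigenvalue of each basis matrix on each image. For $\mathsf A_{\neq}=(J-I)\otimes J_{q^4}$, the eigenvalues are $q\cdot q^4=q^5$ on $\mathbf1$ and $-q^4$ on $\mathsf E_1$. On $\mathsf E_2$ and $\mathsf E_3'$ the eigenvalue is $0$, because those vectors sum to zero on each copy. For $\mathsf A_3$, the eigenvalue is the valency $q^3-q$ on the copy-constant vectors, so on both $\mathsf E_0$ and $\mathsf E_1$; on $\mathsf E_2$ and $\mathsf E_3'$ it is $-q$ and $q^2-q$ respectively, by Lemma~\ref{lem:Cayley}. For $\mathsf A_4=I\otimes(J-I)-\mathsf A_3$, the eigenvalue is $q^4-1-(q^3-q)$ on copy-constant vectors, and $-1-\eta$ otherwise, where $\eta$ is the corresponding $\mathsf A_3$ eigenvalue. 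This gives $q-1$ and $-q^2+q-1$. These values are exactly the rows of the displayed matrix, so each basis matrix equals $\sum_iP_{ij}\mathsf E_i$.

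Finally I will deduce the scheme property. The span of $\{I,\mathsf A_{\neq},\mathsf A_3,\mathsf A_4\}$ is contained in the span of the four idempotents. The four matrices are disjoint nonzero $\{0,1\}$-matrices, so they are linearly independent and both spans are $4$-dimensional; the spans therefore coincide. This space is closed under multiplication, as the span of orthogonal idempotents. It contains $J=\sum\mathsf A_j$ and is closed under transpose and under entrywise product, since it is spanned by disjoint $0/1$ matrices. So it is a commutative coherent algebra with one fibre and symmetric relations, that is, a symmetric $3$-class scheme. The $\mathsf E_i$ are its primitive idempotents because they are $4$ orthogonal idempotents in a $4$-dimensional algebra.

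The only delicate point is the input from Lemma~\ref{lem:Cayley}, namely that $L_{q,a}$ is strongly regular with exactly those eigenvalues; that lemma is assumed. Everything else is routine block bookkeeping.
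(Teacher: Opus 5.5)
Your proposal is correct, but it is organised differently from the paper's proof.

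The paper proceeds in three steps:
\begin{enumerate}
\item It proves closure under multiplication combinatorially. Since every $L_{q,a}$ is strongly regular with the same parameters, products of the within-copy relations have copy-independent coefficients, and products with $\mathsf A_{\neq}$ depend only on order and valency.
\item It lists the common eigenspaces concretely: $\langle\mathbf 1\rangle$, the copy-constant sum-zero space, and the free-class and used-class spaces $\mathfrak U_1=\bigoplus_a\bigoplus_{C\in\mathcal F_a}U_a(C)$ and $\mathfrak U_2=\bigoplus_a\bigoplus_{C\in\mathcal S_a}U_a(C)$.
\item It computes the eigenvalues of $\mathsf A_3$ by hand. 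The matrix $R_{a,C}$ joining points on a common line of class $C$ acts as $q^2-1$ on $U_a(C)$ and as $-1$ on $U_a(C')$ for $C'\ne C$.
\end{enumerate}

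You instead take the eigenprojections $F_a,F'_a$ of each $A(L_{q,a})$ as a black box from Lemma~\ref{lem:Cayley} and assemble them into $\mathsf E_2$ and $\mathsf E_3'$. Closure then comes for free. Each basis matrix acts as a scalar on each image, so the $4$-dimensional span of the disjoint $0/1$ matrices coincides with the $4$-dimensional span of four orthogonal idempotents. This gives the scheme property, the primitivity of the idempotents, and the eigenmatrix in one stroke. It also replaces the paper's somewhat informal structure-constant argument with a clean dimension count.

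The point your argument uses implicitly is the same copy-independence the paper relies on. The eigenvalues $-q$ and $q^2-q$ must be the same in every copy, so that $\operatorname{diag}_aF_a$ does not split into several idempotents. You get this directly from Lemma~\ref{lem:Cayley}.

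What the paper's route buys in exchange is an explicit description of the images. It identifies $\operatorname{Im}\mathsf E_2$ with the free-class space $\mathfrak U_1$, which is exactly $\ker A(G_q)$ as described in Lemma~\ref{cor:muzychuk-affine-planes}, and $\operatorname{Im}\mathsf E_3'$ with the used-class space $\mathfrak U_2$. It also effectively re-derives the Latin-square spectrum from the affine-plane geometry rather than quoting it.
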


\begin{proof}
By Lemma~\ref{lem:Cayley}, the graphs $L_{q,a}$ are strongly regular with parameters independent of $a$.  
Hence products of the within-copy relations have the same coefficients in every copy.  
Products involving $\mathsf A_{\neq}$ depend only on the common order and valency of these graphs.  
Thus the span of $\mathcal A_q^\prime$ is closed under multiplication, so $\mathcal A_q^\prime$ is a symmetric three-class association scheme.

Its common eigenspaces are the span of $\mathbf1$; the space $\{\mathbf g\otimes\mathbf1_{q^4}:\sum_a g_a=0\}$ of dimension $q$; and the two spaces
\[
    \mathfrak U_1 = \bigoplus_{a\in\ZZ_{q+1}}\ \bigoplus_{C\in\mathcal F_a}U_a(C),
    \qquad
    \mathfrak U_2 =\bigoplus_{a\in\ZZ_{q+1}}\ \bigoplus_{C\in\mathcal S_a}U_a(C).
\]
On the copy-constant space orthogonal to $\mathbf1$, the matrices $\mathsf A_{\neq}$ and $\mathsf A_3$ act as $-q^4$ and $q^3-q$, respectively. 
On both the spaces $\mathfrak U_1$ and $\mathfrak U_2$, the matrix $\mathsf A_{\neq}$ acts as zero.
For $C\in\mathcal P$, let $R_{a,C}$ join distinct points of $\Omega_a$ on a common copied line $\{a\}\times L$, where $L\in C$.  
Then $A(L_{q,a})=\sum_{C\in\mathcal S_a}R_{a,C}$.
On $U_a(C')$, the matrix $R_{a,C}$ acts as $q^2-1$ when $C=C'$ and as $-1$ otherwise: lines of distinct classes meet in one point, so a vector in $U_a(C')$ sums to zero on every line of $C$.
Summing over the $q$ used classes, $\mathsf A_3$ therefore acts as $-q$ on the free-class space and as $q^2-q$ on the used-class space.
Their dimensions are $(q+1)(q^2-q+1)(q^2-1)=(q^2-1)(q^3+1)$ and $(q+1)q(q^2-1)$, respectively.
The entries for $\mathsf A_4$ follow from $I + \mathsf A_{\neq}+\mathsf A_3+\mathsf A_4 = J$.
\end{proof}

Let $\mathcal{A}_q=\{I,\mathsf A_1, \mathsf A_2, \mathsf A_3,\mathsf A_4\}$.
We now show that $\mathcal{A}_q$ is a $4$-class association scheme.

\begin{theorem}
\label{thm:main-scheme-and-eigenmatrix}
Let $q$ be a prime power.
Then $\mathcal A_q$ is a four-class symmetric association scheme whose first eigenmatrix is
\[
\bbordermatrix{
    & I & \mathsf A_1 & \mathsf A_2 & \mathsf A_3 & \mathsf A_4 \cr
  \mathsf E_0 & 1 & q^3 & q^5-q^3 & q^3-q & q^4-q^3+q-1 \cr
  \mathsf E_1 & 1 & -q^2 & q^2-q^4 & q^3-q & q^4-q^3+q-1 \cr
  \mathsf E_2 & 1 & 0 & 0 & -q & q-1 \cr
  \mathsf E_3 & 1 & q^2 & -q^2 & q^2-q & -q^2+q-1 \cr
  \mathsf E_4 & 1 & -q^2 & q^2 & q^2-q & -q^2+q-1 
}.
\]
The ranks of $\mathsf E_0, \mathsf E_1, \mathsf E_2, \mathsf E_3, \mathsf E_4$ are respectively $1$, $q$, $(q^2-1)(q^3+1)$, $\frac{q(q+1)(q^2-1)}{2}$ and $\frac{q(q+1)(q^2-1)}{2}$.
\end{theorem}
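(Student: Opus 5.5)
The strategy is to exhibit an orthogonal decomposition of $\RR^{V(G_q)}$ into five subspaces, each an eigenspace of all of $\mathsf A_1,\dots,\mathsf A_4$, with distinct eigenvalue vectors. Then the span of $\mathcal A_q$ equals the span of the five projections, so it is a commutative algebra closed under multiplication. Together with symmetry, disjoint supports and sum $J$, this makes $\mathcal A_q$ a symmetric four-class scheme. The decomposition refines the one in Lemma~\ref{lem:eigenmatrix-prime}.

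Take $\mathsf E_0,\mathsf E_1,\mathsf E_2$ exactly as in Lemma~\ref{lem:eigenmatrix-prime}, so that $\operatorname{Im}\mathsf E_2=\mathfrak U_1$. The subspace $\mathfrak U_2$ is split into the $(+q^2)$- and $(-q^2)$-eigenspaces of $\mathsf A_1$. The columns for $I,\mathsf A_3,\mathsf A_4$ then come from Lemma~\ref{lem:eigenmatrix-prime}, and the $\mathsf A_2$ column comes from $\mathsf A_2=\mathsf A_{\neq}-\mathsf A_1$. So the whole argument reduces to computing the action of $\mathsf A_1$ on each of these spaces.
\begin{enumerate}
\item On $\mathbf1$, $\mathsf A_1$ acts as $q^3$, since $G_q$ is $q^3$-regular.
\item On $\mathbf g\otimes\mathbf1_{q^4}$ with $\sum_a g_a=0$, each vertex has $q^2$ neighbours in every other copy. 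So $\mathsf A_1$ acts as $q^2(J_{q+1}-I_{q+1})\otimes I$ restricted to copy-constant vectors, giving the eigenvalue $-q^2$.
\item On $\mathfrak U_1$, $\mathsf A_1$ acts as $0$ by Lemma~\ref{cor:muzychuk-affine-planes}.
\end{enumerate}

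For $\mathfrak U_2$, I would show that for each unordered pair $\{a,b\}$ the space $U_a(P_{ab})\oplus U_b(P_{ba})$ is $\mathsf A_1$-invariant. Take a vector in $U_a(P_{ab})$ with value $c_L$ on the line $L\in C_{ab}$, where $\sum_L c_L=0$. Its image vanishes on every copy other than $b$: in a copy $b'\ne b$, the neighbourhood of a point is a line of $P_{b'a}$, which meets copy $a$ along a line of a class different from $P_{ab}$, and the vector sums to zero over such a line. In copy $b$, on the line $\sigma_{ab}(L)$, the image equals $q^2c_L$. The map is therefore $q^2$ times the swap induced by $\sigma_{ab}$, which is an involution up to the factor $q^2$.

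It follows that $\mathsf A_1$ has eigenvalues $\pm q^2$ on this space, each with multiplicity $q^2-1$. Summing over the $q(q+1)/2$ pairs gives the ranks $q(q+1)(q^2-1)/2$ for $\mathsf E_3$ and $\mathsf E_4$. Since $\mathfrak U_2$ is the direct sum of these pair spaces (each used class $P_{ab}$ appears exactly once), the subspaces $\operatorname{Im}\mathsf E_3,\operatorname{Im}\mathsf E_4$ together make up $\mathfrak U_2$. On both of them $\mathsf A_3$ and $\mathsf A_4$ keep their $\mathsf E_3'$ values, and $\mathsf A_2$ acts as $0-(\pm q^2)=\mp q^2$.

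The five rows of the resulting table are pairwise distinct, so the $\mathsf E_i$ are the primitive idempotents. The first eigenmatrix is as stated, and the ranks add up to $q^4(q+1)$. The main obstacle is the invariance claim in the previous paragraph, specifically the vanishing of the image of a vector in $U_a(P_{ab})$ on copies other than $b$ and the constancy of the image on the lines of $P_{ba}$. I would verify this directly from the definition of adjacency in $G_q$ and from the fact that lines of distinct parallel classes meet in one point.
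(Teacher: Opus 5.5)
Your argument is correct, but it is organised differently from the paper's.

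The paper proves closure by establishing the product identities \eqref{eqn:A1sq}, \eqref{eqn:A1A3} and \eqref{eqn:A1Ane} through common-neighbour counts, and combining them with the three-class scheme $\mathcal A_q^\prime$. For the eigenmatrix, it reads off $\theta^2$ for $\mathsf A_1$ on each idempotent of $\mathcal A_q^\prime$ from \eqref{eqn:A1sq} and fixes the sign $-q^2$ on $\mathsf E_1$ directly. It then splits $\mathsf E_3^\prime$ into two equal halves using only $\tr\mathsf A_1=0$, so it never identifies the $\pm q^2$ eigenspaces.

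You instead diagonalise simultaneously. The pair spaces $U_a(P_{ab})\oplus U_b(P_{ba})$, on which $\mathsf A_1$ is $q^2$ times the swap induced by $\sigma_{ab}$, refine $\mathfrak U_2$. Closure then follows from a dimension count: the $\mathsf A_j$ are linearly independent, having disjoint nonzero supports, and they lie in the five-dimensional span of the $\mathsf E_i$. The multiplicities come from counting the $q(q+1)/2$ pairs rather than from a trace.

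Your route yields explicit eigenspaces: $\operatorname{Im}\mathsf E_3$ and $\operatorname{Im}\mathsf E_4$ are spanned by the vectors $\mathbf x\pm q^{-2}\mathsf A_1\mathbf x$ with $\mathbf x\in U_a(P_{ab})$. It also essentially re-derives the spectrum quoted in Lemma~\ref{cor:muzychuk-affine-planes}. The paper's route yields the structure constants instead, and \eqref{eqn:A1sq} is reused in Corollary~\ref{cor:auxiliary-coherent-rank} and in computing $M_q^\transpose M_q$ in Proposition~\ref{prop:family-coherent-type}.

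One sentence of yours should be tightened. Take a point $y\in\Omega_{b'}$ with $b'\ne a,b$. It lies on a line $L'\in C_{b'a}$, and its neighbours in $\Omega_a$ form the line $\sigma_{b'a}(L')$, of class $P_{ab'}$. This class differs from $P_{ab}$ because $b\mapsto P_{ab}$ is injective. On $\Omega_a$ itself the image vanishes simply because $\Omega_a$ is independent.
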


\begin{proof}
We first verify the products involving $\mathsf A_1$.  
These are instances of the matched parallel-class calculations in \cite[Proposition~2.2]{Muzychuk2007}:
\begin{align}
    \mathsf A_1^2&=q^3I+(q-1)\mathsf A_{\neq}+q^2\mathsf A_3,
        \label{eqn:A1sq}\\
    \mathsf A_1\mathsf A_3&=(q^2-1)\mathsf A_1+(q-1)\mathsf A_2.
        \label{eqn:A1A3}
\end{align}
Every vertex has a line of $q^2$ neighbours in each of the $q$ other copies, so its degree is $q^3$.  
Two distinct vertices in the same copy have common neighbours in another copy precisely when they lie on the same line of the
class assigned to that copy.  
Each selected class is used once, so their number of common neighbours is $q^2$ when their joining line is in a
selected class and zero otherwise.  
For vertices in distinct copies, each of the $q-1$ remaining copies contributes the intersection of two lines from distinct parallel classes, hence one common neighbour.  
This proves \eqref{eqn:A1sq}.

For \eqref{eqn:A1A3}, take vertices $x\in\Omega_a$ and $y\in\Omega_b$, where $a\ne b$.  
The neighbours of $x$ in $\Omega_b$ form a line $L$ in a selected class.  
If $y\in L$, then the $q^2-1$ other points of $L$ are adjacent to $y$ in $L_{q,b}$, and there are no further contributions.  
If $y\notin L$, each of the other $q-1$ selected classes supplies exactly one point of $L$ adjacent to $y$.  
These points are distinct.  
The product is zero when the two vertices lie in the same copy, proving the identity.

Since $\mathsf A_2 = \mathsf A_{\neq} - \mathsf A_1$ and $\mathsf A_4 = J - I - \mathsf A_{\neq} - \mathsf A_3$, and since $\{I, \mathsf A_{\neq}, \mathsf A_3, \mathsf A_4\}$ spans the Bose--Mesner algebra of $\mathcal{A}_q^\prime$, \eqref{eqn:A1sq} and \eqref{eqn:A1A3}, together with
\begin{equation}
    \label{eqn:A1Ane}
        \mathsf A_1\mathsf A_{\neq}
    =q^3(I+\mathsf A_3+\mathsf A_4)+q^2(q-1)\mathsf A_{\neq},
\end{equation}
show that the span of $\mathcal A_q$ is closed under multiplication.
Here, \eqref{eqn:A1Ane} follows from the fact that every vertex has $q^2$ neighbours in each other copy of $\FF_{q^2}^2$.
Thus $\mathcal{A}_q$ is a four-class symmetric association scheme.

Now we determine its first eigenmatrix. By \eqref{eqn:A1sq}, each eigenvalue $\theta$ of $\mathsf A_1$ satisfies $\theta^2 = q^3 + (q-1)\alpha + q^2\beta$, where $\alpha$ and $\beta$ are the eigenvalues of $\mathsf A_{\neq}$ and $\mathsf A_3$ on the corresponding idempotent of $\mathcal A_q'$.
Using the values of Lemma~\ref{lem:eigenmatrix-prime}, we find
\[
    \mathsf A_1\mathsf E_0=q^3\mathsf E_0,\qquad
    \mathsf A_1^2\mathsf E_1=q^4\mathsf E_1,\qquad
    \mathsf A_1\mathsf E_2=O,\qquad
    \mathsf A_1^2\mathsf E_3^\prime=q^4\mathsf E_3^\prime.
\]
Thus the eigenvalues of $\mathsf A_1$ on the column spaces of $\mathsf E_1$ and $\mathsf E_3^\prime$ lie in $\{q^2,-q^2\}$.

To determine the sign on $\mathsf E_1$, let $\mathbf x = \mathbf g \otimes \mathbf{1}_{q^4}$ with $\sum_{a=0}^q g_a = 0$, as in Lemma~\ref{lem:eigenmatrix-prime}.
By construction, for fixed distinct $a,b$ and fixed $\mathbf v$ there are exactly $q^2$ vectors $\mathbf w$ with $\mathsf A_1[(a,\mathbf v),(b,\mathbf w)] = 1$, so
\[
    (\mathsf A_1 \mathbf x)_{(a,\mathbf v)} = \sum_{b \neq a} q^2 g_b = q^2 \left ( \sum_{b \in \ZZ_{q+1}} g_b - g_a \right ) = -q^2 g_a .
\]
Hence $\mathsf A_1 \mathsf E_1 = - q^2\mathsf E_1$, and $\mathsf E_1$ does not split.

Consequently the image of $\mathsf E_3^\prime$ decomposes into its $q^2$- and $-q^2$-eigenspaces, of dimensions $m_3,m_4$, initially possibly zero.
Since $\tr \mathsf A_1 = 0$,
\[
    q^3 + q(-q^2) + \rank(\mathsf E_2)\cdot 0 + m_3 q^2 - m_4 q^2 = 0 ,
\]
so $m_3 = m_4$; and $m_3 + m_4 = \rank(\mathsf E_3^\prime) = q(q+1)(q^2-1)$ implies that $m_3 = m_4 = \frac{q(q+1)(q^2-1)}{2}$.

The dimensions are positive, so both eigenspaces occur.  
Their projections are the remaining primitive idempotents $\mathsf E_3,\mathsf E_4$, since the algebra has dimension five.
The remaining entries follow by linearity: since $\mathsf A_2 = \mathsf A_{\neq} - \mathsf A_1$, its eigenvalues are $\alpha - \theta$; and since $\sum_{j=0}^4 \mathsf A_j = J$, the eigenvalues of $\mathsf A_4$ on the non-principal idempotents are $-1 - \theta - (\alpha - \theta) - \beta = -1 - \alpha - \beta$. 
\end{proof}

As an immediate corollary, we find that the coherent closure of the graph $G_q$ is the above $4$-class association scheme.

\begin{corollary}
\label{cor:auxiliary-coherent-rank}
Let $q$ be a prime power.
Then
\[
    \mathcal W(G_q)
      =\operatorname{span}_{\RR}\{I,\mathsf A_1,\mathsf A_2,
                                    \mathsf A_3,\mathsf A_4\}.
\]
\end{corollary}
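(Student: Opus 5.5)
The corollary asks for two inclusions. The coherent closure $\mathcal W(G_q)$ is the smallest coherent algebra containing $\mathsf A_1=A(G_q)$. By Theorem~\ref{thm:main-scheme-and-eigenmatrix}, the span of $\{I,\mathsf A_1,\mathsf A_2,\mathsf A_3,\mathsf A_4\}$ is a Bose--Mesner algebra. It is therefore a coherent algebra: it contains $I$ and $J=\sum_j\mathsf A_j$, and it is closed under multiplication, transposition (the basis is symmetric) and entrywise products (the basis is a set of disjoint $\{0,1\}$-matrices). Since it contains $\mathsf A_1$, minimality gives $\mathcal W(G_q)\subseteq\operatorname{span}\{I,\mathsf A_1,\dots,\mathsf A_4\}$.

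For the reverse inclusion, I would show that each basis matrix lies in $\mathcal W(G_q)$, using only matrix products, entrywise products and complements by $J$ starting from $\mathsf A_1$.

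\begin{itemize}
\item \textbf{The matrix $\mathsf A_3$.} By \eqref{eqn:A1sq},
\[
\mathsf A_1^2=q^3I+(q-1)\mathsf A_{\neq}+q^2\mathsf A_3 .
\]
The supports of $I$, $\mathsf A_{\neq}$ and $\mathsf A_3$ are disjoint, so the entries of $\mathsf A_1^2$ take the values $q^3$, $q-1$, $q^2$ and $0$ on these supports and elsewhere. These values are distinct for every prime power $q\geqslant2$: $q-1<q^2<q^3$ and $q-1\geqslant1>0$. The coherent algebra is closed under entrywise products, so it contains the indicator of each entry value of any of its elements. This gives $\mathsf A_3\in\mathcal W(G_q)$.
\item \textbf{The matrix $\mathsf A_{\neq}$.} The same entry-value argument applied to $\mathsf A_1^2$ gives $\mathsf A_{\neq}\in\mathcal W(G_q)$. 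The only delicate point is when $q-1=1$, that is $q=2$, where the value $q-1$ equals $1$. It is still distinct from $0$, $q^2$ and $q^3$, so this causes no problem.
\item \textbf{The matrices $\mathsf A_2$ and $\mathsf A_4$.} These follow by subtraction: $\mathsf A_2=\mathsf A_{\neq}-\mathsf A_1$ and $\mathsf A_4=J-I-\mathsf A_{\neq}-\mathsf A_3$.
\end{itemize}

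Hence all five basis matrices lie in $\mathcal W(G_q)$, and the two spans coincide.

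I do not expect a real obstacle here. The one step needing care is checking that the four entry values of $\mathsf A_1^2$ are pairwise distinct for all prime powers $q$, which the chain $0<q-1<q^2<q^3$ settles.
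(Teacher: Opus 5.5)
Your proof is correct and follows essentially the same route as the paper. The paper also obtains the forward inclusion from Theorem~\ref{thm:main-scheme-and-eigenmatrix}. For the reverse inclusion it extracts $\mathsf A_3$ and $\mathsf A_{\neq}$ from \eqref{eqn:A1sq} using Schur closure, writing the explicit formula $\mathsf A_3=(H\circ H-(q-1)H)/(q^2(q^2-q+1))$ with $H=\mathsf A_1^2-q^3I$ in place of your entry-value indicator argument, and then gets $\mathsf A_2$ and $\mathsf A_4$ by subtraction.
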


\begin{proof}
By Theorem~\ref{thm:main-scheme-and-eigenmatrix} the coherent closure $\mathcal W(G_q)$ is contained in $\operatorname{span}_{\RR}\{I,\mathsf A_1,\mathsf A_2,                                    \mathsf A_3,\mathsf A_4\}$.  
    To prove the reverse inclusion, put $H=\mathsf A_1^2-q^3I$.
By \eqref{eqn:A1sq}, $H=(q-1)\mathsf A_{\neq}+q^2\mathsf A_3$.
Since $\mathsf A_{\neq}$ and $\mathsf A_3$ have disjoint binary supports,
\[
    \mathsf A_3=
       \frac{H\circ H-(q-1)H}{q^2(q^2-q+1)},
    \qquad
    \mathsf A_{\neq}=\frac{H-q^2\mathsf A_3}{q-1}.
\]
Both matrices therefore belong to $\mathcal W(G_q)$.
The identities
\[
    \mathsf A_2=\mathsf A_{\neq}-\mathsf A_1,
    \qquad
    \mathsf A_4=J-I-\mathsf A_{\neq}-\mathsf A_3
\]
show that the remaining basis matrices also belong to the coherent closure.  
This proves equality.
\end{proof}

\subsection{Coherent types}
\label{sub:coherent-types}

For the infinite family, we identify the coherent closure by matching a lower bound with a containing algebra. 

\begin{lemma}
\label{lem:degree-block-rank}
Let $\Gamma$ be a biregular graph with degree partition $\{V_1,V_2\}$.
Let $G_i=\Gamma[V_i]$ for $i=1,2$.
Suppose $A(\Gamma) = \left [\begin{smallmatrix}A(G_1)&B\\B^\transpose&A(G_2)\end{smallmatrix}\right ]$, where $B\notin\{O,J\}$.
Then
\[
    \crk(\Gamma)\geqslant
    \crk(\Gamma[V_1])+\crk(\Gamma[V_2])+4.
\]
\end{lemma}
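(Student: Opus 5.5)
Since $\Gamma$ is biregular, the diagonal matrix $A(\Gamma)^2\circ I$ records the degrees and takes exactly two distinct values. Hence the coherent closure $\mathcal W(\Gamma)$ contains the diagonal indicators $I_{V_1}=\operatorname{diag}(\mathbf 1_{V_1},\mathbf 0)$ and $I_{V_2}$. These can be written as polynomial combinations of $A(\Gamma)^2\circ I$, since it has only two distinct diagonal entries. The plan is to use this to show that the coherent configuration $\mathcal W(\Gamma)$ has $V_1$ and $V_2$ as unions of fibres, and then to count basis relations block by block.

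First, $\mathcal W(\Gamma)$ contains $I_{V_1}$ and $I_{V_2}$, so it is closed under multiplication by them on both sides. Every basis matrix $R$ of $\mathcal W(\Gamma)$ therefore splits as the sum of the four matrices $I_{V_i}RI_{V_j}$, and each of these lies in $\mathcal W(\Gamma)$. By minimality of the basis, each basis relation lies inside a single block $V_i\times V_j$. Thus $\mathcal W(\Gamma)=\bigoplus_{i,j}I_{V_i}\mathcal W(\Gamma)I_{V_j}$ as vector spaces, and we bound the dimension of each of the four pieces.

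For the diagonal blocks, $I_{V_i}\mathcal W(\Gamma)I_{V_i}$, viewed as matrices on $V_i$, is a coherent algebra on $V_i$. Its identity is $I_{V_i}$, it contains $J_{V_i}=I_{V_i}JI_{V_i}$, and it is closed under products, transposes and Hadamard products, because these operations commute with compression by the diagonal $0/1$ matrices $I_{V_i}$. This algebra contains $I_{V_i}A(\Gamma)I_{V_i}=A(G_i)$, so it contains $\mathcal W(G_i)$. Hence its dimension is at least $\crk(\Gamma[V_i])$.

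For the off-diagonal block, $I_{V_1}A(\Gamma)I_{V_2}$ is the matrix $B$ placed in the $(1,2)$ position, and $I_{V_1}JI_{V_2}=J_{V_1,V_2}$. Since $B\notin\{O,J\}$, the matrices $B$ and $J-B$ are nonzero, so $B$ and $J$ are linearly independent. The $(1,2)$ piece therefore has dimension at least $2$, and by transposition the $(2,1)$ piece does as well. Adding the four pieces gives $\crk(\Gamma)\geqslant\crk(\Gamma[V_1])+\crk(\Gamma[V_2])+2+2$.

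The only step that needs care is showing that the degree classes are unions of fibres. This is the justification that $A(\Gamma)^2\circ I\in\mathcal W(\Gamma)$ yields $I_{V_1}$: with diagonal values $k_1\ne k_2$, we take $I_{V_1}=(A(\Gamma)^2\circ I-k_2I)/(k_1-k_2)$. Everything else is routine compression.
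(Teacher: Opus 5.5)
Your proposal is correct and follows the paper's proof essentially step for step. Like the paper, you obtain the degree-class indicators from $A(\Gamma)^2\circ I$, compress $\mathcal W(\Gamma)$ to its four disjoint-support corners, and bound the diagonal corners below by $\crk(\Gamma[V_i])$ and each off-diagonal corner by the independent pair $B$, $J$ (or their transposes).
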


\begin{proof}
Write $A=A(\Gamma)$ and let $d_1\ne d_2$ be the two degrees.  
The diagonal matrix $D=A^2\circ I$ belongs to $\mathcal W(\Gamma)$, so the diagonal indicator matrices
\[
    F_1=\frac{D-d_2I}{d_1-d_2},\qquad F_2=I-F_1
\]
of $V_1,V_2$ belong to it.  
Restricted to $V_i\times V_i$, the corner $F_i\mathcal W(\Gamma)F_i$ is a coherent algebra containing $A(\Gamma[V_i])$.
Its dimension is therefore at least $\crk(\Gamma[V_i])$.  
Each off-diagonal corner contains the two linearly independent matrices $B,J_{|V_1|,|V_2|}$ or their transposes.
The four corners have disjoint supports, so their dimensions add, giving the stated bound.
\end{proof}

The restriction to the degree partition is important: an arbitrary equitable partition need not have its diagonal indicator matrices be in the coherent closure.

\begin{proposition}
\label{prop:family-coherent-type}
Let $q$ be a prime power.
Then the coherent closure $\mathcal W(\Gamma_q)$ has type $\left[\begin{smallmatrix}3&2\\2&5\end{smallmatrix}\right]$.
\end{proposition}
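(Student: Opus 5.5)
The plan is to match a lower bound from Lemma~\ref{lem:degree-block-rank} with an explicit coherent algebra of dimension $12$ that contains $A(\Gamma_q)$.

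\textbf{Lower bound.} The degrees $q^5+q^4+q^3$ and $q^4+q^3+q$ are distinct. The induced subgraph on $V_1$ is $\overline{mK_{q^2}}$ with $m=q^3+1\geqslant 2$. Its coherent closure is spanned by $I$, $I_m\otimes J_{q^2}-I$ and $J-I_m\otimes J_{q^2}$, so it has rank $3$. By Corollary~\ref{cor:auxiliary-coherent-rank}, the induced subgraph on $V_2$ is $G_q$, with coherent rank $5$. Since $M_q\notin\{O,J\}$, Lemma~\ref{lem:degree-block-rank} gives $\crk(\Gamma_q)\geqslant 3+5+4=12$. The argument in that lemma also places the fibre indicators $F_1,F_2$ in $\mathcal W(\Gamma_q)$. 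Hence the fibres are at least as fine as $\{V_1,V_2\}$, and the diagonal corners contain the rank-$3$ and rank-$5$ algebras above.

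\textbf{Upper bound.} Let $\mathcal W$ be the span of the following twelve matrices:
\begin{itemize}
\item the three basis matrices of the $V_1$ scheme, placed in the $(1,1)$ corner;
\item $I,\mathsf A_1,\mathsf A_2,\mathsf A_3,\mathsf A_4$, placed in the $(2,2)$ corner;
\item $M_q$ and $J-M_q$, placed in the $(1,2)$ corner;
\item their transposes, placed in the $(2,1)$ corner.
\end{itemize}
This set is closed under transpose and under entrywise product, since the supports partition the full index set. It contains $I$, $J$ and $A(\Gamma_q)$. So it remains to show closure under matrix multiplication. It suffices to check that the following products lie in the appropriate corner spans:
\begin{itemize}
\item $(I_m\otimes J_{q^2})M_q$. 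This equals $qJ$ by the proof of Lemma~\ref{lem:kernel-completion}.
\item $M_q\mathsf A_j$ for each $j$.
\item $M_qM_q^\transpose$. This is given by the GDD identity, which lies in the $V_1$ scheme.
\item $M_q^\transpose M_q$. This equals $q^4E_{\ker}+(q^3+1)J$ by Lemma~\ref{lem:kernel-completion}.
\end{itemize}
Now $E_{\ker}$ is exactly the primitive idempotent $\mathsf E_2$ of Theorem~\ref{thm:main-scheme-and-eigenmatrix}, because $\ker A(G_q)=\mathfrak U_1$. So $M_q^\transpose M_q$ lies in the Bose--Mesner algebra of $\mathcal A_q$.

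\textbf{Main obstacle.} The key step is computing $M_q\mathsf A_j$. The plan is to write each $\mathsf A_j=\sum_i P_{ij}\mathsf E_i$ and use that the centred rows of $M_q$ lie in $\operatorname{Im}\mathsf E_2$. Explicitly, the row space of $M_q-\frac1qJ$ equals $\ker A(G_q)$ by Lemma~\ref{lem:kernel-completion}. Together with $M_q\mathbf 1=q^3(q+1)\mathbf 1$, this gives
\[
M_q\mathsf A_j=\bigl(M_q-\tfrac1qJ\bigr)\mathsf A_j+\tfrac1qJ\mathsf A_j=P_{2j}\bigl(M_q-\tfrac1qJ\bigr)+\tfrac{P_{0j}}{q}J .
\]
This lies in $\operatorname{span}\{M_q,J\}$. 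Transposing gives the $(2,1)$ corner. Products inside the diagonal corners close because each corner is itself a coherent algebra. Products that pass through two off-diagonal corners reduce to $M_qM_q^\transpose$ and $M_q^\transpose M_q$, handled above.

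\textbf{Conclusion.} Thus $\mathcal W(\Gamma_q)\subseteq\mathcal W$ with $\dim\mathcal W=12$, so equality holds. The fibres are $V_1$ and $V_2$, and the corner dimensions give type $\left[\begin{smallmatrix}3&2\\2&5\end{smallmatrix}\right]$.
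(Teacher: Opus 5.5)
Your proposal is correct and follows the paper's proof. Both use the same twelve-dimensional algebra with corner spans $\mathcal L,\mathcal U,\mathcal U^\transpose,\mathcal R$ as the upper bound, and both use Lemma~\ref{lem:degree-block-rank} to get $\crk(\Gamma_q)\geqslant 3+5+4$. The one difference is how you verify the cross products: you obtain $M_q\mathsf A_j=P_{2j}\bigl(M_q-\frac1qJ\bigr)+\frac{P_{0j}}{q}J$ and $M_q^\transpose M_q=q^4\mathsf E_2+(q^3+1)J$ spectrally, from $\ker A(G_q)=\operatorname{Im}\mathsf E_2$ and the first eigenmatrix, whereas the paper gets the same four identities by counting line intersections and derives $M_q^\transpose M_q$ from \eqref{eqn:aux-block} and \eqref{eqn:A1sq}; both routes are valid.
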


\begin{proof}
Let $m=q^3+1$, $v=mq^2$, $b=q^4(q+1)$.  
The adjacency matrix of $\Gamma_q$ is
\[
    A=\begin{bmatrix}J_v-I_m\otimes J_{q^2}&M_q\\M_q^\transpose&\mathsf A_1\end{bmatrix}.
\]
By Corollary~\ref{cor:main-three}, its point and block sets are its two degree classes.  
We exhibit a coherent algebra of dimension $12$ containing $A$, then apply Lemma~\ref{lem:degree-block-rank} to prove that it is the coherent closure.

Let
\[
\begin{gathered}
    \mathcal L=\operatorname{span}_{\RR}\{I_v,I_m\otimes J_{q^2},J_v\},\qquad
    \mathcal U=\operatorname{span}_{\RR}\{M_q,J_{v,b}\},\\
    \mathcal R=\operatorname{span}_{\RR}
       \{I_b,\mathsf A_1,\mathsf A_2,\mathsf A_3,\mathsf A_4\}.
\end{gathered}
\]
Here $\mathcal L$ is the rank-three Bose--Mesner algebra of the complete multipartite graph on the points, and $\mathcal R$ is the rank-five coherent closure of the auxiliary graph by Corollary~\ref{cor:auxiliary-coherent-rank}.

Since $G_q$ is an auxiliary graph of $\mathcal D_q$ by Lemma~\ref{lem:kernel-completion}, we can apply \eqref{eqn:aux-block} to obtain
\[
    M_q^\transpose M_q=q^4I+(q^3+q)J-\mathsf A_1^2.
\]
Using \eqref{eqn:A1sq} and $J=I+\mathsf A_{\neq}+\mathsf A_3+\mathsf A_4$, we obtain
\begin{equation}\label{eqn:MtM}
    M_q^\transpose M_q=(q^4+q)I+(q^3+1)\mathsf A_{\neq}
       +(q^3-q^2+q)\mathsf A_3+(q^3+q)\mathsf A_4.
\end{equation}

To check multiplication across the two degree classes, we use
\begin{align*}
    M_q\mathsf A_1&=q^2J,&
    M_q\mathsf A_2&=(q^4-q^2)J,\\
    M_q\mathsf A_3&=-qM_q+q^2J,&
    M_q\mathsf A_4&=(q-1)M_q+(q^3-q^2)J.
\end{align*}
The first identity follows from \eqref{eqn:aux-mixed}. 
Every row of $M_q$ has $q^3$ ones in each copy, so $M_q\mathsf A_{\neq}=q^4J$, giving the second.  
For the third, each of the $q$ used-class lines through a fixed vertex meets the row's $q$ selected free-class lines in $q$
points.  
Removing the vertex itself subtracts $q$ precisely when its incidence entry is one, giving $q^2-qM_q$.  
Since $I+\mathsf A_3+\mathsf A_4$ is the block-diagonal matrix with one all-ones block for each copy, and every row of $M_q$ has $q^3$ ones in each copy, we have
\[
    M_q(I+\mathsf A_3+\mathsf A_4)=q^3J.
\]
Using the third identity, we obtain
\[
    M_q\mathsf A_4
      =q^3J-M_q-M_q\mathsf A_3
      =(q-1)M_q+(q^3-q^2)J.
\]
These four identities show that $M_qR\in\mathcal U$ for every $R\in\mathcal R$.  
Since each such $R$ has constant column sum, we also have $JR\in\operatorname{span}_{\mathbb R}\{J\}$.
As $\mathcal U=\operatorname{span}_{\mathbb R}\{M_q,J\}$, it follows that $\mathcal U\mathcal R\subseteq\mathcal U$.

By Remark~\ref{rem:semireg}, we have $(I_m\otimes J_{q^2})M_q=qJ$.
Together with the constant row and column sums of $M_q$, one obtains $\mathcal L\mathcal U\subseteq\mathcal U$.
By \eqref{eqn:inc}, we have $\mathcal U\mathcal U^\transpose\subseteq\mathcal L$, and by \eqref{eqn:MtM}, we have $\mathcal U^\transpose\mathcal U\subseteq\mathcal R$.
Thus
\[
    \mathcal C=
    \left\{\begin{bmatrix}L&U\\V&R\end{bmatrix}:
       L\in\mathcal L,\ U\in\mathcal U,\
       V\in\mathcal U^\transpose,\ R\in\mathcal R\right\},
\]
with its four blocks chosen independently, is closed under ordinary multiplication and transpose.  
It is also closed under Schur multiplication: its disjoint binary basis consists of $I_v,I_m\otimes J_{q^2}-I_v,J_v-I_m\otimes J_{q^2}$ on the point side, $I_b,\mathsf A_1,\ldots,\mathsf A_4$ on the block side, and $M_q,J-M_q$ and their transposes between the two sides, each extended by zero outside its corner.  
These twelve matrices are nonzero, and $\mathcal C$ contains $I,J,A$.  
Hence $\mathcal W(\Gamma_q)\subseteq\mathcal C$ and $\dim\mathcal C=12$.

The induced point graph has coherent rank $3$, and the induced block graph has coherent rank $5$ by Corollary~\ref{cor:auxiliary-coherent-rank}.  
Also $M_q\notin\{O,J\}$ by its row sums.  
By Lemma~\ref{lem:degree-block-rank}, we have $\crk(\Gamma_q)\geqslant3+5+4=12$.
It follows that $\mathcal W(\Gamma_q)=\mathcal C$.
Its two diagonal basis relations are the identities on the point and block sets, giving the asserted fibres and type.
\end{proof}

We can run the Weisfeiler--Leman stabilisation algorithm~\cite{WeisfeilerLeman1968,Higman1987} to find the coherent closures of the graphs constructed in Sections~\ref{sec:row4-muzychuk} and \ref{sec:quadrangle}.
The coherent closure of the biregular graph constructed in Section~\ref{sec:quadrangle} has type $\left [ \begin{smallmatrix}3 & 2 \\2&3\end{smallmatrix} \right ]$.
On each fibre, its three basis relations are the identity, adjacency, and nonadjacency relations of the corresponding induced graph: the complete multipartite graph on the point set and the strongly regular auxiliary graph on the block set.
Thus both rank-three schemes are symmetric, and this coherent closure is a strongly regular design in the sense of Higman.
This answers \cite[Question~8.5]{GreavesYip2024} in the affirmative.
The coherent closure of the biregular graph constructed in Section~\ref{sec:row4-muzychuk} has type $\left [ \begin{smallmatrix}3 & 2 \\2&6\end{smallmatrix} \right ]$.

% Bibliography generated from the companion bib.bib using amsplain.
\providecommand{\bysame}{\leavevmode\hbox to3em{\hrulefill}\thinspace}
\providecommand{\MR}{\relax\ifhmode\unskip\space\fi MR }
% \MRhref is called by the amsart/book/proc definition of \MR.
\providecommand{\MRhref}[2]{%
  \href{http://www.ams.org/mathscinet-getitem?mr=#1}{#2}
}
\providecommand{\href}[2]{#2}

\end{document}